\theoremstyle{plain}
\newtheorem{theorem}{Theorem}[section]
\newtheorem{corollary}[theorem]{Corollary}
\newtheorem{lemma}[theorem]{Lemma}
\newtheorem{proposition}[theorem]{Proposition}
\theoremstyle{definition}
\newtheorem{example}[theorem]{Example}
\newtheorem{question}[theorem]{Question}
\newtheorem{question*}{Question}
\newtheorem{remark*}{Remark}
\newtheorem{conjecture*}{Conjecture}
\newtheorem{remark}[theorem]{Remark}
\def\ker#1{\mathrm{ker}(#1)}
\def\lmlt{\mathrm{Inn}}
\def\dis{\mathrm{Trans}}
\def\nCS{n\mathcal{CS}}
\def\hom_P#1{Hom_\mathcal{P}(#1)}
\def\setof#1#2{\{#1\, : \,#2\}}
\def\Z{\mathbb Z}
\def\OC{\mathcal{OS}}
\def\dOC{t\mathcal{OS}}
\def\cg#1{\equiv_\alpha}
\newcommand*\xbar[1]{%
   \hbox{%
     \vbox{%
       \hrule height 0.5pt 
       \kern0.5ex
       \hbox{%
         \kern-0.1em
         \ensuremath{#1}%
         \kern-0.1em
       }%
     }%
   }%
} 
\title{Quandles with orbit series conditions}
\author{M. Bonatto}
\author{A. Crans}
\author{T. Nasybullov}
\author{G. Whitney}
\address[M. Bonatto]{IMAS--CONICET and Universidad de Buenos Aires}
\email{marco.bonatto.87@gmail.com}
\address[A. Crans]{Loyola Marymount University, Los Angeles, USA}
\email{acrans@lmu.edu}
\address[T. Nasybullov]{Sobolev Institute of Mathematics, Novosibirsk, Russia}
\email{ntr@math.nsc.ru}
\address[G. Whitney]{Studio Infinity, Los Angeles, USA}
\email{gwhitney@post.harvard.edu}
\begin{document}
\begin{abstract}
 
We introduce the notion of an {\it orbit series} in a quandle. Using this notion we define four families of quandles based on finiteness conditions on their orbit series.  
Intuitively, the classes $\dOC$ and $\dOC_n$  correspond to finitary compositions of trivial quandles while the classes $\OC$ and $\OC_n$ correspond to finitary compositions of connected quandles. We study properties of these four families of quandles and explore their relationships with several previously studied families of quandles:  reductive, $n$-reductive, locally reductive, $n$-locally reductive, and solvable quandles.

~\\
\textit{Keywords: quandle, orbit series condition, finite type condition,  Engel group, nilpotent group.}

~\\
\textit{Mathematics Subject Classification: 20N02, 16T25, 57M27.}
\end{abstract}
\maketitle

\section{Introduction}
 A quandle is an algebraic structure whose axioms are derived from the Reidemeister moves on oriented link diagrams. They were first introduced by Joyce \cite{Joyce} and Matveev \cite{Matveev} as an invariant for knots in $\mathbb{R}^3$. More precisely,  to each oriented diagram $D_K$ of an oriented knot $K$ in $\mathbb{R}^3$ one can associate the quandle $Q(K)$, which does not change if we apply the Reidemeister moves to the diagram $D_K$. The knot quandle $Q(K)$ can be constructed as the quotient of the free quandle $FQ_n$ on the $n$ arcs of $D_K$ by relations which can be obtained from the crossings of $D_K$. Joyce and Matveev proved that two knot quandles $Q(K_1)$ and $Q(K_2)$ are isomorphic if and only if there is an ambient isotopy that takes $K_1$ to $K_2$ or one that takes $K_1$ to the mirror image of the orientation reversal of $K_2$.  Over the years, quandles have been investigated by various authors in order to construct new invariants for knots and links (see, for example, \cite{Carter, Kamada, Nelson}).

The knot quandle is a very strong invariant for knots in $\mathbb{R}^3$; however, usually it is very difficult to determine if two knot quandles are isomorphic. Sometimes homomorphisms from knot quandles to simpler quandles provide useful information that helps determine whether two knot quandles are isomorphic. This potential utility leads to the necessity of studying broader classes of quandles 
from the algebraic point of view. Algebraic properties of quandles including their automorphisms and residual properties have been investigated, for example, in \cite{BDS, BNS, BSS, BiaBon, Clark, HosSha, JPSZ, NelWon, Nosaka}. A (co)homology theory for quandles and racks has been developed in \cite{Carter2, Fenn1, Fenn2, Nosaka2013} that led to stronger invariants for knots and links.  Many new constructions of quandles have been introduced, for example, in \cite{BarNasSingBiquandle, BarNas, BonCraWhi, Crans-Nelson}.

Quandles also find applications in the study of the set-theoretical Yang-Baxter equation. The Yang-Baxter equation first appeared in theoretical physics and statistical
mechanics in the works of Yang \cite{Yan} and Baxter \cite{Bax1,Bax2}. The notion of a set-theoretical solution of the Yang-Baxter equation was introduced by V.~Drinfel'd in the context of quantum groups (see \cite{Dri}) as a pair $(X, r)$,
where $X$ is a set and $r:X\times X\to X\times X$ is a bijective map such that 
$$(r\times id)(id \times r)(r\times id)=(id\times r)(r \times id)(id\times r).$$
If $Q$ is a quandle with operation $\rhd$, we obtain such a solution from the pair $(Q,r)$, where $r:Q\times Q\to Q\times Q$ is the map given by $r(x,y)=(x\rhd y,x)$
for $x,y\in Q$.

The recent paper \cite{BarNasmult1}  constructed a general method whereby a given solution of the set-theoretical Yang-Baxter equation on an arbitrary algebraic system $X$ can be used for constructing a representation of the virtual braid group $VB_n$ by automorphisms of the algebraic system  $X$. Also \cite{BarNasmult2} introduced a method to construct an invariant for virtual knots and links from a given solution of the set-theoretical Yang-Baxter equation on an algebraic system $X$. When $X$ is a quandle with the corresponding Yang-Baxter solution as above, then it is possible, using procedures described in \cite{BarNasmult1, BarNasmult2}, to construct a representation $VB_n\to {\rm Aut}(X)$ and a quandle invariant for virtual links.  However, in order to apply these procedures to $X$ and $r$, the quandle $X$ must satisfy certain specific conditions described in \cite{BarNasmult1, BarNasmult2}. In this regard, there is a natural problem of constructing specific families of quandles that are convenient to work with. 	

Turning to group theory for inspiration, convenient families of groups include those satisfying certain finite type conditions: (locally/residually) finite groups, (locally/residually) nilpotent groups, (locally/residually) solvable groups, etc. By analogy, it seems fruitful to consider similar families of quandles. This idea was partially implemented in \cite{BonSta} using commutator theory from universal algebra to introduce and study nilpotent and solvable quandles. Other previously studied families of quandles given by finite type conditions are reductive  quandles (see \cite{BonSta2}, and \cite{JPZ yoo} where they are called `multipermutational') and locally reductive quandles (see \cite{PilRom}, where this notion is considered for modes but not specifically named).  

Here we introduce the notion of an {\it orbit series} in a quandle. Using this notion, we define four families of quandles based on finiteness conditions on their orbit series. Intuitively, the classes $\dOC$ and $\dOC_n$  correspond to finitary compositions of trivial quandles, while the classes $\OC$ and $\OC_n$ correspond to finitary compositions of connected quandles. In particular, the class of quasi-trivial quandles discussed in \cite{QTri} corresponds exactly to $\dOC_2$ here. We study properties of these four families of quandles and explore their relationships with several previously studied families of quandles:  reductive, $n$-reductive, locally reductive, $n$-locally reductive, and solvable quandles.

The paper is organized as follows. Section~\ref{prem} summarizes the necessary preliminaries about quandles. In Section~\ref{redqua} we discuss the notions of reductive and locally reductive quandles and derive some of their properties. In Section~\ref{orbitserrrr} we introduce the notion of an orbit series in a quandle, give definitions of the families $\OC$, $\OC_n$, $\dOC$, $\dOC_n$, and provide several examples. Section~\ref{section about properties} is devoted to the study of properties of the families $\OC$, $\OC_n$, $\dOC$, $\dOC_n$ and of the quandles from these families: in Section~\ref{subquandles and so on} we study the behavior of the families $\OC$, $\OC_n$, $\dOC$, $\dOC_n$ under taking subquandles, homomorphic images, and direct products; in Section~\ref{are there connected subquandles?} we examine the subquandle structure of quandles from the families $\OC$, $\OC_n$, $\dOC$, $\dOC_n$; and in Section~\ref{orbmultred} we investigate connections between the families  $\dOC$, $\dOC_n$ and the families of reductive, locally reductive, and solvable quandles. 
\subsection*{Acknowledgements} T.~Nasybullov is supported by the Ministry of Science and Higher Education of the Russian Federation, government program of Sobolev Institute of Mathematics SB RAS, project  0250-2019-0001. Alissa S. ~Crans is supported by a grant from the Simons Foundation (\#360097, Alissa Crans).

\section{Preliminaries}\label{prem}

\subsection{Quandles} A quandle $Q$ is a set equipped with a binary operation $\rhd$ satisfying the following three axioms:
\begin{enumerate}
\item[(i)] $a\rhd a=a$ for all $a\in Q$,
\item[(ii)] for all $a \in Q$, the map $L_a:b\mapsto a\rhd b$ is a bijection of $Q$, and
\item[(iii)] $a\rhd ({b} \rhd c)=(a\rhd b)\rhd(a\rhd c)$ for all $a,b,c\in Q$.
\end{enumerate}
Axioms (ii) and (iii) are equivalent to the condition that each map $L_a:Q\to Q$  is an automorphism. The group ${\rm Inn}(Q)=\left\langle \setof{L_a}{a\in Q}\right\rangle$  generated by all $L_a$ is called the {\it inner automorphism group} of $Q$. The group $\dis{(Q)}=\left\langle \setof{L_a L_b^{-1}}{a,b\in Q}\right\rangle$ is a normal subgroup in ${\rm Inn}(Q)$ called the {\it transvection group} of $Q$ (sometimes called the displacement group of $Q$). Both groups ${\rm Inn}(Q)$ and $\dis{(Q)}$ act on $Q$. The orbit of an element $a\in Q$ under the action of ${\rm Inn}(Q)$ coincides with the orbit of $a$ under the action of $\dis{(Q)}$. This orbit is denoted by ${\rm Orb}(a,Q)$ and is called the {\it orbit} of $a$ in $Q$, or the component of $a$ in $Q$. If $Q={\rm Orb}(a,Q)$ for some $a\in Q$, then $Q$ is said to be {\it connected}.

The following are well-known examples of quandles. 

\begin{itemize}
\item The simplest example of a quandle is the {\it trivial quandle} on a set $X$:  $Q=(X,\rhd)$ where
$$x\rhd y=y$$
 for all $x,y\in X$. If $|X|=n$, then the trivial quandle on $X$ is denoted by $T_n$. Note that $T_1$, the terminal object in the category of quandles, will appear repeatedly.

\item Let $G$ be a group with $x, y \in G$. Denote the conjugate of $y$ by $x$ as $y^x=x^{-1}yx$. For an arbitrary integer $n$, the set $G$ with the operation 
$$x\rhd y=y^{x^n}=x^{-n}yx^n$$ 
defines a quandle called the {\it $n$-th conjugation quandle} of the group $G$, denoted by ${\rm Conj}_n(G)$. For the sake of simplicity, we will write ${\rm Conj}(G)$ for the first conjugation quandle ${\rm Conj}_1(G)$. Note that the quandle ${\rm Conj}_0(G)$ is trivial. If $H\subset G$ is closed under conjugation, then ${\rm Conj}(H)$ is the quandle on $H$ with the operation given by $g\rhd h=g^{-1}hg$ for $g,h\in H$. 

\item Let $\varphi$ be an automorphism of a group $G$. Then the set $G$ with the operation 
$$x\rhd y=\varphi(yx^{-1})x$$ forms a quandle called the {\it generalized Alexander quandle} of $G$ with
respect to $\varphi$, denoted by ${\rm Alex}(G,\varphi)$.  Alexander quandles were studied, for example, in \cite{BDS, ClaElhSaiYet, Clark} and in \cite{BONalone} under the name of principal quandles.

\item If $G$ is an abelian group then the inversion operation $-$ is an automorphism of $G$, and $\mathrm{Alex}(G,-)$ is known as the {\it Takasaki quandle} \cite{Tak} of $G$.  When $C_n$ is the cyclic group of order $n$, $D_n = \mathrm{Alex}(C_n,-)$ is called the {\it dihedral quandle} on $n$ elements.
\end{itemize}

\subsection{Congruences and homomorphisms}\label{conghom}

Let $\alpha$ be an equivalence relation on a set $Q$. We write $a\, \alpha\, b$ to mean that $a,b\in Q$ are $\alpha$-related.  We denote the class of $a$ with respect to $\alpha$ by $[a]_\alpha$ and the quotient set by $Q/\alpha=\setof{[a]_\alpha}{a\in Q}$. 

A {\it congruence} of a quandle $Q$ is an equivalence relation $\alpha\subseteq Q\times Q$ that respects the algebraic structure. Namely, the operation on $Q/\alpha$ defined as 
$$[a]_{\alpha} \rhd [b]_{\alpha}=[a\rhd b]_{\alpha}$$
is well defined and provides a quandle structure on $Q/\alpha$. Equivalently, congruences are the equivalence relations such that if $a\,\alpha\,b$ and $c\,\alpha\,d$, then $(a\rhd c)\,\alpha\,(b\rhd d)$, and if $a\rhd x=c$ and $b\rhd y=d$, then $x\,\alpha\,y$.

The canonical surjective map $a\mapsto [a]_{\alpha}$ is a quandle homomorphism. If $h:Q\rightarrow Q^\prime$ is a quandle homomorphism, then the equivalence relation $$\ker{h}=\setof{(a,b)\in Q \times Q}{h(a)=h(b)}$$
is a congruence of $Q$. By virtue of the second homomorphism theorem for general algebraic structures \cite{Ber}, there exists a one-to-one correspondence between congruences and homomorphic images. The congruences of a quandle form a lattice, denoted by ${\rm Con}(Q)$, whose minimum is the identity relation $0_Q=\{(a,a)~:~a\in Q\}$ and maximum is $1_Q=Q \times Q$. Then ${\rm Con}(Q/\alpha)$ is given by the congruences 
$$\beta/\alpha=\setof{([a]_{\alpha},[b]_{\alpha})\in Q/\alpha\times Q/\alpha}{a\,\beta\,b}$$ 
for every $\beta\in {\rm Con}(Q)$ such that $\alpha\subseteq\beta$. Where it will not cause confusion, we will simply write $[a]_{\beta / \alpha}$ to mean $\big[ [a ]_\alpha \big]_{\beta / \alpha}$.

A class $\mathcal{K}$ of quandles  is said to be {\it closed under homomorphic images} if $Q/\alpha$ belongs to $\mathcal{K}$ for every quandle $Q\in\mathcal{K}$ and every congruence $\alpha\in {\rm Con}(Q)$. If $\alpha$ is a congruence of $Q$ and $a\in Q$, then the class $[a]_{\alpha}$ is clearly a subquandle of $Q$.   A class $\mathcal{K}$ of quandles  is said to be {\it closed under extensions} if $Q \in \mathcal{K}$ whenever $Q/\alpha  \in \mathcal{K}$ and $[a]_{\alpha}\in \mathcal{K}$ for every $a\in Q$.

Any congruence $\alpha$ of a quandle $Q$ induces a surjective homomorphism of groups 
\begin{align*}\pi_{\alpha}:\lmlt{(Q)}&\rightarrow \lmlt{(Q/\alpha)} \; \; {\rm defined \, by}\\
L_a&\mapsto L_{[a]_{\alpha}},~a\in Q
\end{align*}
which restricts and corestricts to the transvection groups. In particular,  for every $a\in Q$ and every $h\in\lmlt{(Q)}$ the equation
\begin{equation} \label{piprop}
[h(a)]_\alpha=\pi_\alpha(h)([a]_\alpha)
\end{equation} 
holds. Hence,
\begin{equation}\label{image of an orbit}
\setof{[h(a)]_\alpha}{h\in N}=\setof{\pi_\alpha(h)([a]_\alpha)}{h\in N}    
\end{equation}
for every $a\in Q$ and every $N\leq\lmlt{(Q)}$. 

We define the {\it transvection group relative to $\alpha$} as
$$\dis_{\alpha}=\langle \setof{L_a L_b^{-1}}{ a\,\alpha\,b}\rangle.$$
The set of all normal subgroups of the group $\lmlt{(Q)}$ contained in  $\dis(Q)$ is denoted by ${\rm Norm}(Q)=\setof{N\leq \dis(Q)}{N\trianglelefteq \lmlt{(Q)}}$. In particular, $\dis_\alpha \in {\rm Norm}(Q)$.  

If $N$ belongs to ${\rm Norm}(Q)$,  we define the following congruence of $Q$ as in \cite{BonSta} by:
\begin{align*}
	\mathcal{O}_N&=\setof{(a,b)\in Q \times Q}{b=n(a),~\text{for some}~n\in N},
\end{align*}
i.e., $\mathcal{O}_N$ is the orbit decomposition with respect to the action of $N$. The $\mathcal{O}_N$ decomposition will play a significant role in Section \ref{nredqua}.

From the third quandle axiom, it follows that $L_{x\rhd y}=L_x L_y L_x^{-1}$ for $L_x,L_y\in {\rm Inn}(Q)$. Thus, the map $L:x\mapsto L_x$ is a quandle homomorphism from $Q$ to ${\rm Conj}_{-1}({\rm Inn}(Q))$, with image its subquandle $L(Q) = \setof{L_x}{x\in Q}$. The kernel of the homomorphism $L$ is a congruence denoted by $\lambda_Q$, i.e., $x\,\lambda_Q\,y$ if and only if $L_x=L_y$. If $L$ is injective then $Q$ is said to be {\it faithful}. Note that $\dis_\alpha=1$ if and only if $\alpha\subseteq \lambda_Q$.  For any quandle we have the following inductively defined chain of homomorphic images: 
$$L_0(Q)=Q, \quad L_{n+1}(Q)=L(L_n(Q)).$$ 

We define the {\it enveloping group} of a quandle $Q$ in terms of generators and relations as
$$G_Q=\langle e_x \, | \, e_x e_y e_x^{-1}=e_{x\rhd y}, \, x,y\in Q\rangle.$$
The canonical map $i: x\mapsto e_x$ is a quandle homomorphism from $Q$ to Conj$(G_Q)$ with the image $E(Q)=\setof{e_x}{x\in Q}$. The map $G_Q\rightarrow \lmlt{(Q)}$ given by $e_x\mapsto L_x$ on the generators can be extended to a surjective group homomorphism with central kernel \cite[Proposition 2.37]{Eiss}.

\subsection{Abelian, nilpotent and solvable quandles} The properties of abelianess, nilpotence and solvability for quandles, in the sense of commutator theory \cite{fes}, have been investigated in \cite{BonSta}.

For an algebraic structure $A$, the commutator is a binary operation on the congruence lattice, defined using the centralization as follows. Let $\alpha$, $\beta$, and $\delta$ be congruences of $A$.   We say that {\it $\alpha$ centralizes $\beta$ over $\delta$} if for every $(n+1)$-ary term operation $t$ (i.e., any function $t:A^{n+1}\longrightarrow A$ obtained by an expression in the basic operations), every pair $a\,\alpha\,b$, and every $u_1\,\beta\,v_1$, $\dots$, $u_n\,\beta\,v_n$ we have
\begin{equation*}\label{term condition}
 t(a,u_1,\dots,u_n)\,\delta \, t(a,v_1,\dots,v_n)\quad\text{implies}\quad t(b,u_1,\dots,u_n) \, \delta \, t(b,v_1,\dots,v_n).
\end{equation*}
The {\it commutator} of $\alpha$ and $\beta$, denoted by $[\alpha,\beta]$, is the smallest congruence $\delta$ such that this implication holds. A congruence $\alpha$ is called {\it abelian} if $[\alpha,\alpha]=0_A$ and {\it central} if $[\alpha,1_A]=0_A$.

An algebraic structure $A$ is called {\it abelian} if the congruence $1_A$ is abelian. It is called {\it nilpotent} (resp. {\it solvable}) if and only if there is a chain of congruences 
\begin{displaymath}
    0_A=\alpha_0\leq\alpha_1\leq\ldots\leq\alpha_n=1_A
\end{displaymath}
such that $\alpha_{i+1}/\alpha_{i}$ is a central (resp. abelian) congruence of $A/\alpha_{i}$ for all $i\in\{0,1,\dots,n-1\}$. The length of the smallest such series is called the {\it length} of nilpotence (resp. solvability).

As for groups, one can define the series
\begin{displaymath}
    \gamma_{0}(A)=1_A,\qquad \gamma_{i+1}(A)=[\gamma_{i}(A),1_A],
\end{displaymath}
and
\begin{displaymath}
    \gamma^{0}(A)=1_A,\qquad \gamma^{i+1}(A)=[\gamma^{i}(A),\gamma^{i}(A)],
\end{displaymath}
and prove that an algebraic structure $A$ is nilpotent (resp. solvable) of length $n$ if and only if $\gamma_{n}(A)=0_A$ (resp. $\gamma^n(A)=0_A$). 

In the case of quandles, these properties turn out to be completely determined by group theoretic properties of the transvection group. Recall that a group acting on a set is {\it semiregular} if the pointwise stabilizers are trivial.

\begin{theorem}\cite{BonSta} \label{solvability_for_quandles}
	A quandle $Q$ is solvable (resp. nilpotent) if and only if $\dis(Q)$ is solvable (resp. nilpotent).  In particular, $Q$ is abelian if and only if $\dis(Q)$ is abelian and semiregular.
\end{theorem}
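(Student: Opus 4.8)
The plan is to translate the commutator-theoretic conditions on the congruence lattice $\con(Q)$ into group-theoretic conditions on $\dis(Q)$ by means of the assignments $\alpha\mapsto\dis_\alpha$ and $N\mapsto\mathcal{O}_N$ between $\con(Q)$ and ${\rm Norm}(Q)$, and then to match congruence series of $Q$ (with abelian or central quotients) against subnormal or central series of $\dis(Q)$.

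First I would record the elementary part of this dictionary. Both assignments are monotone; one has $\mathcal{O}_{\dis_\alpha}\subseteq\alpha$ (since for $c\,\alpha\,d$ the map $L_cL_d^{-1}$ sends any $a$ to $c\rhd(d\rhd^{-1}a)\,\alpha\,d\rhd(d\rhd^{-1}a)=a$) and $\dis_{\mathcal{O}_N}\subseteq N$ (using $L_{g(a)}=gL_ag^{-1}$ for $g\in\lmlt{(Q)}$ and the normality of $N$). By \eqref{piprop}, $\pi_\alpha$ restricts to a surjection $\dis(Q)\twoheadrightarrow\dis(Q/\alpha)$ whose kernel is $\{g\in\dis(Q):g(a)\,\alpha\,a\text{ for all }a\in Q\}$ and which contains $\dis_\alpha$; in particular $\dis(Q/\alpha)$ is always a quotient of $\dis(Q)$. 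The elementary fact that does the real work here is that $\lmlt{(Q)}$, and hence $\dis(Q)$, acts faithfully by construction, so that $\mathcal{O}_N=0_Q$ if and only if $N=1$.

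The technical core is the analysis of the abelian/central term condition, and it rests on the normal form of quandle terms: using $a\rhd(b\rhd c)=(a\rhd b)\rhd(a\rhd c)$ one rewrites every term as an iterated left product $z_1^{\pm1}\rhd(z_2^{\pm1}\rhd(\cdots\rhd z_k))$, so that a one-variable polynomial of $Q$ always has the form $y\mapsto w(y)(v)$ with $w$ a group word in $L_y^{\pm1}$ and constant translations and $v$ a constant or $y$ itself. Substituting into the term condition the polynomials $x\mapsto x\rhd\big(w(\bar y)(x)\big)$, where $w(\bar y)$ is a word in the translations $L_{y_i}^{\pm1}$ realizing a prescribed element of $\dis(Q)$, and using $x\rhd x=x$, one reads off that ``$\alpha$ centralizes $\beta$ over $\delta$'' forces both $[\dis_\alpha,\dis_\beta]\subseteq\dis_\delta$ and the assertion that an element of $\dis_\alpha$ fixing one point of a $\beta$-block modulo $\delta$ fixes the whole block; the converse inclusion comes from checking that the $\mathcal{O}$-congruence of the corresponding normal subgroup $W(\alpha,\beta)\in{\rm Norm}(Q)$ satisfies the term condition for every term. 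Taking $\alpha=\beta=1_Q$, where $\dis_{1_Q}=\dis(Q)$, this yields at once that $[1_Q,1_Q]=0_Q$ is equivalent to ``$[\dis(Q),\dis(Q)]=1$ and $\dis(Q)$ acts semiregularly'', that is, to the ``in particular'' clause; and relativized to a general congruence $\theta$ it shows that if $\theta$ is abelian (resp.\ central) then $\{g\in\dis(Q):g(a)\,\theta\,a\text{ for all }a\in Q\}$ is abelian and acts semiregularly on the $\theta$-blocks (resp.\ is central in $\dis(Q)$). I expect the main obstacle to be the precise bookkeeping of this semiregularity/stabilizer clause --- which fixed-point conditions the term condition really forces --- and it is this clause, not the group commutator, that accounts for the possible discrepancy between the nilpotence/solvability lengths of $Q$ and of $\dis(Q)$.

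With the dictionary in hand, both equivalences follow. In the forward direction, a congruence series $0_Q=\alpha_0\le\cdots\le\alpha_k=1_Q$ of $Q$ with abelian (resp.\ central) quotients yields, by applying the relativized statement of the previous step to each $Q/\alpha_i$ and using the surjections $\dis(Q)\twoheadrightarrow\dis(Q/\alpha_i)$, a subnormal series of $\dis(Q)$ with abelian quotients (resp.\ a central series of $\dis(Q)$), so $\dis(Q)$ is solvable (resp.\ nilpotent). For the converse one reconstructs a congruence series from the derived (resp.\ lower central) series $\dis(Q)=D_0\ge D_1\ge\cdots\ge D_m=1$: the congruences $\mathcal{O}_{D_i}$ form a chain from $\mathcal{O}_{D_m}=0_Q$ up to $\mathcal{O}_{D_0}=\mathcal{O}_{\dis(Q)}$, which is joined to $1_Q$ by the single further step $\mathcal{O}_{\dis(Q)}\le 1_Q$, valid because $Q/\mathcal{O}_{\dis(Q)}$ is a trivial quandle --- indeed $[a]\rhd[b]=[L_a(b)]=[b]$ there --- hence central. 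The successive quotients along this chain are abelian (resp.\ central) as congruences up to a possible failure of the semiregularity clause above, which one absorbs by inserting finitely many additional abelian (resp.\ central) steps; that this process terminates follows because the relevant quotients cannot increase the derived length (resp.\ nilpotence class) of the transvection group. This exhibits $Q$ as solvable (resp.\ nilpotent), completing both equivalences.
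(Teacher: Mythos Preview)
The paper does not prove this theorem at all: it is quoted verbatim from \cite{BonSta} and no argument is supplied here, so there is no in-paper proof to compare your proposal against.

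For what it is worth, your outline is the right one and matches the strategy of \cite{BonSta}: build the monotone maps $\alpha\mapsto\dis_\alpha$ and $N\mapsto\mathcal{O}_N$ between $\con(Q)$ and ${\rm Norm}(Q)$, use the left normal form of quandle terms to unwind the term condition, and show that $[\alpha,\beta]$ is governed by the group commutator $[\dis_\alpha,\dis_\beta]$ together with a semiregularity clause on the action of the relevant relative transvection group on the blocks. You have also correctly flagged that this stabilizer clause, not the group commutator, is what makes the abelian case read ``abelian \emph{and} semiregular'' rather than just ``abelian''.

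The one place where your sketch is not yet a proof is the last paragraph. The sentence ``one absorbs [failures of semiregularity] by inserting finitely many additional abelian (resp.\ central) steps; that this process terminates follows because the relevant quotients cannot increase the derived length'' is a hope, not an argument. What is actually needed (and what \cite{BonSta} supplies) is an explicit formula or bound for $[\alpha,\beta]$ in terms of $\mathcal{O}_{[\dis_\alpha,\dis_\beta]}$ and the orbit congruence of the block stabilizers, so that one can verify directly that each $\mathcal{O}_{D_{i+1}}/\mathcal{O}_{D_i}$ is abelian (resp.\ central) in $Q/\mathcal{O}_{D_i}$; without that, the ``insert more steps and it will terminate'' move could in principle loop forever.
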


A quandle $Q$ is is said to be {\it medial} if   
$$(a\rhd b)\rhd (c\rhd d)=(a\rhd c)\rhd (b\rhd d)$$
for every $a,b,c,d\in Q$. Note that Joyce calls such quandles ``abelian" \cite{Joyce}. We choose to use the word ``medial'' (as in \cite{JPSZ}) to avoid conflict with the term ``abelian" as used in Theorem~\ref{solvability_for_quandles}  above. A quandle is medial if and only if its transvection group is abelian. Therefore all abelian quandles are medial; however, due to Theorem \ref{solvability_for_quandles}, the converse is false in general.

\section{Reductive and locally reductive quandles}\label{redqua}

 \subsection{n-Reductive quandles}\label{nredqua} Let $n$ be a positive integer. A quandle $Q$ is said to be {\it n-reductive} if either of the following two equivalent identities holds:
\begin{align}\label{red and med}
((\ldots((a\rhd c_{1})\rhd c_{2})\ldots )\rhd c_{n-1})\rhd c_n&=
((\ldots((b\rhd c_{1})\rhd c_{2})\ldots )\rhd c_{n-1})\rhd c_n\\
\notag ((\ldots((a\rhd c_{1})\rhd c_{2})\ldots )\rhd c_{n-1})\rhd c_n&=
((\ldots(c_1\rhd c_{2})\ldots )\rhd c_{n-1})\rhd c_n
\end{align}
for all $a,b,c_1,c_2,\dots,c_n \in Q$.  Note that every 1-reductive quandle is trivial.

Let the set of all $n$-reductive quandles be denoted by $\mathcal{R}_n~(n\geq 1)$ and let $\mathcal{R}_{\omega}=\bigcup_{n\geq 1}\mathcal{R}_n$.
The property of being reductive (i.e., belonging to $\mathcal{R}_{\omega})$ can be characterized in terms of the following inductively defined descending chain of congruences:

\[\mathcal{O}_Q^0=1_Q, \qquad \qquad \mathcal{O}_Q^{n+1}=\mathcal{O}_{\dis_{\mathcal{O}^n_Q}}.\]
Proposition \ref{strong red and inn} will provide equivalent definitions of reductive quandles, but first we need a technical lemma.

\begin{lemma}\label{factor of orbits}
Let $Q$ be a quandle and $\alpha\in \textrm{Con}(Q)$. Then $$[a]_{\mathcal{O}_{Q/\alpha}^n}=\setof{[b]_\alpha}{b\;\mathcal{O}_{Q}^n\; a}.$$
\end{lemma}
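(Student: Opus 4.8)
The plan is to induct on $n$, keeping the quandle $Q$ and the congruence $\alpha\in\con(Q)$ fixed. The base case $n=0$ is immediate: $\mathcal O_Q^0=1_Q$ and $\mathcal O_{Q/\alpha}^0=1_{Q/\alpha}$, so both sides of the claimed identity equal all of $Q/\alpha$.

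For the inductive step, abbreviate $\beta=\mathcal O_Q^n$ and $N=\dis_\beta\in{\rm Norm}(Q)$, so that $\mathcal O_Q^{n+1}=\mathcal O_N$ by definition; likewise let $M\in{\rm Norm}(Q/\alpha)$ be the transvection group of $Q/\alpha$ relative to $\mathcal O_{Q/\alpha}^n$, so that $\mathcal O_{Q/\alpha}^{n+1}=\mathcal O_M$. The key point is the group identity
\[ M=\pi_\alpha(N). \]
Granting this, the class $[a]_{\mathcal O_{Q/\alpha}^{n+1}}=[a]_{\mathcal O_M}$ is by definition the $M$-orbit $\setof{g([a]_\alpha)}{g\in M}$ of $[a]_\alpha$ in $Q/\alpha$; rewriting $M=\pi_\alpha(N)$ and then applying \eqref{image of an orbit} to the subgroup $N\leq\lmlt(Q)$ turns this orbit into $\setof{\pi_\alpha(h)([a]_\alpha)}{h\in N}=\setof{[h(a)]_\alpha}{h\in N}$, which is exactly the image under the canonical projection of the $N$-orbit of $a$, namely $\setof{[b]_\alpha}{b\;\mathcal O_Q^{n+1}\;a}$. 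That is the required identity.

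It remains to prove $M=\pi_\alpha(N)$, and for this I would use the inductive hypothesis in the reformulated shape: for $x,y\in Q$, one has $[x]_\alpha\ \mathcal O_{Q/\alpha}^n\ [y]_\alpha$ if and only if there exists $z\in Q$ with $z\;\beta\;y$ and $z\;\alpha\;x$. For the inclusion $\pi_\alpha(N)\subseteq M$: since $N=\langle\setof{L_aL_b^{-1}}{a\;\beta\;b}\rangle$ and $\pi_\alpha(L_aL_b^{-1})=L_{[a]_\alpha}L_{[b]_\alpha}^{-1}$, it is enough to observe that $a\;\beta\;b$ forces $[a]_\alpha\ \mathcal O_{Q/\alpha}^n\ [b]_\alpha$ (take $z=a$), so that each such element lies in $M$. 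For the inclusion $M\subseteq\pi_\alpha(N)$: a generator of $M$ has the form $L_{[c]_\alpha}L_{[d]_\alpha}^{-1}$ with $[c]_\alpha\ \mathcal O_{Q/\alpha}^n\ [d]_\alpha$; choosing $z\in Q$ with $z\;\beta\;d$ and $z\;\alpha\;c$ gives $L_{[c]_\alpha}=L_{[z]_\alpha}$, hence $L_{[c]_\alpha}L_{[d]_\alpha}^{-1}=\pi_\alpha(L_zL_d^{-1})\in\pi_\alpha(N)$, since $z\;\beta\;d$ makes $L_zL_d^{-1}\in\dis_\beta=N$.

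I expect the main obstacle to be exactly this last inclusion $M\subseteq\pi_\alpha(N)$: one has to upgrade an abstract $\mathcal O_{Q/\alpha}^n$-relation between two $\alpha$-classes into an \emph{honest} pair of elements of $Q$ that are already $\beta$-related, which is where the fact that $\alpha$-equivalent elements of $Q$ have the same left translation in $Q/\alpha$ (though not necessarily in $Q$) gets used. Everything else is routine bookkeeping with \eqref{piprop}, \eqref{image of an orbit}, and the definitions of the relative transvection groups and the orbit congruences; note in particular that no separate verification that $\pi_\alpha(N)$ is normal in $\lmlt(Q/\alpha)$ is needed, as it follows from the identity $M=\pi_\alpha(N)$ itself.
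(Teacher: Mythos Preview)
Your proof is correct and follows essentially the same route as the paper's: induct on $n$, reduce the inductive step to the identity $\pi_\alpha(\dis_{\mathcal O_Q^n})=\dis_{\mathcal O_{Q/\alpha}^n}$, and then apply \eqref{image of an orbit}. If anything, you are more careful than the paper at the one delicate point, the inclusion $M\subseteq\pi_\alpha(N)$: the paper writes as though $[b]_\alpha\;\mathcal O_{Q/\alpha}^n\;[a]_\alpha$ directly gives $b\;\mathcal O_Q^n\;a$, whereas you correctly pass to a representative $z$ with $z\,\alpha\,c$ and $z\,\beta\,d$ before lifting the generator.
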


\begin{proof}
We proceed by induction on $n$; the case $n = 0$ is trivial.  Suppose the inductive hypothesis holds for $n$, i.e.,  
\begin{equation}
\label{useimmediately}
\setof{[b]_\alpha}{[b]_\alpha \;\mathcal{O}_{Q/\alpha}^n\; [a]_\alpha} = \setof{[b]_\alpha}{b\;\mathcal{O}_{Q}^n\; a}.
\end{equation}
We first observe that equation (\ref{useimmediately}) directly above means $\pi_\alpha (\dis_{\mathcal{O}_{Q}^n})=\dis_{\mathcal{O}_{Q/\alpha}^n}$: The group on the right-hand side is generated by elements of the form $L_{[a]_\alpha} L_{[b]_\alpha}^{-1}$ where $[b]_\alpha \;\mathcal{O}_{Q/\alpha}^n\; [a]_\alpha .$  But $L_{[a]_\alpha} L_{[b]_\alpha}^{-1} = \pi_\alpha (L_a L_b ^{-1})$, which is a generator of $\dis_{\mathcal{O}_{Q}^n}$ since $b \; \mathcal{O}^n_Q \; a$. 

For convenience, let $A = [a]_{\mathcal{O}^{n+1}_{Q/\alpha}}.$  By definition, $A =  \setof{m([a]_\alpha)}{m \in \dis_{\mathcal{O}^n_{Q/\alpha}}}$.  In light of the preceding observation, $A = \setof{\pi_\alpha(h)([a]_\alpha)}{h \in \dis_{\mathcal{O}^n_{Q}}}$.  Now, applying equation \eqref{image of an orbit} gives $A = \setof{[h(a)]_\alpha}{h \in \dis_{\mathcal{O}^n_{Q}}}$.  Finally, chasing the definitions gives $$[a]_{\mathcal{O}^{n+1}_{Q/\alpha}} = A = \setof{b_\alpha}{b \; \mathcal{O}_{\dis_{\mathcal{O}^n_Q}} \; a} =  \setof{[b]_\alpha}{b \; \mathcal{O}_{Q}^{n+1} \; a},$$ as desired. 
\end{proof}


\begin{proposition}\label{strong red and inn}
	Let $Q$ be a quandle. Then the following are equivalent.
	\begin{enumerate}
		\item[(i)] $Q\in\mathcal{R}_n$.
		\item[(ii)] $L_n(Q) =T_1$, the trivial quandle with one element.
		\item[(iii)] $\lmlt{(Q)}$ is nilpotent of nilpotency class $n-1$.
		\item[(iv)] $\mathcal{O}_Q^n=0_Q$.
	\end{enumerate}

\end{proposition}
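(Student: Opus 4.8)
The plan is to run a single induction on $n$ establishing that, for every quandle $Q$, the four statements at level $n$ are equivalent; the mechanism is that each of (i)--(iv) ``at level $n+1$ for $Q$'' is equivalent to the corresponding statement ``at level $n$ for the quandle $L(Q)=Q/\lambda_Q$'', while at level $1$ all four say exactly ``$Q$ is trivial''. The base case is routine: (i) at level $1$ is the stated fact that $1$-reductive quandles are trivial; (ii) says $L_1(Q)=L(Q)=T_1$, which forces all $L_a$ to coincide and hence, evaluating at a fixed point, $a\rhd b=L_a(b)=L_b(b)=b$; (iii) says $\lmlt(Q)=\langle L_a\mid a\in Q\rangle$ is trivial; and (iv) says $\dis(Q)=\dis_{1_Q}$ is trivial, again forcing all $L_a$ equal. (As usual quandles are taken nonempty, since $L(\emptyset)\neq T_1$.)

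Next I would record the four ``peeling'' equivalences, valid for $n\geq 1$. For (ii) it is immediate from the definition of the chain that $L_{n+1}(Q)=L_n(L(Q))$. For (i) I would prove the lemma $Q\in\mathcal{R}_{n+1}\iff L(Q)\in\mathcal{R}_n$: writing $[a;c_1,\dots,c_k]$ for the left-associated product $(\cdots(a\rhd c_1)\cdots)\rhd c_k$, the fact that $L:Q\to\mathrm{Conj}_{-1}(\lmlt(Q))$ is a quandle homomorphism with image $L(Q)$ gives $L_{[a;c_1,\dots,c_k]}=[L_a;L_{c_1},\dots,L_{c_k}]$ computed in $L(Q)$; the $(n+1)$-reductive law $[a;c_1,\dots,c_n]\rhd c_{n+1}=[c_1;c_2,\dots,c_n]\rhd c_{n+1}$ holding for all $c_{n+1}$ is equivalent to $L_{[a;c_1,\dots,c_n]}=L_{[c_1;c_2,\dots,c_n]}$, i.e.\ to $[L_a;L_{c_1},\dots,L_{c_n}]=[L_{c_1};L_{c_2},\dots,L_{c_n}]$ in $L(Q)$; and as $(a,c_1,\dots,c_n)$ runs over $Q^{n+1}$ the tuple $(L_a,L_{c_1},\dots,L_{c_n})$ runs over all of $L(Q)^{n+1}$, so this is precisely the $n$-reductive law for $L(Q)$.

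For (iv) I would chain equivalences: $\mathcal{O}_Q^{n+1}=\mathcal{O}_{\dis_{\mathcal{O}_Q^{n}}}$ equals $0_Q$ iff the action of $\dis_{\mathcal{O}_Q^{n}}$ has only singleton orbits, iff $\dis_{\mathcal{O}_Q^{n}}=1$ (the action of $\lmlt(Q)$ being faithful), iff $\mathcal{O}_Q^{n}\subseteq\lambda_Q$ (by the criterion $\dis_\alpha=1\iff\alpha\subseteq\lambda_Q$), and by Lemma~\ref{factor of orbits} applied with $\alpha=\lambda_Q$ this last condition is equivalent to $\mathcal{O}_{Q/\lambda_Q}^{n}=0_{Q/\lambda_Q}$, i.e.\ to $\mathcal{O}_{L(Q)}^{n}=0_{L(Q)}$ since $L(Q)\cong Q/\lambda_Q$. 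For (iii) the key input is the isomorphism $\lmlt(Q/\lambda_Q)\cong\lmlt(Q)/Z(\lmlt(Q))$: the surjection $\pi_{\lambda_Q}$ sends $L_a$ to $L_{[a]_{\lambda_Q}}$, which under the identification $Q/\lambda_Q\cong L(Q)$ is conjugation by $L_a$ restricted to $L(Q)$; hence $\pi_{\lambda_Q}$ is ``restriction to $L(Q)$ of conjugation'', so $\ker{\pi_{\lambda_Q}}$ is the centralizer of $L(Q)$ in $\lmlt(Q)$, which is $Z(\lmlt(Q))$ because $L(Q)$ generates $\lmlt(Q)$. Combined with the standard fact that $G$ is nilpotent of class $\leq c+1$ iff $G/Z(G)$ is nilpotent of class $\leq c$, this yields: $\lmlt(Q)$ is nilpotent of class $\leq n$ iff $\lmlt(L(Q))$ is nilpotent of class $\leq n-1$. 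Feeding the four peeling equivalences into the inductive hypothesis applied to the quandle $L(Q)$ then closes the induction.

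I expect the main obstacle to be the (iii)-step, namely the clean identification $\ker{\pi_{\lambda_Q}}=Z(\lmlt(Q))$: one must check that conjugation by an arbitrary $g\in\lmlt(Q)$ carries $L(Q)$ into itself and acts on it as a quandle automorphism (both follow from $\phi L_z\phi^{-1}=L_{\phi(z)}$ for automorphisms $\phi$), and that the map $g\mapsto(\mathrm{conj}_g)|_{L(Q)}$ really coincides with $\pi_{\lambda_Q}$ (they agree on the generators $L_a$ and are both homomorphisms). One small caveat: strictly, (iii) should read ``nilpotent of class \emph{at most} $n-1$'' --- a trivial quandle lies in every $\mathcal{R}_n$ yet has $\lmlt(Q)=1$ --- and it is this ``$\leq$'' form that the induction produces and that is genuinely equivalent to $Q\in\mathcal{R}_n$.
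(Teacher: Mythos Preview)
Your argument is correct and self-contained. The paper takes a different route: it cites Theorem~4.1 of \cite{JPZ yoo} for the equivalence of (i), (ii), and (iii), and then proves only (i)$\iff$(iv) by induction on $n$. In the direction (i)$\Rightarrow$(iv) the paper's step is essentially your peeling for (iv) via $Q/\lambda_Q$ and Lemma~\ref{factor of orbits}; in the direction (iv)$\Rightarrow$(i) the paper instead passes to $Q'=Q/\mathcal{O}_Q^{\,n}$, uses Lemma~\ref{factor of orbits} to see $\mathcal{O}_{Q'}^{\,n}=0_{Q'}$, applies induction to $Q'$, and then invokes \cite{BonSta2} to conclude that $Q$ is $(n{+}1)$-reductive from the facts that $Q/\mathcal{O}_Q^{\,n}$ is $n$-reductive and $\mathcal{O}_Q^{\,n}\subseteq\lambda_Q$. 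Your approach is more uniform: the single mechanism ``pass from $Q$ to $L(Q)=Q/\lambda_Q$'' handles all four conditions at once, and your direct lemma $Q\in\mathcal{R}_{n+1}\iff L(Q)\in\mathcal{R}_n$ replaces the external citation to \cite{BonSta2}. The identification $\ker{\pi_{\lambda_Q}}=Z(\lmlt(Q))$ you flagged is indeed correct for the reason you give (conjugation by $g$ fixes every $L_a$ iff $g$ centralizes the generating set $L(Q)$), and your caveat that (iii) must be read as ``class at most $n-1$'' is well taken.
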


\begin{proof}
	The equivalence between (i), (ii) and (iii) is Theorem 4.1 of  \cite{JPZ yoo}. We will show (i) is equivalent to (iv) using induction on $n$. In this proof we abbreviate $\lambda_Q$ by $\lambda$.
	
	(i) $\Rightarrow$ (iv): The basis of induction ($n=1$) is simple: in this situation $\dis(Q)=1$ and therefore $\mathcal{O}_Q^1=0_Q$. For the inductive step, let $Q$ be an $(n+1)$-reductive quandle. Then $Q/\lambda= L(Q)$ is $n$-reductive  and, by induction, $\mathcal{O}_{Q/\lambda}^{n} =0_{Q/\lambda}$. By Lemma  \ref{factor of orbits}, $\setof{[b]_\lambda}{b \; \mathcal{O}_Q^{n} \;a} = \big[ [a]_\lambda \big]_{\mathcal{O}_{Q/ \lambda}^{n}} = \{[a]_\lambda\}$. Thus, if $b\; \mathcal{O}_Q^{n} \;a$, the class $[b]_\lambda = [a]_\lambda,$ i.e., $b \; \lambda \; a$. In other words,  $\mathcal{O}_Q^{n}\subseteq \lambda$ and so $\dis_{\mathcal{O}^{n}_Q}\leq \dis_{\lambda}=1$. Hence $\mathcal{O}_Q^{n+1}=0_Q,$ completing the induction.
	
	(iv) $\Rightarrow$ (i): If $n=1$,  then  $\mathcal{O}_Q^1=0_Q$ means that  $Q$ is trivial and therefore $Q$ is $1$-reductive.  Let $\mathcal{O}^{n+1}_Q=0_Q$ and $Q^\prime=Q/\mathcal{O}^{n}_Q$.  The group homomorphism $\pi_{\mathcal{O}_Q^n}$ maps $\dis_{\mathcal{O}^k_Q}$ onto $\dis_{\mathcal{O}^{k}_Q/\mathcal{O}^{n}_Q}$ and then, by Lemma \ref{factor of orbits}, $\mathcal{O}^k_{Q^\prime}=\mathcal{O}_Q^k/\mathcal{O}_Q^n$ for every $k\leq n$.  Therefore $Q^\prime$ has a chain of congruences
	$$1_{Q^\prime}=\mathcal{O}_{Q^\prime}^0\subseteq \mathcal{O}^1_{Q^\prime}\subseteq \ldots\subseteq \mathcal{O}^{n-1}_{Q^\prime}\subseteq \mathcal{O}^{n}_{Q^\prime}=\mathcal{O}^{n}_Q/\mathcal{O}^{n}_Q= 0_{Q^\prime}.$$
	By induction, $Q^\prime$ is $n$-reductive.  Since $\mathcal{O}^{n+1}_Q=0_Q$, the orbits of $\dis_{\mathcal{O}^{n}_Q}$ are trivial, i.e., $\dis_{\mathcal{O}^{n}_Q}=1$. According to \cite{JPZ yoo}, $\mathcal{O}^{n}_Q\subseteq \lambda$. Thus, since $Q/\mathcal{O}^{n}_Q$ is $n$-reductive and $\mathcal{O}^{n}_Q\subseteq \lambda$, $Q$ is $(n+1)$-reductive by \cite{BonSta2}.
\end{proof}

\begin{corollary} \label{trivialreductive}
Let $Q$ be a reductive quandle.  If $Q$ is either faithful or connected, then $Q = T_1$.
\end{corollary}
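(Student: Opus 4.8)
The plan is to treat the two hypotheses separately, in each case invoking a different one of the equivalent descriptions of membership in $\mathcal{R}_n$ given by Proposition~\ref{strong red and inn}. Throughout, "$Q$ reductive" means $Q\in\mathcal{R}_n$ for some $n\ge 1$. (Note that $T_1$ is itself both faithful and connected, so the conclusion is sharp.)

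For the faithful case I would show that faithfulness is inherited all the way along the chain $L_0(Q)=Q,\ L_1(Q)=L(Q),\ L_2(Q)=L(L_1(Q)),\dots$, and then read off the result from clause (ii) of Proposition~\ref{strong red and inn}. Concretely: since $L_1(Q)=Q/\lambda_Q$ and $Q$ faithful means $\lambda_Q=0_Q$, we get $L_1(Q)\cong Q$; as faithfulness is an isomorphism invariant, $L_1(Q)$ is again faithful, and iterating this observation gives $L_k(Q)\cong Q$ for every $k\ge 0$. If now $Q\in\mathcal{R}_n$, then $L_n(Q)=T_1$ by Proposition~\ref{strong red and inn}, so $Q\cong T_1$, i.e., $Q=T_1$.

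For the connected case I would instead compute the congruence chain $\mathcal{O}_Q^n$ and use clause (iv). Since $\dis_{1_Q}=\dis(Q)$, and $Q$ connected says exactly that $\dis(Q)$ acts transitively on $Q$, we obtain $\mathcal{O}_Q^1=\mathcal{O}_{\dis_{1_Q}}=\mathcal{O}_{\dis(Q)}=1_Q$. Inductively, whenever $\mathcal{O}_Q^k=1_Q$ we have $\dis_{\mathcal{O}_Q^k}=\dis(Q)$, hence $\mathcal{O}_Q^{k+1}=\mathcal{O}_{\dis(Q)}=1_Q$; thus the whole chain is constantly $1_Q$. But $Q$ reductive forces $\mathcal{O}_Q^n=0_Q$ for some $n$ by Proposition~\ref{strong red and inn}, so $1_Q=0_Q$, i.e., $|Q|\le 1$; since a connected quandle is nonempty, $Q=T_1$.

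There is no substantive obstacle here; the only points requiring a moment's care are (a) that faithfulness passes through quandle isomorphisms, which is what legitimizes the induction $L_k(Q)\cong Q$, and (b) the bookkeeping identifications $\dis_{1_Q}=\dis(Q)$ and "$Q$ connected $\iff$ $\dis(Q)$ transitive," which are precisely what make the chain $\mathcal{O}_Q^n$ collapse to $1_Q$ and hence contradict reductivity unless $Q$ is a point.
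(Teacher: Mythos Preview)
Your proof is correct and follows essentially the same approach as the paper's own proof: both handle the faithful case via Proposition~\ref{strong red and inn}(ii) and the connected case via Proposition~\ref{strong red and inn}(iv). Your version simply fills in the details the paper leaves implicit (the induction $L_k(Q)\cong Q$ in the faithful case and the verification that $\mathcal{O}_Q^k=1_Q$ for all $k$ in the connected case).
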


\begin{proof}
In the faithful case, $L$ is injective, so $|Q| = 1$ by Proposition \ref{strong red and inn} (ii).  When $Q$ is connected, it is not difficult to see that all of the $\mathcal{O}^n _Q = 1_Q$.   Thus $1_Q = 0_Q$ by Proposition \ref{strong red and inn} (iv), so again, $Q = T_1$.
\end{proof}

\begin{corollary}\label{red iff nilpotent}
	Let $G$ be a group. Then ${\rm Conj}(G)\in \mathcal{R}_n$ if and only if $G$ is nilpotent of class $n$.
\end{corollary}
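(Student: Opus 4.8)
The plan is to reduce the statement to a standard fact of group theory by way of Proposition~\ref{strong red and inn}. First I would pin down the inner automorphism group of $\mathrm{Conj}(G)$. Here $L_x$ is the map $y\mapsto x^{-1}yx$, i.e.\ conjugation by $x$, and one checks directly that $L_xL_y=L_{yx}$; thus $x\mapsto L_x$ is a surjective anti-homomorphism from $G$ onto ${\rm Inn}(\mathrm{Conj}(G))=\setof{L_x}{x\in G}$, and $L_x$ is the identity map exactly when $x\in Z(G)$. Since every group is isomorphic to its opposite group, this gives ${\rm Inn}(\mathrm{Conj}(G))\cong G/Z(G)$.

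Next, Proposition~\ref{strong red and inn} (the equivalence of (i) and (iii)) says that $\mathrm{Conj}(G)\in\mathcal{R}_n$ if and only if ${\rm Inn}(\mathrm{Conj}(G))$ is nilpotent of class $n-1$, hence, by the previous paragraph, if and only if $G/Z(G)$ is nilpotent of class $n-1$. So the corollary follows once we know that, for $n\ge 1$, the group $G$ is nilpotent of class $n$ precisely when $G/Z(G)$ is nilpotent of class $n-1$. For this I would use the elementary identity $\gamma_i(G/N)=\gamma_i(G)N/N$ for $N\trianglelefteq G$ (induction on $i$); with $N=Z(G)$ it gives $\gamma_i(G/Z(G))=\gamma_i(G)Z(G)/Z(G)$, which vanishes exactly when $\gamma_i(G)\subseteq Z(G)$, i.e.\ exactly when $\gamma_{i+1}(G)=[\gamma_i(G),G]=1$. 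Applying this with $i=n$ gives $\gamma_n(G/Z(G))=1\iff\gamma_{n+1}(G)=1$ and with $i=n-1$ gives $\gamma_{n-1}(G/Z(G))=1\iff\gamma_n(G)=1$; together these establish the equivalence of (exact) nilpotency classes, and chaining back through Proposition~\ref{strong red and inn} proves the corollary.

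The argument is short; what needs care is the identification in the first step — it is the \emph{center} of $G$, not some larger subgroup, that is the kernel of $x\mapsto L_x$, and one should double-check the anti-homomorphism bookkeeping $L_xL_y=L_{yx}$ — and the degenerate case $n=1$. In that case ``$G/Z(G)$ nilpotent of class $0$'' means $G=Z(G)$, i.e.\ $G$ abelian, which also covers the trivial group, whereas ``$G$ nilpotent of class $1$'' read strictly would exclude it. I would resolve this by reading ``nilpotent of class $n$'' as $\gamma_{n+1}=1$ (class at most $n$), which is consistent with the fact that $\mathcal{R}_n\subseteq\mathcal{R}_{n+1}$ already forces this reading in Proposition~\ref{strong red and inn}.
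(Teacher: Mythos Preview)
Your proof is correct and follows exactly the same route as the paper: identify ${\rm Inn}({\rm Conj}(G))\cong G/Z(G)$ and then invoke Proposition~\ref{strong red and inn}(i)$\Leftrightarrow$(iii). The paper simply asserts the equality $\lmlt({\rm Conj}(G))=G/Z(G)$ and leaves the standard group-theoretic step (class of $G$ versus class of $G/Z(G)$) implicit, whereas you spell out both the anti-homomorphism computation and the lower-central-series argument; the extra care about the $n=1$ edge case and the ``class at most $n$'' reading is appropriate and consistent with the paper's usage.
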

\begin{proof}
The statement follows from Proposition~\ref{strong red and inn} due to the equality $\lmlt({\rm Conj}(G))=G/Z(G)$.
\end{proof}

\subsection{Locally reductive quandles} Let $n$ be a positive integer. A quandle $Q$ is said to be {\it $n$-locally reductive} if  the equality 
\begin{equation}\label{red}
(\ldots ((a\rhd \underbrace{b )\rhd b)\rhd\ldots)\rhd b}_n=b
\end{equation}
holds for all $a,b\in Q$. A $1$-locally reductive quandle is trivial. We denote the class of all $n$-locally reductive quandles by $\mathcal{LR}_n~(n\geq 1)$ and let $\mathcal{LR}_{\omega}=\bigcup_{n\geq 1}\mathcal{LR}_n$. Every $n$-reductive quandle is obviously $n$-locally reductive; therefore, we have the inclusions:
\begin{equation}\label{MRinclusions}
\begin{matrix}\mathcal{LR}_{n-1} &\subset& \mathcal{LR}_n&\subset&\mathcal{LR}_{\omega}\\
\cup& &\cup&&\cup\\
\mathcal{R}_{n-1} &\subset& \mathcal{R}_n&\subset&\mathcal{R}_{\omega}
\end{matrix}
\end{equation}

If $Q$ is a medial quandle, then \eqref{red and med} and \eqref{red} are equivalent by \cite{JPSZ}. That is, $Q\in\mathcal{R}_n$ if and only if $Q\in\mathcal{LR}_n$. We want to understand connections between the classes $\mathcal{R}_{\omega}$ and $\mathcal{LR}_{\omega}$ in general.
\begin{lemma}\label{R is cloded for}
	The class $\mathcal{LR}_{\omega}$ is closed under taking subquandles, homomorphic images, finite direct products, and extensions. Moreover, if $Q/\alpha\in \mathcal{LR}_n$ and $[a]_\alpha\in \mathcal{LR}_m$ for all $a\in Q$, then $Q\in\mathcal{LR}_{n+m}$.
\end{lemma}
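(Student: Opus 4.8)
The plan is to prove the four closure properties by direct computation with the defining identity \eqref{red}, handling the extension claim (and its quantitative refinement) last, as that is the substantive part. For \emph{subquandles}: if $Q \in \mathcal{LR}_n$ and $S \le Q$, then \eqref{red} holds for all $a,b \in S$ a fortiori, so $S \in \mathcal{LR}_n \subseteq \mathcal{LR}_\omega$. For \emph{homomorphic images}: if $h : Q \to Q'$ is surjective and $Q \in \mathcal{LR}_n$, then for $a',b' \in Q'$ pick preimages $a,b$; applying $h$ to the identity $(\ldots((a\rhd b)\rhd b)\rhd\ldots)\rhd b = b$ (with $n$ copies of $b$) and using that $h$ is a homomorphism yields the same identity for $a',b'$, so $Q' \in \mathcal{LR}_n$. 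For \emph{finite direct products}: if $Q_1 \in \mathcal{LR}_{n_1}$ and $Q_2 \in \mathcal{LR}_{n_2}$, then in $Q_1 \times Q_2$ the identity \eqref{red} holds with $n = \max(n_1,n_2)$ copies of $b$, computing coordinatewise and noting that once $a$ has been ``absorbed'' into $b$ in a coordinate, further right-multiplications by $b$ in that coordinate leave $b$ fixed by the idempotent axiom (i); induction then covers any finite product.

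For the extension statement, suppose $\alpha \in \mathrm{Con}(Q)$ with $Q/\alpha \in \mathcal{LR}_n$ and $[a]_\alpha \in \mathcal{LR}_m$ for every $a \in Q$. Fix $a,b \in Q$ and write $f = L_b$ for the (bijective) left translation by $b$; I want to show $f^{n+m}(a) = b$. Because $Q/\alpha \in \mathcal{LR}_n$, applying the quotient map gives $[f^n(a)]_\alpha = [b]_\alpha$, i.e.\ $c := f^n(a)$ lies in the subquandle $[b]_\alpha$. Now $f$ restricted to $[b]_\alpha$ is exactly left translation by $b$ inside the subquandle $[b]_\alpha$, which belongs to $\mathcal{LR}_m$; hence $f^m$ applied to any element of $[b]_\alpha$ — in particular to $c$ — returns $b$. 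Therefore $f^{n+m}(a) = f^m(f^n(a)) = f^m(c) = b$, which is precisely \eqref{red} with $n+m$ copies of $b$. This shows $Q \in \mathcal{LR}_{n+m}$, giving both the ``moreover'' clause and, taking $m,n$ arbitrary, closure under extensions (the hypothesis ``$Q/\alpha \in \mathcal{LR}_\omega$ and every class in $\mathcal{LR}_\omega$'' needs the uniform bounds $n$, $m = \max_a m_a$, which is where finiteness of the index set or of $Q$ is implicitly used — I should state the extension claim with the understanding that a common bound exists, exactly as phrased).

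The one point requiring care — and the step I expect to be the main obstacle — is the claim that $f^n(a)$ and $b$ are $\alpha$-related, and more precisely that left translation by $b$ in $Q$ \emph{restricts} to left translation by $b$ within the subquandle $[b]_\alpha$. For the first: the canonical map $Q \to Q/\alpha$ sends $L_b$ to $L_{[b]_\alpha}$ by \eqref{piprop}, so $[f^n(a)]_\alpha = L_{[b]_\alpha}^n([a]_\alpha) = [b]_\alpha$ since $Q/\alpha \in \mathcal{LR}_n$; this is clean. For the second: if $x \,\alpha\, b$ then $b \rhd x \,\alpha\, b \rhd b = b$, so $L_b$ does map the class $[b]_\alpha$ into itself, and it is a bijection of that finite-or-infinite set because $L_b$ is injective on $Q$ and surjectivity onto $[b]_\alpha$ follows since $L_b^{-1}$ also preserves the class (as $b \rhd y = b \Rightarrow y \,\alpha\, b$ when $b \rhd b = b$, using the congruence condition). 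So $L_b|_{[b]_\alpha}$ is the left-translation-by-$b$ operator of the subquandle $[b]_\alpha$, and the $\mathcal{LR}_m$ identity there applies verbatim. I would write this up by first isolating the observation ``$L_b$ preserves $[b]_\alpha$'' as a one-line remark, then running the two-step argument above.
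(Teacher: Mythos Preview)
Your overall strategy matches the paper's: the first three closure properties follow because each $\mathcal{LR}_n$ is an equational class (the paper just invokes the word ``variety''; you spell out each verification), and for extensions both you and the paper argue that after $n$ iterations one lands in $[b]_\alpha$, where $m$ further iterations give $b$.

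There is, however, a concrete error in your extension argument. You set $f = L_b$, the left translation $x \mapsto b \rhd x$, and claim that the identity \eqref{red} reads $f^{n+m}(a) = b$. It does not: \eqref{red} iterates the map $g(x) = x \rhd b$, so that $(\ldots((a \rhd b) \rhd b)\ldots)\rhd b = g^{n}(a)$, and $g \neq L_b$ in the paper's conventions. Indeed, $L_b^{n}(a) = b$ for all $a,b$ would force $a = b$ (since $L_b$ is a bijection fixing $b$), so your literal statement characterizes only the one-element quandle. The repair is painless: replace $f$ by $g$ throughout. Note that $g$ is not a bijection in general, so your paragraph on surjectivity of $L_b|_{[b]_\alpha}$ is unnecessary; all you need is that $g$ sends $[b]_\alpha$ into itself, which follows exactly as you say (if $x\,\alpha\,b$ then $x\rhd b\,\alpha\, b\rhd b = b$). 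With this correction your argument is identical to the paper's two displayed lines.

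Your caveat that the unrestricted extension claim needs a uniform $m$ is well observed and is a point the paper glosses over: its proof, like yours, only establishes the ``moreover'' clause. Without a common bound the conclusion can fail---for instance $\bigsqcup_{n\geq 1} D_{2^n}$ has a congruence with trivial quotient and each class in $\mathcal{LR}_\omega$, yet lies in no $\mathcal{LR}_N$.
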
	

\begin{proof}
	Each class $\mathcal{LR}_n$ is a variety, so clearly $\mathcal{LR}_{\omega}$ is closed under taking subquandles, homomorphic images, and finite direct products since the classes $\setof{\mathcal{LR}_n}{n\in \mathbb{N}}$ form a chain. Let $Q/\alpha\in \mathcal{LR}_n$ and $[a]_\alpha\in \mathcal{LR}_m$ for every $a\in Q$. Then 
	$$[(\ldots ((a\rhd \underbrace{b )\rhd b)\rhd\ldots)\rhd b}_n]_\alpha=[b]_\alpha$$
	and so 
	$$(\ldots ((a\rhd \underbrace{b )\rhd b)\rhd\ldots)\rhd b}_n)\rhd\underbrace{ b)\rhd\ldots)\rhd b}_{m}=b$$
	for every $a,b\in Q$.
\end{proof}

\begin{corollary}\label{reductivity by component}
	Let $Q$ be a quandle. If all the orbits of $Q$ are $n$-locally reductive, then $Q$ is $(n+1)$-locally reductive.
\end{corollary}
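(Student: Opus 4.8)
The plan is to deduce this corollary directly from Lemma~\ref{R is cloded for} by exhibiting a suitable congruence $\alpha$. The natural candidate is the orbit decomposition $\alpha = \mathcal{O}_{\dis(Q)}$, whose classes are precisely the orbits $\mathrm{Orb}(a,Q)$ of $Q$. I would first observe that this is a genuine congruence of $Q$ (it is of the form $\mathcal{O}_N$ for $N = \dis(Q) \in \mathrm{Norm}(Q)$, so it is covered by the discussion in Section~\ref{conghom}). The hypothesis then says exactly that every class $[a]_\alpha = \mathrm{Orb}(a,Q)$ lies in $\mathcal{LR}_n$.

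The second ingredient is that the quotient $Q/\alpha$ is trivial, hence $1$-locally reductive, i.e. $Q/\alpha \in \mathcal{LR}_1$. This is because in $Q/\alpha$ any two elements $[a]_\alpha, [b]_\alpha$ satisfy $[b]_\alpha \rhd [a]_\alpha = [b \rhd a]_\alpha = [a]_\alpha$, since $b \rhd a = L_b(a)$ lies in the same $\dis(Q)$-orbit as $a$ (indeed $L_b L_c^{-1} \in \dis(Q)$ for any $c$, and $a$ is fixed appropriately — more simply, $L_b(a)$ and $a$ have the same image under the canonical map to $Q/\mathcal{O}_{\dis(Q)}$ because $L_b = (L_b L_a^{-1}) L_a$ and one checks orbits are preserved). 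So the quotient by the orbit congruence is always a trivial quandle.

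With $Q/\alpha \in \mathcal{LR}_1$ and $[a]_\alpha \in \mathcal{LR}_n$ for all $a$, the ``Moreover'' clause of Lemma~\ref{R is cloded for} gives $Q \in \mathcal{LR}_{1+n} = \mathcal{LR}_{n+1}$, which is the claim.

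The only point requiring a little care — the ``main obstacle,'' though it is minor — is verifying cleanly that the quotient of a quandle by its orbit congruence is trivial; this amounts to checking that $L_b(a) \in \mathrm{Orb}(a,Q)$ for all $a,b \in Q$, which is immediate since $\mathrm{Orb}(a,Q)$ is by definition stable under all of $\lmlt(Q)$ and $L_b \in \lmlt(Q)$. Everything else is a direct invocation of the lemma, so the proof is short. (Alternatively, one could avoid Lemma~\ref{R is cloded for} entirely and argue by hand: apply $L_b^{\,-1}$ considerations to reduce $(\ldots((a \rhd b)\rhd b)\ldots)\rhd b$ inside the orbit of $b$, then use that the orbit is $n$-locally reductive to collapse $n$ further applications of $b$; but routing through the lemma is cleaner.)
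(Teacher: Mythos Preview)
Your proposal is correct and follows exactly the same approach as the paper: choose $\alpha = \mathcal{O}_{\dis(Q)}$, note that $Q/\alpha$ is trivial (hence in $\mathcal{LR}_1$) while each class $[a]_\alpha$ is an orbit (hence in $\mathcal{LR}_n$ by hypothesis), and invoke the ``Moreover'' clause of Lemma~\ref{R is cloded for}. The paper's proof is simply a two-sentence version of what you wrote.
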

\begin{proof}
	The quotient $Q/\mathcal{O}_{\dis (Q)}$ is trivial, i.e., $1$-locally reductive. Hence we can apply Lemma \ref{R is cloded for}.
\end{proof} 

Let $G$ be a group with $a,b\in G$. Following standard notation, we use $[b,_{k}a]$ to denote the following inductively defined element of~$G$:
$$ [b,_{0}a] =a, \qquad   [b,_{n+1}a]=[b,[b,_na]] $$
for $n\in{\mathbb{N}}$. A subset $H$ of $G$ is called an {\it $n$-Engel subset} if $[b,_n a]=1$ for every $a,b\in H$. An element $g\in G$ is an {\it $n$-Engel element} of $G$ if $[b,_n g]=1$ for every $b\in G$.  The group $G$ itself is called {\it $n$-Engel} if it is an $n$-Engel subset of itself, or, equivalently, if every element is $n$-Engel.
\begin{proposition}\label{reductivity and Engel prop}
	Let $G$ be a group and $H$ be a subset of $G$ closed under conjugation. Then the following statements are equivalent.
	\begin{enumerate}
		\item[(i)] ${\rm Conj}(H)$ is $n$-locally reductive. 
		\item[(ii)] $H$ is an $n$-Engel subset of $G$. 
	\end{enumerate}
	In particular, ${\rm Conj}(G)$ is $n$-locally reductive if and only if $G$ is an $n$-Engel group. 
\end{proposition}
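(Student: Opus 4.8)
The plan is to turn the defining identity \eqref{red} for ${\rm Conj}(H)$ into a single closed formula involving iterated commutators, after which both implications fall out at once. Recall that in ${\rm Conj}(H)$ the operation is $g\rhd h=g^{-1}hg$, and observe the elementary identity $x\rhd y=x^{-1}yx=y\cdot(y^{-1}x^{-1}yx)=y\,[y,x]$, valid in any group (with the convention $[u,v]=u^{-1}v^{-1}uv$). For $k\geq 1$ write $w_{k}(a,b)$ for the left-hand side of \eqref{red} with $k$ occurrences of $b$, so that \eqref{red} for $Q={\rm Conj}(H)$ says precisely $w_{n}(a,b)=b$ for all $a,b\in H$.

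The heart of the argument is to prove by induction on $k\geq 1$ that $w_{k}(a,b)=b\,[b,_{k}a]$. The base case $k=1$ is exactly $a\rhd b=b\,[b,a]=b\,[b,_{1}a]$ by the elementary identity. For the inductive step I would set $x=w_{k}(a,b)=b\,[b,_{k}a]$ and compute $w_{k+1}(a,b)=x\rhd b=b\,[b,x]=b\,\big[b,\,b\,[b,_{k}a]\big]$; then the standard commutator identity $[u,vz]=[u,z]\,[u,v]^{z}$ applied with $u=v=b$ and $z=[b,_{k}a]$, together with $[b,b]=1$, collapses this to $b\,[b,\,[b,_{k}a]]=b\,[b,_{k+1}a]$, finishing the induction.

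Once the formula $w_{n}(a,b)=b\,[b,_{n}a]$ is in hand, the proposition is immediate: ${\rm Conj}(H)$ is $n$-locally reductive iff $b\,[b,_{n}a]=b$ for all $a,b\in H$, iff $[b,_{n}a]=1$ for all $a,b\in H$, which is exactly the assertion that $H$ is an $n$-Engel subset of $G$. The concluding ``in particular'' is the special case $H=G$, since by definition $G$ is an $n$-Engel subset of itself precisely when $G$ is an $n$-Engel group.

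I do not expect a real obstacle here; the computation is short and self-contained. The two points that need care are fixing the commutator convention so that $x\rhd y=y\,[y,x]$ holds on the nose (an opposite convention would force a cosmetic rearrangement throughout), and beginning the induction at $k=1$ rather than $k=0$ — the formula genuinely fails for $k=0$, where $w_{0}(a,b)=a$ but $b\,[b,_{0}a]=ba$.
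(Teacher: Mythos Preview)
Your proposal is correct and follows essentially the same route as the paper: both prove by induction a closed formula for the $n$-fold iterate $(\ldots((a\rhd b)\rhd b)\ldots)\rhd b$ in terms of the iterated commutator $[b,_{k}a]$, and then read off the equivalence with the $n$-Engel condition. The only cosmetic difference is that the paper records the formula as the conjugate $[b,_{n-1}a]^{-1}\,b\,[b,_{n-1}a]$ whereas you record it as the product $b\,[b,_{n}a]$; these are literally equal, so the arguments coincide.
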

\begin{proof}
	We will show, by induction on $n$, that
	\begin{displaymath}
	(\ldots ((a\rhd \underbrace{b )\rhd b)\rhd\ldots)\rhd b}_n=[b,_{n-1} a]^{-1}b[b,_{n-1}a]
	\end{displaymath}
	for all $a, b \in G$. The basis of this induction is simply the definition of the quandle operation in ${\rm Conj}(H)$.  For the induction step, 
	\begin{align}
	\notag (\ldots ((a\rhd \underbrace{b )\rhd b)\rhd\ldots)\rhd b}_{n+1}&=(\ldots (((a\rhd \underbrace{b )\rhd b)\rhd\ldots)\rhd b)}_n\rhd b\\
	\notag&=[b,_{n-1} a]^{-1}b^{-1}[b,_{n-1}a]bbb^{-1}[b,_{n-1} a]^{-1}b[b,_{n-1}a]\\
	\notag&=[b,_{n} a]^{-1}b[b,_{n}a].
	\end{align}
Hence, $H$ is $n$-locally reductive if and only if $b=[b,_{n-1} a]^{-1}b[b,_{n-1}a]$ for all $a, b \in H$.  The latter equation is equivalent to $[b,_{n} a]=1$, so $H$ is an $n$-Engel subset.
\end{proof}

Example 18.3.1 of \cite{KarMer} gives a $3$-Engel group that is not nilpotent of any order.  This example, together with Proposition~\ref{reductivity and Engel prop}, Corollary \ref{red iff nilpotent}, and Example \ref{example1} of a $2$-locally reductive, non $2$-reductive quandle, show that the inclusions in \eqref{MRinclusions} are all strict. 

It turns out that Proposition \ref{reductivity and Engel prop} is sufficient to determine if any quandle is locally $n$-reductive as we next show.

\begin{proposition}\label{innred}
	Let $Q$ be a quandle. Then the following statements are equivalent.
	\begin{enumerate}
		\item[(i)] $Q\in \mathcal{LR}_{\omega}$.
		\item[(ii)] $E(Q)$ is an $n$-Engel subset of $G_Q$ for some $n\in \mathbb{N}$.
		\item[(iii)] $L(Q)$ is an $n$-Engel subset of $\lmlt{(Q)}$ for some $n\in \mathbb{N}$.
	\end{enumerate}
	In particular, if $L(Q)$ is $n$-locally reductive, then $Q$ is $(n+1)$-locally reductive.
\end{proposition}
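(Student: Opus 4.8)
The plan is to establish the cycle of implications (i)~$\Rightarrow$~(ii)~$\Rightarrow$~(iii)~$\Rightarrow$~(i), and then to read off the final (``in particular'') sentence from the argument for (iii)~$\Rightarrow$~(i). The two tools I would use are Proposition~\ref{reductivity and Engel prop}, which converts $n$-local reductivity of a conjugation quandle into an $n$-Engel condition on the underlying conjugation-closed set, and Lemma~\ref{R is cloded for}, for closure of the classes $\mathcal{LR}_n$ under homomorphic images and extensions. Running a cycle rather than proving (i)~$\Leftrightarrow$~(ii) and (i)~$\Leftrightarrow$~(iii) separately has the advantage of not needing injectivity of the canonical map $i\colon Q\to\mathrm{Conj}(G_Q)$.

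For (i)~$\Rightarrow$~(ii), suppose $Q\in\mathcal{LR}_n$. The map $i$ has image $E(Q)$, so $E(Q)\in\mathcal{LR}_n$ because $\mathcal{LR}_n$ is a variety and hence closed under homomorphic images (Lemma~\ref{R is cloded for}). The defining relations of $G_Q$ give $e_y e_x e_y^{-1}=e_{y\rhd x}$ and $e_y^{-1}e_x e_y=e_{L_y^{-1}(x)}$, so $E(Q)$ is closed under conjugation in $G_Q$; thus the conjugation quandle on $E(Q)$ is $n$-locally reductive, and Proposition~\ref{reductivity and Engel prop} yields that $E(Q)$ is an $n$-Engel subset of $G_Q$. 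For (ii)~$\Rightarrow$~(iii), recall that $e_x\mapsto L_x$ extends to a surjective group homomorphism $\varphi\colon G_Q\to\mathrm{Inn}(Q)$ with $\varphi(E(Q))=L(Q)$; applying $\varphi$ to $[b,_{n}a]=1$, valid for all $a,b\in E(Q)$, gives $[L_y,_{n}L_x]=1$ for all $L_x,L_y\in L(Q)$, and the third quandle axiom shows $L(Q)$ is conjugation-closed in $\mathrm{Inn}(Q)$ ($L_xL_yL_x^{-1}=L_{x\rhd y}$ and $L_x^{-1}L_yL_x=L_{L_x^{-1}(y)}$), so $L(Q)$ is an $n$-Engel subset of $\mathrm{Inn}(Q)$.

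For (iii)~$\Rightarrow$~(i) and the last sentence, assume $L(Q)$ is an $n$-Engel subset of $\mathrm{Inn}(Q)$. By Proposition~\ref{reductivity and Engel prop} the conjugation quandle on $L(Q)$ is $n$-locally reductive; but this quandle is exactly the image $L(Q)\cong Q/\lambda_Q$. The key small observation is then that every class $[a]_{\lambda_Q}$ is a \emph{trivial} subquandle: it is a subquandle because $x,y\in[a]_{\lambda_Q}$ forces $L_{x\rhd y}=L_xL_yL_x^{-1}=L_x$, and, writing $\sigma$ for the common value of $L_x$ over $x\in[a]_{\lambda_Q}$, the axiom $x\rhd x=x$ gives $\sigma(x)=x$ for every such $x$, so $x\rhd y=\sigma(y)=y$ on the class. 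Hence $[a]_{\lambda_Q}\in\mathcal{LR}_1$ for every $a$, and Lemma~\ref{R is cloded for} applied to the congruence $\lambda_Q$ gives $Q\in\mathcal{LR}_{n+1}$. Since this used nothing but the $n$-local reductivity of $Q/\lambda_Q=L(Q)$, it simultaneously proves the ``in particular'' clause.

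The crux of the argument is really just the triviality of the $\lambda_Q$-classes; everything else is bookkeeping and I do not anticipate a genuine obstacle. The one convention point to keep in mind is that Proposition~\ref{reductivity and Engel prop} is phrased for $\mathrm{Conj}=\mathrm{Conj}_1$, whereas $E(Q)\subseteq G_Q$ and $L(Q)\subseteq\mathrm{Inn}(Q)$ naturally appear as $(-1)$-conjugation quandles; this is immaterial, since a subset is an $n$-Engel subset of $G$ if and only if it is one of $G^{\mathrm{op}}$ and $\mathrm{Conj}_{-1}(G)=\mathrm{Conj}_1(G^{\mathrm{op}})$ (equivalently, the inductive computation in the proof of Proposition~\ref{reductivity and Engel prop} runs verbatim for $\mathrm{Conj}_{-1}$).
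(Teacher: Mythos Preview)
Your proof is correct and follows essentially the same route as the paper: the cycle (i)~$\Rightarrow$~(ii)~$\Rightarrow$~(iii) via the homomorphisms $Q\to E(Q)\to L(Q)$ and Proposition~\ref{reductivity and Engel prop}, then (iii)~$\Rightarrow$~(i) from the triviality of the $\lambda_Q$-classes together with Lemma~\ref{R is cloded for}. You are in fact more careful than the paper in verifying conjugation-closedness of $E(Q)$ and $L(Q)$, in spelling out why the $\lambda_Q$-classes are trivial, and in flagging the $\mathrm{Conj}_1$ versus $\mathrm{Conj}_{-1}$ convention.
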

\begin{proof} 
	(i) $\Rightarrow$ (ii) $\Rightarrow$ (iii) Let $Q$ be a quandle and $E(Q)=\setof{e_x}{x\in Q}$. Then the mappings
\begin{align*}
\begin{matrix}
Q\\
x
\end{matrix}~
\begin{matrix}
\to\\
\mapsto
\end{matrix}~
\begin{matrix}
E(Q)\\
e_x
\end{matrix}~
\begin{matrix}
\to\\
\mapsto
\end{matrix}~
\begin{matrix}
L(Q)\\
L_x
\end{matrix}
\end{align*}
	are quandle homomorphisms. Hence if 
	$Q$ (resp. $E(Q)$) is a $n$-locally reductive quandle, then $E(Q)$ (resp. $L(Q)$) is a $n$-locally reductive conjugation quandle. Therefore, by Proposition~\ref{reductivity and Engel prop}, $E(Q)$ (resp. $L(Q)$) is an $n$-Engel subset of $G_Q$ (resp. $\lmlt{(Q)}$). 
	
	(iii) $\Rightarrow$ (i) The quandle $L(Q)=Q/\lambda_Q$ is a $n$-locally reductive conjugation quandle and the equivalence classes of $\lambda_Q$ are trivial as quandles. Thus, by Lemma \ref{R is cloded for}, the quandle $Q$ is $(n+1)$-locally reductive.
\end{proof}

\begin{corollary}\label{n-Engel -> reductive}
	Let $Q$ be a quandle. If $\lmlt{(Q)}$ is $n$-Engel, then $Q$ is $(n+1)$-locally reductive.
\end{corollary}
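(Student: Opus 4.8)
The plan is to chain together Proposition~\ref{reductivity and Engel prop} and Proposition~\ref{innred}. Since $\lmlt{(Q)}$ is $n$-Engel, the ``in particular'' clause of Proposition~\ref{reductivity and Engel prop} tells us that the conjugation quandle ${\rm Conj}(\lmlt{(Q)})$ is $n$-locally reductive. The map $g\mapsto g^{-1}$ is a quandle isomorphism from ${\rm Conj}_{-1}(\lmlt{(Q)})$ to ${\rm Conj}(\lmlt{(Q)})$ (a one-line check: $(xyx^{-1})^{-1}=(x^{-1})^{-1}y^{-1}(x^{-1})$), so ${\rm Conj}_{-1}(\lmlt{(Q)})$ is $n$-locally reductive as well. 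Now $L(Q)=\setof{L_x}{x\in Q}$ is a subquandle of ${\rm Conj}_{-1}(\lmlt{(Q)})$ and each class $\mathcal{LR}_n$ is closed under subquandles (Lemma~\ref{R is cloded for}), so the quandle $L(Q)$ is $n$-locally reductive.

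To finish, recall that $L(Q)\cong Q/\lambda_Q$ and that every congruence class of $\lambda_Q$ is a trivial quandle, hence lies in $\mathcal{LR}_1$. Applying Lemma~\ref{R is cloded for} with $\alpha=\lambda_Q$ (or, equivalently, the final assertion of Proposition~\ref{innred}) then gives $Q\in\mathcal{LR}_{n+1}$, which is the statement of the corollary.

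I do not expect a genuine obstacle here, since the proof is just a short assembly of results already in hand. The only points needing a little care are the index bookkeeping---being $n$-Engel on the group side yields $n$-local reductivity of $L(Q)$ and only $(n+1)$-local reductivity of $Q$---and the harmless passage between the ${\rm Conj}$ and ${\rm Conj}_{-1}$ operations, which I would handle via the inversion isomorphism above. As an alternative, one can invoke Proposition~\ref{innred} even more directly: the hypothesis that $\lmlt{(Q)}$ is $n$-Engel immediately makes $L(Q)$ an $n$-Engel subset of $\lmlt{(Q)}$ (this is condition~(iii) of that proposition, with this particular $n$), and the proof of the implication (iii)$\Rightarrow$(i) already extracts the bound $Q\in\mathcal{LR}_{n+1}$ from it.
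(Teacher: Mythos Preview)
Your proof is correct and follows essentially the same route as the paper: show that $L(Q)$ sits inside an $n$-locally reductive conjugation quandle built from the $n$-Engel group $\lmlt{(Q)}$, deduce $L(Q)\in\mathcal{LR}_n$, and then invoke Proposition~\ref{innred} (or equivalently the extension argument via $\lambda_Q$) to get $Q\in\mathcal{LR}_{n+1}$. If anything, you are slightly more careful than the paper, which glosses over the ${\rm Conj}$ versus ${\rm Conj}_{-1}$ distinction that you resolve with the inversion isomorphism.
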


\begin{proof}
	The quandle $L(Q)$ embeds into the $n$-locally reductive conjugation quandle ${\rm Conj}(\lmlt{(Q)}),$ and therefore it is $n$-locally reductive. Then we can apply Proposition \ref{innred}.
\end{proof}

Let $G$ be a finite group. Then $G$ is $n$-Engel for some $n$ if and only if $G$ is nilpotent \cite{Zorn}. So, ${\rm Conj}(G)\in\mathcal{R}_{\omega}$ if and only if  ${\rm Conj}(G)\in\mathcal{LR}_{\omega}$. We next show that if $Q$ is a finite quandle (not necessarily a conjugation quandle), then $Q\in\mathcal{R}_{\omega}$ if and only if  $Q\in\mathcal{LR}_{\omega}$. 

A subset $H$ of a group $G$ is called {\it normal} if $x^{-1}Hx=H$ for all $x\in G$. The {\it Fitting subgroup} of $G$ is the subgroup generated by all nilpotent normal subgroups of $G$. The following result follows from \cite[Theorem 2.8]{Abd}.
\begin{theorem}\label{group_gen_by_n_engel}
	Let $G$ be a group that satisfies the maximal condition on
	abelian subgroups. Then every normal $n$-Engel subset of $G$  is contained in the Fitting subgroup of $G$. 
\end{theorem}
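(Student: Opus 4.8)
The plan is to reduce the statement to a nilpotency assertion about a single subgroup and then invoke \cite[Theorem~2.8]{Abd}. Let $H$ be a normal $n$-Engel subset of $G$ and set $N=\langle H\rangle$. Since $x^{-1}Hx=H$ for every $x\in G$, we get $x^{-1}Nx=\langle x^{-1}Hx\rangle=N$, so $N$ is a normal subgroup of $G$. The maximal condition on abelian subgroups is inherited by subgroups, so $N$ also satisfies it. By definition the Fitting subgroup of $G$ is generated by, and in particular contains, every nilpotent normal subgroup of $G$; hence it is enough to prove that $N$ is nilpotent, for then $H\subseteq N\subseteq\mathrm{Fit}(G)$.

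This turns the whole problem into: a group that is generated by a normal $n$-Engel subset and satisfies the maximal condition on abelian subgroups is nilpotent. This is precisely the sort of conclusion supplied by the Engel-plus-chain-condition results in the tradition of Baer and Gruenberg, and I would quote \cite[Theorem~2.8]{Abd} for it. If that reference is phrased instead in terms of individual Engel \emph{elements}, then one additionally needs the auxiliary fact that each element $h$ of a normal $n$-Engel subset is a left Engel element of $G$, i.e. that the left-normed commutators $[g,h],\ [[g,h],h],\dots$ eventually vanish for every $g\in G$; the natural entry point is the identity $[g,h]=(h^{g})^{-1}h$ with $h^{g}\in H$ by normality, though turning this into a vanishing statement again leans on the chain condition, so it is no shortcut.

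The main obstacle is exactly the nilpotency of $N$: this is the substantive part of the theorem and the only place the hypothesis on abelian subgroups enters. Some finiteness hypothesis of this kind is genuinely needed, since $n$-Engel groups need not be nilpotent in general (see \cite[Example~18.3.1]{KarMer}). Everything else in the argument — the normality of $N$, the descent of the chain condition to $N$, and the final inclusion $N\subseteq\mathrm{Fit}(G)$ — is routine.
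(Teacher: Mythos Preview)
Your approach is correct and essentially matches the paper's: the paper does not give a proof at all but simply records that the statement ``follows from \cite[Theorem~2.8]{Abd}.'' Your write-up just makes the routine reduction explicit (passing to $N=\langle H\rangle$, noting that $N$ is normal in $G$ and inherits the maximal condition on abelian subgroups, and then citing \cite{Abd} for the nilpotency of $N$), which is a reasonable unpacking of the one-line citation.
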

This theorem implies the following statement.
\begin{theorem}
	\label{finite M=R}
	Let $Q$ be a finite quandle. Then $Q$ is reductive if and only if $Q$ is locally reductive. \end{theorem}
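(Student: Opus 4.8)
The plan is to prove the nontrivial direction — that a finite locally reductive quandle is reductive — by converting local reductivity into an Engel condition on $\mathrm{Inn}(Q)$ and then invoking Theorem~\ref{group_gen_by_n_engel} to force $\mathrm{Inn}(Q)$ to be nilpotent. The converse implication is immediate: every reductive quandle is locally reductive by \eqref{MRinclusions}.

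So assume $Q$ is finite and $Q\in\mathcal{LR}_{\omega}$. First I would apply Proposition~\ref{innred} to obtain that $L(Q)$ is an $n$-Engel subset of $\mathrm{Inn}(Q)$ for some $n\in\mathbb{N}$. Next I would observe that $L(Q)$ is a \emph{normal} subset of $\mathrm{Inn}(Q)$: from the third quandle axiom $L_aL_xL_a^{-1}=L_{a\rhd x}\in L(Q)$ for all $a,x\in Q$, so $L(Q)$ is stable under conjugation by each generator $L_a$ of $\mathrm{Inn}(Q)$, hence under conjugation by all of $\mathrm{Inn}(Q)$.

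Since $Q$ is finite, $\mathrm{Inn}(Q)\leq\mathrm{Sym}(Q)$ is a finite group and therefore trivially satisfies the maximal condition on abelian subgroups. Theorem~\ref{group_gen_by_n_engel} then yields $L(Q)\subseteq F$, where $F$ denotes the Fitting subgroup of $\mathrm{Inn}(Q)$. Because $\mathrm{Inn}(Q)=\langle L(Q)\rangle$ and $F$ is a subgroup, this forces $\mathrm{Inn}(Q)=F$; and the Fitting subgroup of a finite group is nilpotent, so $\mathrm{Inn}(Q)$ is nilpotent of some class $c$. By Proposition~\ref{strong red and inn}(iii) this means $Q\in\mathcal{R}_{c+1}$, hence $Q$ is reductive.

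The only non-routine point — and thus the main obstacle — is seeing that Theorem~\ref{group_gen_by_n_engel} applies here: one must identify $L(Q)$ as a normal $n$-Engel subset (the normality being the small computation above) and note that finiteness supplies the maximal condition hypothesis for free. Everything after ``$L(Q)\subseteq F$'' is formal. A minor care point: reductivity is detected by $\mathrm{Inn}(Q)$ via Proposition~\ref{strong red and inn}, not by the transvection group $\mathrm{Trans}(Q)$ as in Theorem~\ref{solvability_for_quandles}, so one should cite the former.
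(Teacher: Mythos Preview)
Your proof is correct and follows essentially the same route as the paper: use Proposition~\ref{innred} to see that $L(Q)$ is a normal $n$-Engel subset of $\mathrm{Inn}(Q)$, invoke Theorem~\ref{group_gen_by_n_engel} (finiteness supplying the maximal condition) to get $\mathrm{Inn}(Q)$ equal to its Fitting subgroup, hence nilpotent, and conclude via Proposition~\ref{strong red and inn}. Your write-up is in fact slightly more explicit than the paper's, spelling out the normality of $L(Q)$ and the exact reductivity class.
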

	
\begin{proof}If $Q$ is reductive, then it is obviously locally reductive (since $\mathcal{R}_{\omega}\subset\mathcal{LR}_{\omega}$). Suppose that $Q$ is a finite $n$-locally reductive quandle. According to Proposition \ref{innred}, the group $\lmlt{(Q)}$ is generated by the normal $n$-Engel subset $\setof{L_a}{a\in Q}$, and since $\lmlt{(Q)}$ is finite, it satisfies the maximal condition on abelian subgroups. From Theorem \ref{group_gen_by_n_engel}, it follows that ${\rm Inn}(Q)$ coincides with its Fitting subgroup. Since ${\rm Inn}(Q)$ is finite, its Fitting subgroup is nilpotent, and therefore ${\rm Inn}(Q)$ is nilpotent. Thus, by Proposition~\ref{strong red and inn}, $Q$ is reductive.
\end{proof}
In particular, by Corollary \ref{trivialreductive}, if a finite locally reductive quandle is either faithful or connected, it must be $T_1$. 

Similarly, for medial quandles, reductivity and local reductivity are equivalent  \cite{JPSZ}.  Therefore, only $T_1$ is medial, connected, and locally reductive. Hence, all the medial connected quotients of a locally reductive quandle are trivial.  From this observation, we can provide the following proposition about the transvection group of connected locally reductive quandles.

\begin{proposition}\label{perfect}
	Let $Q$ be a connected locally reductive quandle. Then $Q$ has no finite quotients and $\dis(Q)$ is a perfect group, i.e., $\dis(Q)=[\dis(Q),\dis(Q)]$.
\end{proposition}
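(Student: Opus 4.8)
The plan is to treat the two claims separately, each time reducing to a quotient and quoting a result from above. \emph{No nontrivial finite quotients.} Suppose $\alpha\in\mathrm{Con}(Q)$ and $Q/\alpha$ is finite. Since $Q$ is connected, $Q/\alpha$ is connected as well: by \eqref{image of an orbit} (with $N=\lmlt(Q)$, using that $\pi_\alpha$ is onto) the class $[a]_\alpha$ satisfies $\mathrm{Orb}([a]_\alpha,Q/\alpha)=\{[b]_\alpha:b\in\mathrm{Orb}(a,Q)\}=Q/\alpha$. Moreover $Q/\alpha$ is locally reductive, because $\mathcal{LR}_{\omega}$ is closed under homomorphic images (Lemma~\ref{R is cloded for}). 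By Theorem~\ref{finite M=R} a finite locally reductive quandle is reductive, and by Corollary~\ref{trivialreductive} a connected reductive quandle equals $T_1$; hence $Q/\alpha=T_1$, so $Q$ has no finite quotient other than $T_1$.

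\emph{Perfectness of $\dis(Q)$.} Set $N=[\dis(Q),\dis(Q)]$. Then $N\in\mathrm{Norm}(Q)$: it is contained in $\dis(Q)$, and as a characteristic subgroup of the $\lmlt(Q)$-normal subgroup $\dis(Q)$ it is normal in $\lmlt(Q)$. I claim $Q/\mathcal{O}_N$ is medial, connected, and locally reductive. Connectedness and local reductivity follow as above (homomorphic image). For mediality: every $n\in N$ fixes each $\mathcal{O}_N$-class by definition of $\mathcal{O}_N$, so $\pi_{\mathcal{O}_N}(n)=\mathrm{id}$ by \eqref{piprop}; thus $\pi_{\mathcal{O}_N}$ vanishes on $N$, and $\dis(Q/\mathcal{O}_N)=\pi_{\mathcal{O}_N}(\dis(Q))$ is a homomorphic image of the abelian group $\dis(Q)/N$, hence abelian, i.e.\ $Q/\mathcal{O}_N$ is medial. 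Since the only connected medial locally reductive quandle is $T_1$ (recorded just above the proposition), we conclude $Q/\mathcal{O}_N=T_1$, i.e.\ $\mathcal{O}_N=1_Q$, so $N$ acts transitively on $Q$.

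It remains to turn transitivity of $N$ into the equality $\dis(Q)=N$. For this I would record the general fact that $\dis_{\mathcal{O}_M}\leq M$ for every $M\in\mathrm{Norm}(Q)$: if $a\,\mathcal{O}_M\,b$ then $b=m(a)$ for some $m\in M$, and from the conjugation identity $L_{m(a)}=mL_am^{-1}$ (which follows from axiom (iii) by induction on word length) one gets $L_bL_a^{-1}=m\,(L_am^{-1}L_a^{-1})\in M$, using $M\trianglelefteq\lmlt(Q)$; hence every generator of $\dis_{\mathcal{O}_M}$ lies in $M$. Taking $M=N$ and recalling $\dis_{1_Q}=\dis(Q)$ together with $\mathcal{O}_N=1_Q$, we obtain
\[
\dis(Q)=\dis_{1_Q}=\dis_{\mathcal{O}_N}\leq N=[\dis(Q),\dis(Q)],
\]
so $\dis(Q)$ is perfect.

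The argument involves essentially no computation; the only genuine ideas are (i) applying the ``no connected medial quotients'' principle to the particular congruence $\mathcal{O}_{[\dis(Q),\dis(Q)]}$, and (ii) noticing the inequality $\dis_{\mathcal{O}_N}\leq N$, which is precisely what prevents the conclusion from degenerating into the weaker ``some proper normal subgroup of $\dis(Q)$ is transitive'' (which can indeed happen for general connected quandles) and instead forces $\dis(Q)$ itself to equal its derived subgroup. I expect identifying step (ii) to be the main, if modest, obstacle.
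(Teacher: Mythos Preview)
Your proof is correct and follows essentially the same route as the paper: reduce to the quotient $Q/\mathcal{O}_N$ with $N=[\dis(Q),\dis(Q)]$, use that a connected medial locally reductive quandle is $T_1$ to get $\mathcal{O}_N=1_Q$, and then use $\dis_{\mathcal{O}_N}\leq N$ to conclude. The only difference is that the paper outsources the mediality of $Q/\mathcal{O}_N$ to \cite{BiaBon} and leaves $\dis_{\mathcal{O}_N}\leq N$ implicit (it is in \cite{BonSta}), whereas you supply both arguments directly; your self-contained treatment is a nice bonus.
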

\begin{proof} Finite locally reductive quandles are not connected, so $Q$ has no finite quotients.

According to \cite{BiaBon}, the quotient of $Q$ with respect to the congruence $\alpha=\mathcal{O}_{[\dis(Q),\dis(Q)]}$  is a connected medial locally reductive quandle. It follows from Theorem~\ref{finite M=R} that $Q/\alpha = T_1$. Therefore, $\dis_\alpha=\dis(Q)\leq [\dis(Q),\dis(Q)],$ so  $\dis(Q)=[\dis(Q),\dis(Q)]$.

\end{proof}

\section{Orbit series in quandles}\label{orbitserrrr}

\subsection{Definitions and examples} Let $Q=Q_0$ be a quandle and $x_0 \in Q_0$. Let $Q_1={\rm Orb}(x_0,Q)$ and choose an element $x_1 \in Q_1$. Continuing in the same way (choosing $x_{i+1} \in Q_{i+1} = {\rm Orb}(x_{i}, Q_i)$, letting $Q_{i+2}={\rm Orb}(x_{i+1}, Q_{i+1})$, and so on), we obtain a sequence of subquandles:
\begin{equation}\label{os}
Q=Q_0\geq Q_1\geq Q_2\geq Q_3\geq \dots
\end{equation}

Such a series of subquandles is called the {\it orbit series} defined by the elements $\setof{x_i}{i\in \mathbb{N}}$. Note that the same orbit series can be obtained by different sequences of elements.
If an orbit series arises from a constant sequence of elements, namely all $x_i=x$ for some $x\in Q$, then we say that this orbit series is the {\it principal orbit series} of $x$. A quandle $Q$ has a unique orbit series if and only if it is connected. We will use the following terminology:
\begin{itemize}
	\item A quandle $Q$ satisfies the {\it descending orbit series condition} if each orbit series as in \eqref{os} stabilizes, i.e., there exists $k\in \mathbb{N}$ such that  $Q_k=Q_{k+1}$. We denote by $\OC$ the class of quandles which satisfy this condition. 
	\item A quandle $Q$ satisfies the {\it $n$-bounded descending orbit series condition} if each orbit series as in \eqref{os} stabilizes in $n$ or fewer steps, i.e.,
	there exists $k\leq n$ such that  $Q_k=Q_{k+1}$. We denote the class of quandles which satisfy this condition by $\OC_n$ and let $\OC_{\omega}=\bigcup_{n=1}^{\infty}\OC_n$.
	\item A quandle $Q$ satisfies the {\it trivializing orbit series condition} if each orbit series as in \eqref{os} stabilizes on $T_1$, i.e., there exists $k\in \mathbb{N}$ such that $Q_k = T_1$. The class of all quandles which satisfy this condition will be denoted by $\dOC$.
	\item 
	A quandle $Q$ satisfies the {\it $n$-bounded trivializing orbit series condition} if for each orbit series as in \eqref{os}, there exists $k\leq n$ such that  $Q_k = T_1$. The class of quandles which satisfy this condition is called $\dOC_n$  and  $\dOC_{\omega}=\bigcup_{n=1}^{\infty}\dOC_n$.
\end{itemize}
It is clear that every finite quandle satisfies the orbit series condition. The following inclusions follow from the definitions:
\begin{equation}\label{conditionsinclusions}
\begin{matrix}
\dOC_1&\subset&\dOC_2&\subset&\dots&\subset&\dOC_{\omega}&\subset&\dOC\\
\cap&~&\cap&~&~&~&\cap&~&\cap\\
\OC_1&\subset&\OC_2&\subset&\dots&\subset&\OC_{\omega}&\subset&\OC
\end{matrix}
\end{equation}
The following examples show that all inclusions in \eqref{conditionsinclusions} are strict.

\begin{example}\label{dihgood}  The dihedral quandle $D_{2n}$ is a union of two orbits, each of which is isomorphic to $D_n$  \cite[Proposition 3.2]{BarNas}.  Therefore, for $1\leq k\leq n$, the $k$-th member of every orbit series of $D_{2^n}$ is isomorphic to $D_{2^{n-k}}$.  This means that $D_{2^n}$ belongs to $\dOC_{n}\setminus\dOC_{n-1}$ and to $\OC_{n}\setminus\OC_{n-1}$. So, the inclusions $\OC_{n-1} \subset \OC_{n}$, $\OC_n\subset \OC_{\omega}$, $\dOC_{n-1}\subset \dOC_{n}$, and $\dOC_n\subset \dOC_{\omega}$ are strict.
\end{example}

\begin{example}\label{omegastrict}
There is a natural arbitrary direct sum operation on quandles.  Namely, given a set $\mathcal{S}$ of quandles, relabeled if necessary so that their underlying sets are disjoint, then $\bigsqcup \mathcal{S}$ is the quandle on the union of the underlying sets with operation given by:
$$x \rhd y =  \left\{ \begin{array}{ll}
x \rhd_Q y & \textrm{ if }  \exists Q \in \mathcal{S} \textrm{ such that } x, y \in Q \\
y  & \textrm{ otherwise } 
\end{array} \right. $$
The disjoint union  $Q=\bigsqcup_{n\geq 1} D_{2^n}$ belongs to $\dOC\setminus\dOC_{\omega}$ and so  the inclusions $\OC_{\omega}\subset \OC$ and $\dOC_{\omega}\subset \dOC$ are strict.
\end{example}
\begin{example}
Every connected quandle belongs to $\OC_1$. If $Q$ is a connected non-trivial quandle, then it has a unique orbit series $Q=Q=\dots$, which of course does not stabilize on $T_1$. Therefore, the inclusions $\dOC_n\subset \OC_n$, $\dOC_{\omega}\subset \OC_{\omega}$, and $\dOC\subset \OC$ are strict.
\end{example}

The orbit conditions defined above can be captured by the properties of the following tree.  Let $Q$ be a quandle. Denote by $V(Q)$ the following inductively defined set of subquandles of $Q$;
\begin{enumerate}
	\item[(i)] $Q\in V(Q)$, and
	\item[(ii)] if $P\in V(Q)$ and $R$ is an orbit in $P$, then $R\in V(Q)$.
\end{enumerate}

The graph $\Gamma(Q)$ has vertices $V(Q)$ and edges $\setof{(P,R)}{P,R\in V(Q),\,  R \neq P \,\text{ is an orbit in } P}$. It is clear that $\Gamma(Q)$ is a tree with root $Q$ and that every leaf $P$ is a connected subquandle of $Q$: If $P$ is not connected, then it has an orbit $R$ which does not coincide with $P$ and then we have an edge $(P,R)$.  If the tree has finite depth, then the leaves are the maximal connected subquandles of $Q$. We call $\Gamma(Q)$ the {\it orbit tree} of the quandle $Q$.

\begin{example}
The following are examples of orbit trees.
\begin{itemize}

\item If $Q$ is a connected quandle, then the graph $\Gamma(Q)$ has only one vertex $Q$.

\item For the trivial quandle $T_n=\{x_1,\dots,x_n\}$, we have: 
	$$ \xymatrixrowsep{0.2in}
	\xymatrixcolsep{0.1in}
	\xymatrix{
 &  & T_n  \ar@{-}[d]\ar@{-}[drr]\ar@{-}[dll] \ar@{-}[dr]\ar@{-}[dl]  &  & \\
\{x_1\} & \{x_2\} & \ldots  & \ldots & \{x_n\}
}
	$$

\item For the dihedral quandle  $D_4$, we have:   
	$$ \xymatrixrowsep{0.2in}
	\xymatrixcolsep{0.00001in}
	\xymatrix{
& & & C_4 \ar@{-}[drr]\ar@{-}[dll] \\
& 0+2C_4\ar@{-}[dr]\ar@{-}[dl]  & & & &1+2C_4\ar@{-}[dr]\ar@{-}[dl]  \\
\{0\}& & \{2\} & & \{1\}& &\{3\}  \\
}
	$$

\end{itemize}

\end{example}

The following statement recasts the definitions of the classes $\OC,\OC_n,\dOC,$ and $\dOC_n$ in terms of the orbit tree. 
\begin{proposition}\label{intuition} Let $Q$ be a quandle. Then
\begin{enumerate}
	\item[(i)] $Q\in \OC$ if and only if each branch of $\Gamma(Q)$ has finite depth.
	\item[(ii)] $Q\in \OC_n$ if and only if each branch of $\Gamma(Q)$ has depth which is less then or equal to $n$.
	\item[(iii)] $Q\in \dOC$ if and only if $Q \in \OC$ and every leaf is $T_1$.
	\item[(iv)] $Q\in \dOC_n$ if and only if $Q \in \OC_n$ and every leaf is $T_1$.
\end{enumerate}

\end{proposition}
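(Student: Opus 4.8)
The plan is to establish a dictionary between the orbit series of $Q$ and the branches of the orbit tree $\Gamma(Q)$ — where by a \emph{branch} I mean a maximal downward path starting at the root, and by its \emph{depth} the number of its edges — and then obtain (i)--(iv) by simply translating the defining conditions.

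The one substantive step I would carry out is the correspondence itself. Given an orbit series $Q = Q_0 \ge Q_1 \ge Q_2 \ge \cdots$ as in \eqref{os}, first note that if $Q_{i+1} = Q_i$ then $Q_i = {\rm Orb}(x_i, Q_i)$ is connected, and a connected quandle has only the constant orbit series, so $Q_i = Q_{i+1} = Q_{i+2} = \cdots$. Hence the series strictly decreases up to some (possibly infinite) index and is constant thereafter, and its distinct terms $Q_0 > Q_1 > \cdots$ form a path from the root of $\Gamma(Q)$, each $Q_{i+1}$ being a proper orbit of $Q_i$. If the series stabilizes, this path is finite and maximal — its last term $Q_d$ is connected, hence a leaf — and the least $k$ with $Q_k = Q_{k+1}$ equals the depth $d$ of that branch; if the series does not stabilize, the path is an infinite branch. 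Conversely, every branch $Q = P_0 > P_1 > \cdots$ arises in this way: choosing $x_i \in P_{i+1}$ gives ${\rm Orb}(x_i, P_i) = P_{i+1}$, and setting $Q_i = P_i$ (continuing constantly at the leaf when the branch is finite) yields an orbit series realizing it. Thus an orbit series stabilizes iff the associated branch is finite, and it stabilizes within $n$ steps iff that branch has depth at most $n$.

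With this dictionary in hand, (i) and (ii) are immediate rephrasings of the definitions of $\OC$ and $\OC_n$. For (iii) and (iv) I would use the elementary facts that $T_1$ is the unique one-element quandle, that it is (trivially) connected, and that it is absorbing, i.e. ${\rm Orb}(x, T_1) = T_1$; consequently an orbit series reaches $T_1$ exactly when it stabilizes on $T_1$, and the term on which a stabilizing orbit series becomes constant is precisely the leaf of its branch. Hence ``every orbit series stabilizes on $T_1$'' is equivalent to ``$Q \in \OC$ and every leaf of $\Gamma(Q)$ is $T_1$'', which is (iii). For (iv) the same reasoning applies, using additionally that on a branch ending at the leaf $T_1$ no earlier (strictly larger) term can equal $T_1$, so the associated orbit series first meets $T_1$ exactly at the depth of the branch; this gives $Q \in \dOC_n$ iff $Q \in \OC_n$ and every leaf is $T_1$.

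I do not expect a genuinely hard step: the proof is essentially bookkeeping. The points needing the most care are matching the least stabilization index of an orbit series with the depth of its branch (and accounting for the arbitrary constant tail of a stabilizing series), and recording explicitly the two small facts about $T_1$ — connected, and absorbed by every orbit operation — that make ``stabilizes on $T_1$'' and ``reaches $T_1$'' the same condition.
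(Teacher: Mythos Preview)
Your proposal is correct. In fact, the paper does not supply a proof of this proposition at all: it is introduced with the sentence ``The following statement recasts the definitions of the classes $\OC,\OC_n,\dOC,$ and $\dOC_n$ in terms of the orbit tree'' and then left to the reader, so your careful bookkeeping (matching orbit series with maximal root-paths, identifying the least stabilization index with branch depth, and noting that the stable term is exactly the leaf) makes explicit precisely what the paper treats as self-evident.
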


Intuitively, the families $\dOC$ and $\dOC_n$  are assembled from copies of $T_1$ in a finitary way.  Similarly, quandles from the families $\OC$ and $\OC_n$ are assembled from connected quandles.

We now characterize principal orbit series.
\begin{proposition}\label{easieros2} Let $Q$ be a quandle and $U=\setof{Q_i}{i\in \mathbb{N}}$ be an orbit series. Then $U$ is the principal orbit series of $x$ if and only if  $x\in \bigcap_{i\in \mathbb{N}} Q_i$.
\end{proposition}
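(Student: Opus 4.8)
The plan is to prove both directions directly from the construction of an orbit series. Recall that the orbit series $U = \setof{Q_i}{i \in \mathbb{N}}$ is built from a sequence of elements $x_i$ with $x_i \in Q_i$ and $Q_{i+1} = {\rm Orb}(x_i, Q_i)$, while the principal orbit series of $x$ is the one obtained from the constant sequence $x_i = x$ for all $i$.

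For the ``only if'' direction, suppose $U$ is the principal orbit series of $x$. Then $x_i = x$ for all $i$, and since $x_i \in Q_i$ by construction, we immediately get $x \in Q_i$ for every $i$, hence $x \in \bigcap_{i \in \mathbb{N}} Q_i$. This direction is essentially immediate.

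For the ``if'' direction, suppose $x \in \bigcap_{i \in \mathbb{N}} Q_i$, so that $x \in Q_i$ for every $i$. The key observation is that an orbit series is entirely determined by the sequence of subquandles $Q_i$, not by the particular choice of elements generating it: whenever $y \in Q_i$, the subquandle ${\rm Orb}(y, Q_i)$ depends only on which orbit of $Q_i$ contains $y$. So I want to show that the constant sequence $x_i = x$ also generates the series $U$, i.e., that $Q_{i+1} = {\rm Orb}(x, Q_i)$ for all $i$. I would argue by induction on $i$. For the inductive step, I know $Q_{i+1} = {\rm Orb}(x_i, Q_i)$ where $x_i$ is the element originally used. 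Now $x \in Q_{i+1}$ (since $x$ lies in the intersection), and $Q_{i+1}$ is an orbit of $Q_i$; since the orbits of $Q_i$ partition $Q_i$, the unique orbit of $Q_i$ containing $x$ is exactly $Q_{i+1}$, i.e., ${\rm Orb}(x, Q_i) = Q_{i+1}$. This shows the constant sequence $(x, x, x, \dots)$ produces precisely the subquandles $Q_0 \geq Q_1 \geq Q_2 \geq \cdots$, which is the definition of $U$ being the principal orbit series of $x$.

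The one point requiring a little care — and the closest thing to an obstacle — is the base case and the bookkeeping that ``two element-sequences yielding the same chain of subquandles define the same orbit series.'' The base case $Q_1 = {\rm Orb}(x_0, Q_0) = {\rm Orb}(x, Q_0)$ follows because $x \in Q_1$ and $Q_1$ is an orbit of $Q_0 = Q$; again using that orbits partition the quandle, $Q_1$ is the orbit of $x$ in $Q$. Then the induction carries through uniformly. The whole argument rests on the single structural fact that the orbits of a quandle form a partition of it (so ``the orbit containing a given element'' is well-defined), which is standard and already used implicitly in the setup.
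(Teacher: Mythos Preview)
Your proof is correct and is essentially the same as the paper's: both directions use the same idea, namely that since $x \in Q_{i+1} = {\rm Orb}(x_i, Q_i)$ and orbits partition $Q_i$, we must have ${\rm Orb}(x, Q_i) = Q_{i+1}$. The only difference is cosmetic --- you frame the ``if'' direction as an induction, whereas the paper simply observes the equality ${\rm Orb}(x,Q_i)={\rm Orb}(x_i,Q_i)=Q_{i+1}$ directly for every $i$ (the induction is unnecessary because the argument for each $i$ does not use the conclusion for smaller indices).
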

\begin{proof}
	Clearly, if $U$ is the principal orbit series of $x$, then $x\in \bigcap_{i\in\mathbb{N}} Q_i$. 	Conversely, let $x$ be an arbitrary element from $\bigcap Q_i$. Since $Q_{i+1}={\rm Orb}(x_i,Q_i)$ and $x\in Q_{i+1}$, the orbit of $x_i$ in $Q_i$ and the orbit of $x$ in $Q_i$ must coincide. So, we have ${\rm Orb}(x,Q_i)={\rm Orb}(x_i,Q_i)=Q_{i+1}$.
\end{proof}

If $Q$ satisfies the descending orbit series condition, then each orbit series in $Q$ stabilizes. Therefore, each orbit series has non-trivial intersection and Proposition~\ref{easieros2} has the following corollary.
\begin{corollary}\label{easieros} Let $Q\in \OC$. Then every orbit series in $Q$ is principal.
\end{corollary}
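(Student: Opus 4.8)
The plan is to deduce this directly from Proposition~\ref{easieros2}: it suffices to show that for any orbit series $U=\setof{Q_i}{i\in\mathbb N}$ in $Q$, the intersection $\bigcap_{i\in\mathbb N}Q_i$ is nonempty. Indeed, once we produce an element $x\in\bigcap_i Q_i$, Proposition~\ref{easieros2} tells us that $U$ is the principal orbit series of $x$, which is exactly what we want.

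To produce such an $x$, I would use the hypothesis $Q\in\OC$. By the definition of the descending orbit series condition, the series $Q=Q_0\geq Q_1\geq Q_2\geq\cdots$ stabilizes: there is some $k\in\mathbb N$ with $Q_k=Q_{k+1}$. I would first observe that stabilization at step $k$ forces $Q_j=Q_k$ for all $j\geq k$. This is because $Q_{k+1}={\rm Orb}(x_k,Q_k)$, and $Q_{k+1}=Q_k$ means $Q_k$ is connected (it equals the orbit of one of its elements); then $Q_{k+2}={\rm Orb}(x_{k+1},Q_{k+1})=Q_{k+1}$ since any orbit in a connected quandle is the whole quandle, and inductively $Q_j=Q_k$ for all $j\geq k$. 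In particular $Q_k$ is nonempty (it contains $x_k$), so pick any $x\in Q_k$; then $x\in Q_j$ for every $j\geq k$, and since $Q_0\geq Q_1\geq\cdots\geq Q_k\ni x$ the chain is descending, we also have $x\in Q_j$ for every $j<k$. Hence $x\in\bigcap_{i\in\mathbb N}Q_i$.

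Applying Proposition~\ref{easieros2} to this $x$ then shows $U$ is the principal orbit series of $x$. Since $U$ was an arbitrary orbit series in $Q$, every orbit series in $Q$ is principal, as claimed.

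I do not expect any real obstacle here; the only point needing a word of care is the observation that once an orbit series stabilizes at step $k$ it is constant thereafter, which is precisely the remark (already made in the text preceding the corollary) that ``each orbit series in $Q$ stabilizes'' yields ``non-trivial intersection'' — so this corollary is essentially a one-line consequence of Proposition~\ref{easieros2} once that remark is unwound. If one prefers to avoid even that small argument, one can simply cite the sentence immediately preceding the corollary, which already asserts that each orbit series has non-trivial intersection, and then invoke Proposition~\ref{easieros2} directly.
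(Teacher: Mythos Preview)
Your proof is correct and follows essentially the same route as the paper: the paper's argument is the one-line observation that stabilization of the orbit series gives a nonempty intersection, whence Proposition~\ref{easieros2} applies. You have merely spelled out the ``stabilization $\Rightarrow$ nonempty intersection'' step in detail (via the connectedness of $Q_k$), which is exactly the content the paper leaves implicit.
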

Every finite quandle satisfies the descending orbit series condition, leading to:
\begin{corollary} Let $Q$ be a finite quandle. Then every orbit series in $Q$ is principal.
\end{corollary}
In some infinite quandles, there are orbit series that have empty intersection. So there are quandles with non-principal orbit series; we give a concrete example.
\begin{example}\label{notOC} Let $Q={\rm Alex}(\Z,-)$ be the Takasaki quandle on $\mathbb{Z}$, also known as the infinite dihedral quandle. The operation in $Q$ is given by $a\rhd b=2a-b$ for integers $a$ and $b$. If $a\in Q$, then the orbit of $a$ has the form 
	$${\rm Orb}(a,Q)=\{2b-a~|~b\in \mathbb{Z}\}=\{c~|~c~\text{and}~a~\text{have the same parity}\}.$$
	Therefore, $Q$ has two orbits: ${\rm Orb}(0,Q)=\{\text{even numbers}\}$ and ${\rm Orb}(1,Q)=\{\text{odd numbers}\}$. From the equations $$(2a)\rhd (2b)=2(2a-b) \quad \textrm{and} \quad (2a+1)\rhd (2b+1)=(2(2a-b)+1)$$
	it follows that each of these two orbits is isomorphic to $Q$. By the same argument, the orbits within these orbits will again be isomorphic to $Q$. So if we construct an orbit series $Q=Q_0\geq Q_1\geq Q_2\geq \dots$, then each of the quandles $Q_i$ will be isomorphic to $Q$. In fact, by direct computation, the orbit tree of $Q$ is as follows:  
	\begin{footnotesize}
	
	$$ \xymatrixrowsep{0.2in}
	\xymatrixcolsep{0.00001in}
	\xymatrix{
& & & & & & & \Z  \ar@{-}[drrrr]\ar@{-}[dllll]& & & & & & & &\\
& & & 0+2\Z \ar@{-}[drr]\ar@{-}[dll] & & & & & & & & 1+2\Z \ar@{-}[drr]\ar@{-}[dll]  & & &\\
& 0+4\Z\ar@{-}[dr]\ar@{-}[dl]  & & & &2+4\Z\ar@{-}[dr]\ar@{-}[dl]  && & & 1+4\Z\ar@{-}[dr]\ar@{-}[dl] & & &&  3+4\Z\ar@{-}[dr]\ar@{-}[dl] & \\
0+8\Z \ar@{.}[d] & & 4+8\Z \ar@{.}[d] & & 2+8\Z \ar@{.}[d]& &6+8\Z \ar@{.}[d]& &1+8\Z \ar@{.}[d]& &5+8\Z \ar@{.}[d]& &3+8\Z \ar@{.}[d]& &7+8\Z\ar@{.}[d] \\
 & & & & & & & & & & & & & & 
}
	$$
	\end{footnotesize}

The orbit series of $Q$ are thus in one-to-one correspondence with the $2$-adic integers $\Z_2$.  Recall that $\Z_2$ is the subring of $\prod_{n\geq 1}\Z/2^n\Z$ given by the elements $(a_n)_{n\geq 1}$ such that $a_{n}=\rho_{n+1}(a_{n+1})$ for every $n\geq 1$, where $\rho_{n+1}$ is the canonical map making the following diagram commute
$$\xymatrix{ \Z\ar[r]\ar[d] & \Z/2^{n}\Z\\
\Z/2^{n+1}\Z\ar[ru]^{\rho_{n+1}} & 
}$$
The integers embed into $\Z_2$ as the eventually constant sequences. In particular, $x\in\Z$ is in the intersection of the orbit series $\setof{a_n+2^n \Z}{n\in \mathbb{N}}$ if and only if, in $\Z_2$, $x=(a_1,a_2,\ldots)$. So choose any $2$-adic integer not in $\Z$, for instance $1/3 = (1, 3, 3, 11, 11, 43, 43,\ldots)$. The intersection of the orbit series corresponding to $1/3$ is empty, and so this orbit series cannot be principal.
\end{example}

\section{Orbit series and the structure of quandles}\label{section about properties}

\subsection{Closure properties}\label{subquandles and so on} Here we study the behavior of the families $\OC$, $\OC_n$, $\dOC$, and $\dOC_n$ under taking subquandles, homomorphic images, direct products, and extensions.
\begin{lemma}\label{series of factor}Let $Q$ be a quandle and $\alpha\in {\rm Con}(Q)$. Let $\setof{Q_n}{n\in\mathbb{N}}$ and $\setof{(Q/\alpha)_n}{n\in\mathbb{N}}$ be the orbit series determined by the sequences $\setof{a_n}{n\in\mathbb{N}}$ and  $\setof{[a_n]_\alpha}{n\in\mathbb{N}}$, respectively. Then for all $n, (Q/\alpha)_n= Q_n/\alpha$.
\end{lemma}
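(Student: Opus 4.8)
The plan is to induct on $n$, with the key mechanism being that a surjective quandle homomorphism carries the orbit of a point onto the orbit of its image. This is precisely equation \eqref{image of an orbit} applied with $N$ the full inner automorphism group: if $f\colon A\to B$ is surjective with kernel congruence $\beta$, then $\setof{[h(a)]_\beta}{h\in{\rm Inn}(A)}=\setof{\pi_\beta(h)([a]_\beta)}{h\in{\rm Inn}(A)}$, and since $\pi_\beta$ surjects onto ${\rm Inn}(A/\beta)\cong{\rm Inn}(B)$ this set is exactly ${\rm Orb}(f(a),B)$; thus $f({\rm Orb}(a,A))={\rm Orb}(f(a),B)$. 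The base case $n=0$ of the induction is immediate, since $(Q/\alpha)_0=Q/\alpha=Q_0/\alpha$.

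For the inductive step, assume $(Q/\alpha)_n=Q_n/\alpha$, where $Q_n/\alpha$ denotes the subquandle $\setof{[b]_\alpha}{b\in Q_n}$ of $Q/\alpha$, i.e. the image of $Q_n$ under the canonical surjection $p\colon Q\to Q/\alpha$. Restricting $p$ yields a surjective quandle homomorphism $Q_n\to (Q/\alpha)_n$ carrying $a_n$ to $[a_n]_\alpha$. By the definitions of the two orbit series, $Q_{n+1}={\rm Orb}(a_n,Q_n)$ and $(Q/\alpha)_{n+1}={\rm Orb}([a_n]_\alpha,(Q/\alpha)_n)$. Applying the mechanism above to the homomorphism $p\colon Q_n\to (Q/\alpha)_n$ gives
$$Q_{n+1}/\alpha=p\bigl({\rm Orb}(a_n,Q_n)\bigr)={\rm Orb}\bigl([a_n]_\alpha,(Q/\alpha)_n\bigr)=(Q/\alpha)_{n+1},$$
which closes the induction.

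The one subtlety to handle with care is that the orbit defining $Q_{n+1}$ is taken \emph{inside} the subquandle $Q_n$ with respect to ${\rm Inn}(Q_n)$, not inside $Q$; so one must apply equation \eqref{image of an orbit} to the quandle $Q_n$ together with the kernel congruence of $p|_{Q_n}$ (a congruence of $Q_n$), rather than to $\pi_\alpha$ on all of $Q$, and invoke surjectivity of the induced map on inner groups. Once that is set up, the argument is simply a chase through the definitions of orbit series and quotient quandle, and no serious obstacle remains. This lemma then serves as the workhorse for the subsequent closure results for the families $\OC$, $\OC_n$, $\dOC$, and $\dOC_n$.
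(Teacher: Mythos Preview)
Your proof is correct and follows essentially the same inductive strategy as the paper: both establish by induction on $n$ that orbits in $Q_n$ map onto orbits in $(Q/\alpha)_n$ via surjectivity of the induced map on inner automorphism groups. The only cosmetic difference is that the paper carries out the induction with the ambient map $\pi_\alpha$ and subgroups of $\lmlt(Q)$, whereas you package the step as the general principle ``a surjective quandle homomorphism sends orbits onto orbits'' applied to the restriction $p|_{Q_n}\colon Q_n\twoheadrightarrow (Q/\alpha)_n$.
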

\begin{proof} We induct on $n$ to show $\pi_\alpha(\lmlt{(Q_n)})=\lmlt{((Q/\alpha)_n)}$. The base case follows from the definition of $\pi_\alpha$.
Now, assume that $\pi_\alpha(\lmlt{(Q_n)})=\lmlt{((Q/\alpha)_n)}$. Note that as an arbitrary automorphism $h$ varies over all of  $\textrm{Inn}(Q_n)$, the map $L_{h(a_n)}$ runs over all the generators of $\textrm{Inn}(Q_{n+1})$ and $\pi_{\alpha}(h)$ runs over all of $\textrm{Inn}((Q/\alpha)_n)$. For any such $h$, we have:
$$\pi_{\alpha}(L_{h(a_n)}) = L_{[h(a_n)]_{\alpha}} = L_{\pi_{\alpha}(h)([a_n]_{\alpha})} \in \textrm{Inn}((Q/\alpha)_{n+1}),$$
where the second equality is by equation \ref{piprop}.  Further, the right-hand side of the second equality hits every generator of $\textrm{Inn}(Q/\alpha)_{n+1}$.  Thus $\lmlt((Q/\alpha)_{n+1})=\pi_\alpha(\lmlt(Q_{n+1}))$. 
	
	Using this, we see that
	\begin{align}
	\notag	(Q/\alpha)_{n}&=\setof{h([a_{n}])}{h\in \lmlt{((Q/\alpha)_{n-1})}}\\
	\notag	&=\setof{\pi_\alpha(h)([a_{n}])}{h\in \lmlt{(Q_{n-1})}}\\
	\notag	&=\setof{[h(a_{n})]}{h\in \lmlt{(Q_{n-1})}}=\setof{[b]_\alpha}{b\in Q_n}
	\end{align}
which completes the proof.
\end{proof}
\begin{proposition}\label{Nbeh} The classes $\dOC_n$, $\dOC_{\omega}$, $\dOC$, $\OC_n$, $\OC_{\omega}$, and $\OC$ are closed under taking homomorphic images and finite direct products. The classes $\dOC_n$, $\dOC_{\omega},$ and $\dOC$ are also closed under taking subquandles.
\end{proposition}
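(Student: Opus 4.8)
The plan is to verify each of the three closure operations separately, leaning heavily on Lemma~\ref{series of factor}, which already does the hard work of relating the orbit series of $Q$ to the orbit series of a quotient $Q/\alpha$.

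\emph{Homomorphic images.} Let $Q \in \mathcal{X}$ (where $\mathcal{X}$ is any of the six families) and let $\alpha \in \mathrm{Con}(Q)$. Given any orbit series $\setof{(Q/\alpha)_n}{n \in \mathbb{N}}$ of $Q/\alpha$ determined by a sequence $\setof{b_n}{n\in\mathbb{N}}$, lift each $b_n$ to some $a_n \in Q$ with $[a_n]_\alpha = b_n$, and let $\setof{Q_n}{n\in\mathbb{N}}$ be the corresponding orbit series of $Q$. By Lemma~\ref{series of factor}, $(Q/\alpha)_n = Q_n/\alpha$ for all $n$. Now if $Q_k = Q_{k+1}$ then $Q_k/\alpha = Q_{k+1}/\alpha$, so $(Q/\alpha)_k = (Q/\alpha)_{k+1}$; this handles $\OC$ and, with the bound $k \le n$ preserved, $\OC_n$ and $\OC_\omega$. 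For the trivializing families, observe that if $Q_k = T_1$ then $Q_k/\alpha$ is a one-element quandle, i.e.\ $T_1$; this gives $\dOC$, $\dOC_n$, and $\dOC_\omega$.

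\emph{Finite direct products.} It suffices to treat $Q = Q^{(1)} \times Q^{(2)}$ with both factors in $\mathcal{X}$. The key observation is that $\mathrm{Orb}((x,y), Q^{(1)} \times Q^{(2)}) = \mathrm{Orb}(x, Q^{(1)}) \times \mathrm{Orb}(y, Q^{(2)})$, since $\lmlt(Q^{(1)} \times Q^{(2)})$ acts coordinatewise and the orbit of $(x,y)$ under the component maps $L_{(a,b)} = (L_a, L_b)$ is exactly the product of the component orbits (one inclusion is clear; the other uses that $L_{(a,b)}$ for $(a,b)$ ranging over the product realizes all pairs $(L_a, L_b)$). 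Consequently every orbit series of the product is, levelwise, the product of an orbit series of $Q^{(1)}$ with one of $Q^{(2)}$: if $\setof{R_n}{n\in\mathbb{N}}$ is an orbit series of the product defined by $\setof{(x_n, y_n)}{n\in\mathbb{N}}$, then $R_n = R_n^{(1)} \times R_n^{(2)}$ where $\setof{R_n^{(i)}}{n\in\mathbb{N}}$ is the orbit series of $Q^{(i)}$ defined by $\setof{x_n}{n\in\mathbb{N}}$ (resp.\ $\setof{y_n}{n\in\mathbb{N}}$). If $R_k^{(1)} = R_{k+1}^{(1)}$ and $R_\ell^{(2)} = R_{\ell+1}^{(2)}$ then, since both series are descending, $R_m^{(i)}$ is constant for $m \ge \max(k,\ell)$ in each coordinate, so $R_m = R_{m+1}$ for $m \ge \max(k,\ell)$; this gives $\OC$, and for $\OC_n$ note that if both factors stabilize within $n$ steps then so does the product. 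For $\dOC$ and $\dOC_n$: $R_k^{(1)} = T_1$ and $R_k^{(2)} = T_1$ imply $R_k = T_1 \times T_1 = T_1$, and again the bounds are respected since a one-element quandle stays one-element at all later levels.

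\emph{Subquandles (only for the $\dOC$ families).} Let $Q \in \dOC$ and let $P \le Q$ be a subquandle. The point is that an orbit series of $P$ need not be an orbit series of $Q$, because $\mathrm{Orb}(x, P)$ can be strictly smaller than $\mathrm{Orb}(x, Q) \cap P$. However, the relevant fact is monotonicity of depth: I claim that for any $x \in P$, every orbit series of $P$ starting at $x$ terminates on $T_1$ within as many steps as the principal orbit series of $x$ in $Q$ (or fewer). Here I would argue inductively — $\mathrm{Orb}(x,P) \subseteq \mathrm{Orb}(x,Q)$ as sets, but more is needed: one shows that if $R$ is an orbit in $P$ then $R$ is contained in some orbit $R'$ of $Q$, which in the orbit tree sits strictly below $Q$ unless $R = P = Q$; iterating, any branch of $\Gamma(P)$ embeds (level by level, contained in) a branch of $\Gamma(Q)$, so if every branch of $\Gamma(Q)$ has depth $\le n$ and every leaf is $T_1$, the same holds for $\Gamma(P)$. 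Concretely: if $Q \in \dOC_n$, take any orbit series $P = P_0 \ge P_1 \ge \cdots$; choosing compatible basepoints $x_i \in P_{i+1}$, the elements $x_i \in Q$ generate an orbit series of $Q$ whose $k$-th term contains $P_k$, and since that term is $T_1$ for some $k \le n$, we get $P_k \subseteq T_1$, i.e.\ $P_k = T_1$. The same argument with the bound dropped handles $\dOC$ and $\dOC_\omega$. Note that $\OC$, $\OC_n$, $\OC_\omega$ are genuinely not closed under subquandles (a connected quandle can have non-connected subquandles of arbitrary orbit-tree depth), which is why the statement restricts the subquandle claim to the trivializing families.

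\emph{Main obstacle.} The routine parts are the quotient and product cases, which are essentially bookkeeping on top of Lemma~\ref{series of factor} and the coordinatewise description of orbits. The delicate point is the subquandle case: one must be careful that an orbit series of $P$ is not literally an orbit series of $Q$, and the correct bridge is the containment $\mathrm{Orb}(x_i, P_i) \subseteq \mathrm{Orb}(x_i, Q_i)$ at each level combined with the observation that a term of an orbit series of $Q$ equal to $T_1$ forces any contained subquandle to be $T_1$ as well. Making the inductive "level-by-level containment of branches" precise is where the real (though still short) work lies.
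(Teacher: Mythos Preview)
Your proposal is correct and follows essentially the same approach as the paper: quotients via Lemma~\ref{series of factor}, finite products via the coordinatewise description of orbit series, and subquandles via the level-by-level containment $P_i \subseteq Q_i$ obtained by using the same defining sequence in $Q$. Your write-up is more explicit than the paper's (particularly in flagging that an orbit series of $P$ is not literally one of $Q$ and must be compared by containment), but the underlying argument is identical.
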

\begin{proof}
	We provide the details for the case of $\dOC_n$; other cases are proved analogously.
	Let $Q\in \dOC_n$ and $\alpha\in {\rm Con}(Q)$. We will show that $Q/\alpha\in \dOC_n$. According to Lemma \ref{series of factor}, the orbit series in $Q/\alpha$ are images of orbits series of $Q$ under the canonical map. Hence, if $Q_n=T_1$, then $(Q/\alpha)_n =T_1$ also.
	
	To see that these properties are stable under taking finite direct products, it is enough to note that the elements of an orbit series in the direct product are given by tuples of elements of orbit series in the quandles making up the direct product. 
	
	Let $Q\in \dOC_n$ and $P$ be a subquandle of $Q$. Let $\setof{P_i}{i \in \mathbb{N}}$ be an orbit series defined by $x_i \in P_i$ and $\setof{Q_i}{i \in \mathbb{N}}$ be the orbit series defined by the same $x_i$ in $Q$. By inducting on $i$, it is easy to see that $P_i\leq Q_i$. Since  $Q\in \dOC_n$, $Q_n = T_1$, forcing $P_n = T_1$ as well. Therefore $P\in\dOC_n$.
\end{proof}

\begin{example} The Takasaki quandle  ${\rm Alex}(\mathbb{Q},-)$ is connected and therefore it belongs to the families $\OC_n$, $\OC_{\omega}$, and $\OC$. However, ${\rm Alex}(\mathbb{Q},-)$ has a subquandle ${\rm Alex}(\Z,-)$ that does not belong to $\OC$ (see Example~\ref{notOC}).  This example shows that the classes $\OC_n$, $\OC_{\omega}$, and $\OC$ are not closed under taking subquandles.
\end{example}

\begin{example} Recall $D_{2^n} \in\dOC_{n}$ (see Example~\ref{dihgood}). Therefore, $D_{2^n}$ belongs to the families $\dOC_{\omega}, \dOC, \OC_{\omega},$ and $\OC$. However, the quandle $\prod_{n\in \mathbb{N}} D_{2^n}$ does not belong to $\OC$ (and, therefore, does not belong to $\dOC_{\omega}, \dOC, \OC_{\omega}$, or $\OC$).  This example shows that the families $\dOC_{\omega}$, $\dOC$, $\OC_{\omega}$, and $\OC$ are not closed under taking arbitrary direct products. 
\end{example}

\begin{lemma}\label{extenstionssss} Let $Q$ be a quandle and $\alpha\in {\rm Con}(Q)$. If $Q/\alpha\in \dOC_n$ and $[a]_{\alpha}\in \dOC_m$ for all $a\in Q$, then $Q\in \dOC_{n+m}$.
\end{lemma}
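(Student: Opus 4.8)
The plan is to start from an arbitrary orbit series of $Q$, push it down to $Q/\alpha$ where by hypothesis it must trivialize within $n$ steps, observe that at that stage the corresponding term of the original series sits inside a single $\alpha$-class, and then invoke that the $\alpha$-classes lie in $\dOC_m$ together with closure of $\dOC_m$ under subquandles. So let $Q = Q_0 \geq Q_1 \geq Q_2 \geq \cdots$ be any orbit series, determined by a sequence of elements $x_i \in Q_i$ with $Q_{i+1} = {\rm Orb}(x_i, Q_i)$. By Lemma~\ref{series of factor}, the sequence $Q_0/\alpha \geq Q_1/\alpha \geq Q_2/\alpha \geq \cdots$ is precisely the orbit series of $Q/\alpha$ determined by $[x_0]_\alpha, [x_1]_\alpha, \dots$; since $Q/\alpha \in \dOC_n$, there is some $k \leq n$ with $Q_k/\alpha = T_1$.

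The next step is the translation of this condition: $Q_k/\alpha = T_1$ means that $Q_k$ consists of a single class of (the restriction of) $\alpha$, i.e.\ all elements of $Q_k$ are $\alpha$-related in $Q$, so $Q_k \subseteq [x_k]_\alpha$. Thus $Q_k$ is a subquandle of the $\alpha$-class $[x_k]_\alpha$, which lies in $\dOC_m$ by hypothesis. By Proposition~\ref{Nbeh}, the class $\dOC_m$ is closed under taking subquandles, so $Q_k \in \dOC_m$.

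Finally, the tail $Q_k \geq Q_{k+1} \geq Q_{k+2} \geq \cdots$ is an orbit series of the quandle $Q_k$, namely the one determined by $x_k, x_{k+1}, \dots$. Since $Q_k \in \dOC_m$, there is some $j \leq m$ with $Q_{k+j} = T_1$. As $k+j \leq n+m$ and the orbit series of $Q$ we started with was arbitrary, this proves $Q \in \dOC_{n+m}$.

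The whole argument is routine once Lemma~\ref{series of factor} and the subquandle-closure of $\dOC_m$ from Proposition~\ref{Nbeh} are available. The only points that need a moment's care are the equivalence ``$Q_k/\alpha = T_1$ iff $Q_k$ is contained in a single $\alpha$-class'' and the observation that a tail of an orbit series of $Q$ beginning at $Q_k$ is genuinely an orbit series of the subquandle $Q_k$ (so that $\dOC_m$ applies to it); neither presents a real obstacle.
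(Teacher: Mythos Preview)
Your proof is correct and follows essentially the same approach as the paper: push an orbit series down to $Q/\alpha$ so that after at most $n$ steps it lands inside a single $\alpha$-class, then use that this class (hence the subquandle $Q_n$) lies in $\dOC_m$ to trivialize in at most $m$ further steps. You are simply more explicit in citing Lemma~\ref{series of factor} and Proposition~\ref{Nbeh}, whereas the paper's version absorbs these into a two-line argument.
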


\begin{proof} If $Q/\alpha \in \dOC_n$, then the $n$-th element of every orbit series of $Q$ is contained in one equivalence class of $\alpha$, i.e., $Q_n\subseteq [a]_{\alpha}$ for some $a\in Q$. If the subquandle $[a]_\alpha$ belongs to $\dOC_m$, then $Q_{n+m}=(Q_n)_m = T_1$, so $Q\in \dOC_{n+m}$. 
\end{proof}

\begin{corollary}\label{doc is closed under extensions}
The classes $\dOC_{\omega}$ and $\dOC$ are closed under taking extensions. 
\end{corollary}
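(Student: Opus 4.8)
The plan is to derive the two halves from, respectively, Lemma~\ref{extenstionssss} and the combination of Lemma~\ref{series of factor} with the subquandle closure established in Proposition~\ref{Nbeh}. Recall that a class $\mathcal{K}$ is closed under extensions exactly when $Q\in\mathcal{K}$ follows whenever $Q/\alpha\in\mathcal{K}$ and $[a]_\alpha\in\mathcal{K}$ for every $a\in Q$.

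For $\dOC_{\omega}$ the argument is a direct invocation of Lemma~\ref{extenstionssss}: if $Q/\alpha\in\dOC_{\omega}$ and $[a]_\alpha\in\dOC_{\omega}$ for all $a$, choose $n$ with $Q/\alpha\in\dOC_n$ and $m$ with $[a]_\alpha\in\dOC_m$ for every $a\in Q$; then Lemma~\ref{extenstionssss} gives $Q\in\dOC_{n+m}\subseteq\dOC_{\omega}$.

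For $\dOC$ there is no numerical bound to feed into Lemma~\ref{extenstionssss}, so I would instead track a single orbit series using Lemma~\ref{series of factor}. Suppose $Q/\alpha\in\dOC$ and $[a]_\alpha\in\dOC$ for every $a\in Q$, and fix an orbit series $\{Q_i\}_{i\in\mathbb{N}}$ of $Q$ determined by a sequence $\{a_i\}$. By Lemma~\ref{series of factor}, $\{Q_i/\alpha\}_{i\in\mathbb{N}}$ is the orbit series of $Q/\alpha$ determined by $\{[a_i]_\alpha\}$; since $Q/\alpha\in\dOC$, there is some $k$ with $Q_k/\alpha=T_1$, i.e., $Q_k$ lies inside a single $\alpha$-class $[a]_\alpha$. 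As $\dOC$ is closed under subquandles by Proposition~\ref{Nbeh}, the subquandle $Q_k$ of $[a]_\alpha$ belongs to $\dOC$, so the orbit series of $Q_k$ determined by $\{a_{k+j}\}_{j\in\mathbb{N}}$, which is exactly the tail $\{Q_{k+j}\}_{j\in\mathbb{N}}$, reaches $T_1$ after finitely many steps. Hence the original orbit series stabilizes on $T_1$, and since it was arbitrary, $Q\in\dOC$.

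I expect the only non-routine point to lie in the $\dOC$ half: one should resist chasing a uniform bound and instead follow each orbit series separately, noting that it drops into a single $\alpha$-class after finitely many steps (Lemma~\ref{series of factor}) and that closure under subquandles (Proposition~\ref{Nbeh}) then lets the orbit-series condition of that class take over. The $\dOC_{\omega}$ half is an immediate corollary of Lemma~\ref{extenstionssss}.
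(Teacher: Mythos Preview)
Your $\dOC$ argument is correct and supplies the details the paper leaves implicit: tracking a single orbit series via Lemma~\ref{series of factor}, landing inside one $\alpha$-class, and then invoking subquandle closure from Proposition~\ref{Nbeh} is exactly the right route.

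The $\dOC_\omega$ half, however, has a genuine gap. You write ``choose \ldots\ $m$ with $[a]_\alpha\in\dOC_m$ for every $a\in Q$,'' but the hypothesis of closure under extensions only provides, for each $a$, some $m_a$ with $[a]_\alpha\in\dOC_{m_a}$; when there are infinitely many $\alpha$-classes there is no reason a uniform $m$ should exist, so Lemma~\ref{extenstionssss} does not apply directly. In fact this gap cannot be repaired: take $Q=\bigsqcup_{n\geq 1} D_{2^n}$ from Example~\ref{omegastrict} and let $\alpha$ be the congruence whose blocks are the individual summands $D_{2^n}$ (one checks easily that this is a congruence of the disjoint union). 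Then $Q/\alpha$ is trivial, hence in $\dOC_1\subset\dOC_\omega$, and each block $D_{2^n}$ lies in $\dOC_n\subset\dOC_\omega$; yet Example~\ref{omegastrict} records that $Q\notin\dOC_\omega$. So $\dOC_\omega$ is \emph{not} closed under extensions in the sense defined in Section~\ref{conghom}, and the corollary as stated only goes through for $\dOC$ (or for $\dOC_\omega$ under the stronger hypothesis of a uniform bound on the blocks, which is what Lemma~\ref{extenstionssss} actually delivers). The paper offers no proof of the corollary, so your gap coincides with an apparent oversight in the statement itself.
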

Thus, the only unsettled closure condition is:
\begin{question}
Are $\OC$ and $\OC_{\omega}$ closed under taking extensions?
\end{question} It turns out that there exist extensions of infinite trivial quandles by $D_3$ that do not decompose as direct products. Therefore, a positive answer to this question seems unlikely, although a specific counterexample has not been identified.

Note also that the classes $\dOC_{\omega}$ and $\dOC$ contain the trivial quandles and the classes of $\lambda_Q$ are trivial.  
\begin{corollary}\label{reduction to the conjugation case}
	A quandle $Q$ belongs to one of the classes $\dOC_{\omega}$ or $\dOC$ if and only if $Q/\lambda_Q$ belongs to $\dOC_{\omega}$ or $\dOC,$ respectively.
\end{corollary}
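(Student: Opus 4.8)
The plan is to derive this corollary directly from the closure results already established, so that essentially no new work is needed. I would begin by recording the two facts noted just above the statement: every equivalence class $[a]_{\lambda_Q}$ is a trivial subquandle of $Q$ — indeed, if $c\,\lambda_Q\,d$ then $c\rhd d=L_c(d)=L_d(d)=d$ — and every trivial quandle lies in $\dOC_1$, hence in $\dOC_{\omega}$ and in $\dOC$.

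For the two forward implications I would simply observe that $Q/\lambda_Q$ is a homomorphic image of $Q$. Since $\dOC_{\omega}$ and $\dOC$ are both closed under homomorphic images by Proposition~\ref{Nbeh}, membership of $Q$ in $\dOC_{\omega}$ (respectively in $\dOC$) immediately yields membership of $Q/\lambda_Q$ in $\dOC_{\omega}$ (respectively in $\dOC$).

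For the converse implications I would use the extension closure. Each class of $\lambda_Q$ is trivial, hence lies in $\dOC_{\omega}$ and in $\dOC$; so if $Q/\lambda_Q\in\dOC$, then $Q\in\dOC$ because $\dOC$ is closed under extensions by Corollary~\ref{doc is closed under extensions}, and likewise for $\dOC_{\omega}$. Alternatively, and a little more sharply, if $Q/\lambda_Q\in\dOC_n$ for some $n$, I would apply Lemma~\ref{extenstionssss} with $\alpha=\lambda_Q$ and $m=1$ to conclude $Q\in\dOC_{n+1}\subseteq\dOC_{\omega}$.

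I do not expect any genuine obstacle here: the whole content is already packaged into Proposition~\ref{Nbeh}, Lemma~\ref{extenstionssss}, and Corollary~\ref{doc is closed under extensions}, and the only point needing (routine) care is verifying that the $\lambda_Q$-classes are trivial quandles and therefore sit inside the relevant families. I would also append a one-line remark that no analogous equivalence holds for the bounded classes $\dOC_n$ with $n$ fixed, since passing to $Q/\lambda_Q$ can lower the length by one (for instance $D_4\in\dOC_2\setminus\dOC_1$ while $D_4/\lambda_{D_4}\cong T_2\in\dOC_1$).
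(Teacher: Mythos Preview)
Your proposal is correct and mirrors the paper's own reasoning: the paper states this corollary immediately after noting that the $\lambda_Q$-classes are trivial and that $\dOC_{\omega}$ and $\dOC$ are closed under extensions (Corollary~\ref{doc is closed under extensions}), with the forward direction coming from closure under homomorphic images (Proposition~\ref{Nbeh}). Your extra remark on the failure for fixed $\dOC_n$ via $D_4$ is a nice addition but not part of the paper's treatment.
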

Given that $Q/\lambda_Q$ is a subquandle of ${\rm Conj}_{-1}({\rm Inn}(Q))$, see Section \ref{conghom}, this corollary means it suffices to investigate membership in these classes just for conjugation quandles.

\subsection{Orbit series conditions and connected subquandles}\label{are there connected subquandles?}
Let $\nCS$ be the class of quandles that have no non-trivial connected subquandles. These quandles are closely related to the families defined by the orbit series conditions.
\begin{proposition}
	The class $\nCS$ is closed under taking subquandles, extensions, and direct products. 
\end{proposition}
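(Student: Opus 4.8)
The plan is to verify the three closure properties in turn, in each case reasoning directly about what it means for a quandle to have no non-trivial connected subquandle, i.e., to lie in $\nCS$.

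\emph{Subquandles.} This one is immediate: if $Q\in\nCS$ and $P\leq Q$, then any connected subquandle of $P$ is in particular a connected subquandle of $Q$, hence trivial. So $P\in\nCS$.

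\emph{Direct products.} Suppose $Q=\prod_{i\in I}Q_i$ with each $Q_i\in\nCS$, and let $S\leq Q$ be connected. For each $i$, the projection $\pi_i\colon Q\to Q_i$ is a quandle homomorphism, so $\pi_i(S)$ is a connected subquandle of $Q_i$ (homomorphic images of connected quandles are connected, since $\lmlt$ acts transitively and surjects onto the $\lmlt$ of the image). Hence $\pi_i(S)=T_1$ for every $i$, which forces $S$ to be a single element, i.e., $S=T_1$. Thus $Q\in\nCS$. I expect this step to go through with only the standard remark that a connected quandle has connected homomorphic images, which follows from equation~\eqref{image of an orbit} and the fact that $\pi_\alpha$ is surjective.

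\emph{Extensions.} This is the step I expect to be the main obstacle, and it is the one requiring the most care. Let $\alpha\in\con(Q)$ with $Q/\alpha\in\nCS$ and $[a]_\alpha\in\nCS$ for every $a\in Q$; we must show $Q\in\nCS$. Let $S\leq Q$ be connected. First, the image of $S$ in $Q/\alpha$ under the canonical map is a connected subquandle of $Q/\alpha$, hence trivial; this means $S$ is contained in a single $\alpha$-class, say $S\subseteq[a]_\alpha$. But then $S$ is a connected subquandle of $[a]_\alpha\in\nCS$, so $S=T_1$. The delicate point to get right is simply that $[a]_\alpha$ is genuinely a subquandle of $Q$ (noted already in Section~\ref{conghom}) and that the canonical surjection $Q\to Q/\alpha$ restricted to $S$ really does land in one class precisely because its image is connected and $Q/\alpha\in\nCS$; once that observation is in place, the argument closes cleanly. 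No analogue of the more subtle transvection-group bookkeeping from Lemma~\ref{extenstionssss} is needed here, because $\nCS$ membership is a purely order-theoretic condition on subquandles rather than a quantitative one on orbit series.

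Assembling these three parts gives the proposition.
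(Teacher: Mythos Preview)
Your proof is correct and follows essentially the same approach as the paper's: all three closure properties are verified by the identical direct arguments (subquandles inherit the property trivially; for direct products, project a connected subquandle to each factor; for extensions, push a connected subquandle down to $Q/\alpha$ to trap it in a single $\alpha$-class, then finish inside that class). Your write-up simply makes explicit the fact that homomorphic images of connected quandles are connected, which the paper uses implicitly.
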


\begin{proof}
	Clearly every subquandle of a quandle with no non-trivial connected subquandles has no non-trivial connected subquandles. For an arbitrary index set $I$, let $\setof{Q_i}{i\in I}$ be a family of quandles each of which has no connected subquandles.  If $M$ is a connected subquandle of  $\prod_{i\in I} Q_i$, then its projection onto $Q_i$ is trivial for every $i\in I$, and therefore $M$ is trivial.

Let $N$ be a connected subquandle of a quandle $Q$. If $Q/\alpha$ has no non-trivial connected subquandles, then $N\subseteq [a]_\alpha$ for some $a\in Q$. If $[a]_\alpha$ has no non-trivial connected subquandles, then $N$ is trivial. \end{proof}

Note that the class $\nCS$ is not closed under taking homomorphic images of infinite quandles.  Consider $Q = {\rm Alex}(\Z,-)$.  On the one hand, $Q/\!\equiv_3 \;  \, = D_3$ is connected.  On the other hand, if $S$ is a subquandle of $Q$, we may assume $0 \in S$ by homogeneity of $Q$.  If $S$ is non-trivial, choose $0 \neq a \in S$ of minimal absolute value.  Using the quandle operation, it is not difficult to see that $a\Z \subseteq S$.  Now suppose $qa + r \in S$ for $q \in \mathbb{Z}$ and $a > r \in \mathbb{N}$.  If $q$ is even, then $(q/2)a \rhd (qa + r) = -r \in S$.  If $q$ is odd, then $((q + 1)/2)a \rhd (qa + r) = a - r \in S$.  In either case, $S$ would contain an element of absolute value smaller than $a$.  Hence, $S = a\mathbb{Z}$.  But now the orbit of $2a \in S$ is $2a \Z$, so $S$ is not connected.

The example above also shows that the classes $\nCS$ and $\dOC$ are distinct. The following statement describes $\dOC$ in terms of its connected subquandles.
\begin{proposition}\label{finite descending quandles}
	$\dOC=\OC\cap \nCS$.
\end{proposition}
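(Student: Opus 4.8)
The plan is to establish the two inclusions $\dOC \subseteq \OC \cap \nCS$ and $\OC \cap \nCS \subseteq \dOC$ separately; both are short given the machinery already in place.

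\emph{The inclusion $\dOC \subseteq \OC \cap \nCS$.} That $\dOC \subseteq \OC$ is immediate from the chain \eqref{conditionsinclusions}. To see $\dOC \subseteq \nCS$, I would take $Q \in \dOC$ and a connected subquandle $N$ of $Q$, fix any $x \in N$, and consider the principal orbit series $\setof{Q_i}{i \in \mathbb{N}}$ of $x$, so that $Q_0 = Q$ and $Q_{i+1} = \mathrm{Orb}(x, Q_i)$. Then I would prove by induction on $i$ that $N \subseteq Q_i$: the base case is clear, and if $N \subseteq Q_i$ then, since $N$ is a connected subquandle of $Q_i$ containing $x$, we get $N = \mathrm{Orb}(x, N) \subseteq \mathrm{Orb}(x, Q_i) = Q_{i+1}$. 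Because $Q \in \dOC$, there is $k$ with $Q_k = T_1$, and then $N \subseteq T_1$, so $N$ is trivial. Hence $Q$ has no non-trivial connected subquandle.

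\emph{The inclusion $\OC \cap \nCS \subseteq \dOC$.} Let $Q \in \OC \cap \nCS$ and let $Q = Q_0 \geq Q_1 \geq Q_2 \geq \dots$ be an arbitrary orbit series, determined by a sequence $\setof{x_i}{i \in \mathbb{N}}$. Since $Q \in \OC$ this series stabilizes, say $Q_k = Q_{k+1} = \dots$ for some $k$. But $Q_{k+1} = \mathrm{Orb}(x_k, Q_k)$, so $Q_k = Q_{k+1}$ exhibits $Q_k$ as the orbit of one of its own elements, i.e. $Q_k$ is a connected subquandle of $Q$. As $Q \in \nCS$, $Q_k$ must be trivial; a trivial quandle which is connected has exactly one element (in $T_n$ with $n \geq 2$ each singleton is a distinct orbit), so $Q_k = T_1$. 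Thus every orbit series of $Q$ stabilizes on $T_1$, which is precisely the statement $Q \in \dOC$.

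I do not anticipate a real obstacle here; the two points to handle with a little care are the induction $N \subseteq Q_i$ in the first inclusion — which relies on the elementary fact that the orbit of $x$ computed inside a subquandle is contained in its orbit in the ambient quandle — and the remark that a connected trivial quandle must be $T_1$. An alternative route would be to invoke Proposition \ref{intuition}(iii), reducing the claim to identifying the leaves of $\Gamma(Q)$ with the maximal connected subquandles of $Q$, but the direct argument above seems the most transparent.
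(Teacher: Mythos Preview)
Your proof is correct and follows essentially the same approach as the paper: a connected subquandle lies in every term of a principal orbit series (your induction $N\subseteq Q_i$ is exactly what the paper states tersely as ``$P$ is contained in the intersection of the principal orbit series of $x$''), and conversely any stabilization point $Q_k=Q_{k+1}$ of an orbit series is a connected subquandle. The only cosmetic differences are that the paper phrases both inclusions contrapositively and omits the remark that a connected trivial quandle is $T_1$.
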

\begin{proof}
	$(\subseteq)$ Suppose that $P$ is a non-trivial connected subquandle of $Q$ and let $x\in P$. Then $P$ is contained in the intersection of the principal orbit series of $x$.  Therefore, this orbit series cannot stabilize on $T_1$.

	$(\supseteq)$  Assume that $Q$ does not belong to $\dOC$, i.e., there exists an orbit series $Q_0\geq Q_1\geq\dots$ that does not stabilize on $T_1$. Since $Q\in\OC$, this orbit series must stabilize, i.e., there exists a number $k$ such that $Q_k=Q_{k+1}$. But by definition, $Q_{k+1}$ is an orbit in $Q_k$, and therefore $Q_k$ is a non-trivial connected subquandle of $Q$.
\end{proof}
Since every finite quandle belongs to $\OC$, Proposition~\ref{finite descending quandles} has the following corollary.
\begin{corollary}\label{ncs for finite}
	Let $Q$ be a finite quandle. Then $Q\in \dOC_{\omega}$ if and only if $Q\in \nCS.$
\end{corollary}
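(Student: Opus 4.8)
The plan is to reduce the statement to Proposition~\ref{finite descending quandles}, which already identifies $\dOC$ with $\OC\cap\nCS$, and then to close the (genuine, by Example~\ref{omegastrict}) gap between $\dOC$ and $\dOC_\omega$ using finiteness. The key elementary observation is that in a finite quandle $Q$ every orbit series $Q=Q_0\geq Q_1\geq\cdots$ can strictly descend only finitely often: whenever $Q_{i+1}={\rm Orb}(x_i,Q_i)\subsetneq Q_i$ the cardinality drops, and the first time $Q_k=Q_{k+1}$ the subquandle $Q_k$ equals the orbit of one of its elements, hence is connected, so the series is constant from step $k$ on. Thus every orbit series of a finite $Q$ stabilizes in at most $|Q|-1$ steps, giving $Q\in\OC_\omega$; and if such a series stabilizes on $T_1$, then (since a one-element quandle is connected) that stable value $T_1$ is already attained by step $|Q|-1$. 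Consequently, for finite quandles one has $\dOC=\dOC_\omega$.

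Granting this, I would finish as follows. Recall the standing fact that every finite quandle lies in $\OC$. For the forward implication, $Q\in\dOC_\omega\subseteq\dOC=\OC\cap\nCS$ immediately yields $Q\in\nCS$. For the reverse implication, $Q\in\nCS$ together with $Q\in\OC$ gives $Q\in\OC\cap\nCS=\dOC$ by Proposition~\ref{finite descending quandles}, and the finiteness observation above upgrades $Q\in\dOC$ to $Q\in\dOC_\omega$.

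I do not foresee a substantive obstacle. The only point deserving care is that $\dOC$ and $\dOC_\omega$ really differ for general quandles, so finiteness must be used somewhere; the right place is exactly the bound on the number of strict descents in an orbit series. Everything else is bookkeeping with the definitions and the cited proposition.
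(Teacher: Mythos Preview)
Your proposal is correct and follows essentially the same route as the paper: use that every finite quandle lies in $\OC$ and apply Proposition~\ref{finite descending quandles}. You are more explicit than the paper in justifying why $\dOC$ coincides with $\dOC_\omega$ for finite quandles (the paper leaves this tacit), but the underlying argument is identical.
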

Note that the statement of Corollary ~\ref{ncs for finite} does not necessarily hold for infinite quandles. For example, we just noted that ${\rm Alex}(\Z,-) \in \nCS$, and saw in Example \ref{notOC} that it does not satisfy the descending orbit series condition.  On the other hand, Corollary \ref{ncs for finite} does mean that $\nCS$ is closed under homomorphic images of finite quandles.

\begin{remark}
	Each element $(a,b) \in Q={\rm Alex}(\Z,-) \times {\rm Alex}(\mathbb{Q},-)$ is contained in the connected subquandle $\{a\}\times {\rm Alex}(\mathbb{Q},-)$. But ${\rm Alex}(\Z,-)$ is a homomorphic image of $Q$ that is not in $\OC$, and therefore $Q$ does not belong to $\OC$. So, not even being a union of  connected subquandles is a sufficient condition for a quandle to be in $\OC$.
\end{remark}

\subsection{Descending series conditions, reductive and locally reductive quandles}\label{orbmultred}
In this subsection we study connections between the families  $\dOC$ and  $\dOC_n$ and the families of reductive, locally reductive, and solvable quandles. 
\begin{proposition}\label{MnindOCn}$\mathcal{R}_n\subseteq \dOC_n\subseteq \mathcal{LR}_n$ for every $n\in \mathbb{N}$.
\end{proposition}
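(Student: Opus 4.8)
My plan is to prove the two inclusions separately. I begin with $\dOC_n\subseteq\mathcal{LR}_n$, the softer half. Fix $Q\in\dOC_n$ and $a,b\in Q$, and put $p_0=a$ and $p_i=p_{i-1}\rhd b$ for $i\geq 1$, so that the identity \eqref{red} for the parameter $n$ is exactly the statement ``$p_n=b$ for all $a,b$''. Consider the principal orbit series of $b$, namely $Q=Q_0\geq Q_1\geq Q_2\geq\dots$ with $Q_{i+1}={\rm Orb}(b,Q_i)$; by Proposition~\ref{easieros2} we have $b\in\bigcap_i Q_i$. Since $Q\in\dOC_n$, there is some $k\leq n$ with $Q_k=T_1$, and because $b\in Q_k$ this forces $Q_k=\{b\}$. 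An immediate induction on $i$ shows $p_i\in Q_i$ for all $i\geq 0$: the case $i=0$ is $a\in Q_0$, and if $p_i\in Q_i$ then, since also $b\in Q_i$, $p_{i+1}=L_{p_i}(b)\in{\rm Orb}(b,Q_i)=Q_{i+1}$. Hence $p_k\in Q_k=\{b\}$, so $p_k=b$ and therefore $p_i=b$ for every $i\geq k$; in particular $p_n=b$. Thus $Q$ satisfies the identity \eqref{red}, i.e.\ $Q\in\mathcal{LR}_n$.

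For $\mathcal{R}_n\subseteq\dOC_n$ the crux is the following claim: \emph{if $R$ is a quandle and $S={\rm Orb}(y,R)$ is an orbit of $R$, then $\mathcal{O}_S^k\subseteq\mathcal{O}_R^{k+1}$ for every $k\geq 0$}, where $\mathcal{O}_R^{k+1}$ is understood restricted to $S\times S$ (legitimate, $S$ being a subquandle of $R$ that is $\lmlt(R)$-invariant). I would prove this by induction on $k$. For $k=0$ both sides equal $1_S$, since $\mathcal{O}_R^1=\mathcal{O}_{\dis(R)}$ is the orbit decomposition of $R$ and any two elements of the single orbit $S$ are $\mathcal{O}_R^1$-related. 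For the inductive step, the hypothesis $\mathcal{O}_S^k\subseteq\mathcal{O}_R^{k+1}$ says precisely that whenever $a,b\in S$ satisfy $a\,\mathcal{O}_S^k\,b$ one has $a\,\mathcal{O}_R^{k+1}\,b$, so each generator $(L_aL_b^{-1})|_S$ of $\dis_{\mathcal{O}_S^k}$ is the restriction to $S$ of the generator $L_aL_b^{-1}$ of $\dis_{\mathcal{O}_R^{k+1}}$; consequently the image of $\dis_{\mathcal{O}_R^{k+1}}$ under restriction to $S$ contains $\dis_{\mathcal{O}_S^k}$, and so the $\dis_{\mathcal{O}_S^k}$-orbits on $S$ are contained in the $\dis_{\mathcal{O}_R^{k+1}}$-orbits. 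This last statement is exactly $\mathcal{O}_S^{k+1}\subseteq\mathcal{O}_R^{k+2}$ on $S\times S$, completing the induction.

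Granting the claim, let $Q\in\mathcal{R}_n$; by Proposition~\ref{strong red and inn}(iv) this means $\mathcal{O}_Q^n=0_Q$. Let $Q=Q_0\geq Q_1\geq\dots$ be an arbitrary orbit series, with $Q_{j+1}={\rm Orb}(x_j,Q_j)$. Applying the claim with $R=Q_j$, $S=Q_{j+1}$ and $k=n-j-1$, a short induction on $j$ gives $\mathcal{O}_{Q_j}^{n-j}=0_{Q_j}$ for all $0\leq j\leq n$, the base case $j=0$ being the hypothesis $\mathcal{O}_{Q_0}^n=0_Q$. Taking $j=n$ yields $\mathcal{O}_{Q_n}^0=1_{Q_n}=0_{Q_n}$, which forces $|Q_n|=1$, i.e.\ $Q_n=T_1$. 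Since the orbit series was arbitrary, $Q\in\dOC_n$, and the proposition follows.

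I expect the only real obstacle to be the claim about orbits. The subtlety is that for an orbit $S$ of $R$ the inner automorphism group ${\rm Inn}(S)$ can be a proper subgroup of the group $\{g|_S : g\in{\rm Inn}(R)\}$ of restrictions (this is exactly why, as Example~\ref{dihgood} illustrates, an orbit of a quandle need not be a connected subquandle), so statements about congruences do not transfer freely between $S$ and $R$; what rescues the argument is that the transvection subgroups $\dis_{\mathcal{O}_S^k}$ really do sit inside the restrictions of $\dis_{\mathcal{O}_R^{k+1}}$, which is enough to compare the corresponding orbit decompositions. Everything else is routine bookkeeping with the congruences $\mathcal{O}^k$ and the characterizations in Proposition~\ref{strong red and inn}.
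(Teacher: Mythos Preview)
Your proof is correct. For the inclusion $\mathcal{R}_n\subseteq\dOC_n$, your approach via the auxiliary claim $\mathcal{O}_S^k\subseteq\mathcal{O}_R^{k+1}|_{S\times S}$ is essentially the paper's argument with more detail unpacked: the paper simply asserts, ``by induction, the $n$-th element of every orbit series of $Q$ is contained in a single equivalence class of $\mathcal{O}_Q^n$,'' which is the same inductive mechanism (generators of $\dis$ over an orbit lie in $\dis_{\mathcal{O}^{k+1}}$) phrased in terms of the ambient congruences on $Q$ rather than the intrinsic ones on the $Q_j$.

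For $\dOC_n\subseteq\mathcal{LR}_n$ your route genuinely differs. The paper argues by induction on $n$: the orbits of $Q\in\dOC_n$ lie in $\dOC_{n-1}$, hence are $(n-1)$-locally reductive by induction, and then Corollary~\ref{reductivity by component} lifts this to $Q\in\mathcal{LR}_n$. Your argument is direct and avoids both the induction and the appeal to Corollary~\ref{reductivity by component}: you track the element $p_i=(\cdots(a\rhd b)\rhd\cdots)\rhd b$ down the principal orbit series of $b$ and observe $p_i\in Q_i$, so $p_n$ lands in the singleton $\{b\}$. This is more elementary and makes the connection between the orbit series and the locally reductive identity completely transparent. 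The paper's approach, on the other hand, fits naturally into the extension-closure framework developed in Section~\ref{redqua}.
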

\begin{proof}
	It is easy to check, by induction, that the $n$-th element of every orbit series of a quandle $Q$ is contained in a single equivalence class of $\mathcal{O}^n_Q$. According to Proposition \ref{strong red and inn}, if $Q$ is $n$-reductive, then $\mathcal{O}^n_Q=0_Q$ and therefore the $n$-th element of every orbit series of $Q$ is trivial. Hence $Q\in \dOC_n$. 
	
	If $Q\in\dOC_1$, then it is $1$-locally reductive. Assume that $Q\in\dOC_n$. Then the orbits belong to $\dOC_{n-1}$ and so, by induction, they are $(n-1)$-locally reductive. By Lemma \ref{reductivity by component}, $Q$ is $n$-locally reductive.
\end{proof}
\begin{corollary} Let $G$ be a nilpotent group of nilpotency class $n$. Then ${\rm Conj}(G)\in \dOC_{n}$.
\end{corollary}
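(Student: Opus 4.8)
The plan is to chain together two results already proved in the excerpt, with no new work required. First I would invoke Corollary~\ref{red iff nilpotent}: it asserts that ${\rm Conj}(G)$ lies in $\mathcal{R}_n$ if and only if $G$ is nilpotent of class $n$. Since the hypothesis is precisely that $G$ is nilpotent of class $n$, this immediately yields ${\rm Conj}(G)\in\mathcal{R}_n$. Second, I would apply the inclusion $\mathcal{R}_n\subseteq\dOC_n$ from Proposition~\ref{MnindOCn}. Composing these two facts gives ${\rm Conj}(G)\in\dOC_n$, which is the assertion, so the proof is essentially a single sentence.

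There is no real obstacle here: all of the substance lives upstream. The ``hard'' content is already carried by Proposition~\ref{MnindOCn}, whose proof shows that the $n$-th term of every orbit series of an $n$-reductive quandle is contained in a single class of $\mathcal{O}^n_Q$, which is $0_Q$ by Proposition~\ref{strong red and inn}(iv); and by Corollary~\ref{red iff nilpotent}, which rests on the identity $\lmlt({\rm Conj}(G))=G/Z(G)$ together with Proposition~\ref{strong red and inn}(iii) (so that $G$ has class $n$ exactly when $G/Z(G)$ has class $n-1$, i.e.\ when $\lmlt({\rm Conj}(G))$ is nilpotent of class $n-1$).

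The only thing worth a passing remark is the boundary behaviour, which causes no trouble: for $n=1$ (so $G$ abelian), ${\rm Conj}(G)=T_1\in\dOC_1$; and a trivial $G$ again gives $T_1$. No case analysis is needed. One could also phrase the statement slightly more strongly — since $\dOC_n\subseteq\mathcal{LR}_n$ by Proposition~\ref{MnindOCn}, the same argument shows ${\rm Conj}(G)$ is $n$-locally reductive — but the stated form is exactly what the two cited results deliver directly.
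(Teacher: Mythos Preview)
Your proof is correct and follows exactly the paper's route: first obtain ${\rm Conj}(G)\in\mathcal{R}_n$ (the paper does this directly via $\lmlt({\rm Conj}(G))=G/Z(G)$ and Proposition~\ref{strong red and inn}, which is precisely the content of Corollary~\ref{red iff nilpotent} that you invoke), then apply the inclusion $\mathcal{R}_n\subseteq\dOC_n$ from Proposition~\ref{MnindOCn}. One small slip in your parenthetical remark: for $G$ abelian and nontrivial, ${\rm Conj}(G)$ is the trivial quandle $T_{|G|}$, not $T_1$; this is harmless since every trivial quandle lies in $\dOC_1$, and in any case no boundary analysis is needed.
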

\begin{proof}In this case ${\rm Inn}({\rm Conj}(G))=G/Z(G)$ has nilpotency class $n-1$, and therefore, by Proposition~\ref{strong red and inn}, {\rm Conj}(G) belongs to $\mathcal{R}_n$. By Proposition~\ref{MnindOCn}, it belongs to  $\dOC_{n}$.
\end{proof}

The following example shows that the inclusions in Proposition \ref{MnindOCn} are strict (except for the case $n=1$ of trivial quandles).
\begin{example}\label{example1}
	Let $Q$ be the quandle with the following operation table: 
	\bigskip
	\begin{center}$Q= \left[
		\begin{tabular}{ c c c c  c c c c || c c c c || c c c c}
			1&2&4&3&5&6&7&8   &11&12&9&10&15&16&13&14 \\
1&2&4&3&5&6&7&8 &11&12&9&10&15&16&13&14 \\
2&1&3&4&5&6&7&8  &10&9&12&11&14&13&16&15 \\
2&1&3&4&5&6&7&8&10&9&12&11&14&13&16&15 \\
1&2&3&4&5&6&8&7&11&12&9&10&14&13&16&15 \\
1&2&3&4&5&6&8&7&11&12&9&10&14&13&16&15 \\
1&2&3&4&6&5&7&8&10&9&12&11&15&16&13&14 \\
1&2&3&4&6&5&7&8&10&9&12&11&15&16&13&14 \\ \hline
\hline
5&6&7&8&1&2&3&4&9&10&11&12&13&15&14&16 \\
6&5&7&8&2&1&3&4&9&10&11&12&16&14&15&13 \\
5&6&8&7&1&2&4&3&9&10&11&12&16&14&15&13 \\
6&5&8&7&2&1&4&3&9&10&11&12&13&15&14&16 \\ \hline \hline
7&8&5&6&3&4&1&2&9&11&10&12&13&14&15&16 \\
8&7&5&6&3&4&2&1&12&10&11&9&13&14&15&16 \\
7&8&6&5&4&3&1&2&12&10&11&9&13&14&15&16 \\
8&7&6&5&4&3&2&1&9&11&10&12&13&14&15&16 \\
		\end{tabular}\, \right]$
	\end{center}\medskip
The orbits of $Q$ are (evidently)  $\{ 1, \ldots, 8\}, \{ 9, \ldots, 12 \}$ and $\{ 13, \ldots, 16\}$. The first orbit is the direct sum of two copies of $D_4$, which is not trivial, and so $Q \in \dOC_3$, but $Q \nin \dOC_2$. On the other hand, $Q$ is $2$-locally reductive, $5$-reductive, and, critically, not $3$-reductive.  This example was constructed so that the action of the other two trivial orbits on $D_4 \sqcup D_4$ does not disturb its $2$-local reductivity.   
\end{example}

The next two examples establish the strictness of other inclusions.  The first shows that $\mathcal{R_{\omega}}\neq \dOC_{\omega}$ and the second illustrates that $\mathcal{LR}_{\omega} \not\subseteq \dOC$.
\begin{example}\label{M neq dOC}Let $G$ be a $3$-Engel group that is not nilpotent (as given in Example 18.3.1 of \cite{KarMer}) and let $Q={\rm Conj}(G)$. Since $G$ is $3$-Engel, it follows that $\langle x^G\rangle = {\rm Orb}(x,Q)$ is nilpotent of length $2$ for every $x\in G$ \cite{KapKap}. Hence $\rm{Inn}(\textrm{Orb}(x,Q))$ is abelian. By Proposition~\ref{strong red and inn}, this means that each orbit of $Q$ belongs to $\mathcal{R}_2\subseteq \dOC_2$, and so $Q \in \dOC_3$ by the definition of orbit series. On the other hand, $G$ is not nilpotent, so $Q \nin \mathcal{R_{\omega}}$ by Proposition~\ref{strong red and inn}. 
\end{example}

\begin{example} \label{Wonderful example of Marco} Let $B=B(2,2^k)$  be the free Burnside group of rank $2$ and exponent $2^k$ with $k$ large enough (so that $B$ is infinite). By the solution of the restricted Burnside problem, the group $B$ has only finitely many subgroups of finite index $\setof{B_i}{1\leq i\leq m}$. So $B^0=\bigcap_{i=1}^m B_i$ is a finite-index subgroup of $B$ \cite{Zelmanov}. The subgroup $B^0$ has no finite-index subgroup and it has exponent dividing $2^k$. Moreover, $B^0$ is finitely generated since it is a finite-index subgroup of a finitely generated group. Therefore $B^0$ has an infinite simple quotient $G$ that also has exponent dividing $2^k$.
	
Let $g\in G$ be an element of order two and $\widehat{g}$ be the inner automorphism induced by $g$. Let $Q={\rm Alex}(G,\widehat{g})$ be the Alexander quandle of the group $G$ with
respect to the automorphism $\widehat{g}$. The operation in $Q$ is given by $x\rhd y=\widehat{g}(yx^{-1})x$. Directly from the definition of the operation, ${\rm Orb}(1,Q) = \langle\{\widehat{g}(x)^{-1}x~:~x\in G\}\rangle$. Since the subgroup $\langle\{\widehat{g}(x)^{-1}x~:~x\in G\}\rangle$ is normal in $G$ (see, for example, \cite[Proof of Proposition~1]{BarNasNes} or \cite{GonNas}), and $G$ is simple, it follows that $\langle\{\widehat{g}(x)^{-1}x~:~x\in G\}\rangle=G$. Hence, ${\rm Orb}(1,Q)=G=Q$ and so $Q$ is connected. In particular, $Q$ does not belong to $\dOC$ (nor $\dOC_{\omega}$).

Now we show that $Q \in \mathcal{LR}_{k + 1}$. Define a function $t: Q \rightarrow Q$ by $t(x) = x \rhd 1 = \widehat{g}(x)^{-1}x$.  Since $g$ has order two, for an arbitrary $x\in G$ we have the equalities
\begin{equation*}\label{Wow Marco}\widehat{g}(t(x))=\widehat{g}(\widehat{g}(x)^{-1}x)=\widehat{g}^2(x)^{-1}\widehat{g}(x)=x^{-1}\widehat{g}(x)=t(x)^{-1}.
\end{equation*}
We show that $t^n(x)=t(x)^{2^{n-1}}$ by induction on $n$. The basis of induction is trivial. Using the above equation for $\widehat{g}(t(x))$ and the induction hypothesis we have 
	$$t^n(x)=\widehat{g}\left(t^{n-1}(x)\right)^{-1}t^{n-1}(x)=\widehat{g}\left(t(x)^{2^{n-2}}\right)^{-1}t(x)^{2^{n-2}}=t(x)^{2^{n-2}}t(x)^{2^{n-2}}=t(x)^{2^{n-1}}$$
which completes the induction. Translating this to the operation in $Q$, and considering the particular case $n = k+ 1$, we have: 
\begin{equation*}\label{almost reductive}(\dots((a \rhd \underbrace{1)\rhd 1)\rhd \ldots ) \rhd 1}_{k+1} =t^{k+1}(a)= t(a)^{2^k}= 1
\end{equation*}
for all $a\in Q$. Since $Q$ is connected, apply the inner automorphism taking $1$ to $b$ to see that
$$(\dots((a \rhd \underbrace{b)\rhd b)\rhd \ldots ) \rhd b}_{k+1} =b
$$
holds for all $a,b\in Q$, i.e.,  $Q$ is $k+1$-locally reductive. 
\end{example}

Proposition \ref{MnindOCn} together with Examples \ref{example1} through \ref{Wonderful example of Marco} and the system of inclusions (\ref{MRinclusions}) imply the following theorem.
\begin{theorem}\label{ourinclusions}The following system of inclusions holds 
$$
	\begin{matrix}\mathcal{R}_n&\subset&\dOC_n&  \subset &  \mathcal{LR}_n\\
	\cap&&\cap& & \cap\\
	\mathcal{R}_{\omega}&\subset&\dOC_{\omega}& \subset& \mathcal{LR}_{\omega}
	\end{matrix}\, 
$$
	and all inclusions in this system are strict.
\end{theorem}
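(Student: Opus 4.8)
The plan is to read this theorem off as a synthesis of Proposition~\ref{MnindOCn} with the examples already constructed, so that no new idea is needed. First, the inclusions themselves: Proposition~\ref{MnindOCn} gives the top row $\mathcal{R}_n\subseteq\dOC_n\subseteq\mathcal{LR}_n$ for every $n$; taking unions over $n$ (each of the three families being an increasing chain) yields the bottom row $\mathcal{R}_{\omega}\subseteq\dOC_{\omega}\subseteq\mathcal{LR}_{\omega}$; and the three vertical inclusions $\mathcal{R}_n\subseteq\mathcal{R}_{\omega}$, $\dOC_n\subseteq\dOC_{\omega}$, $\mathcal{LR}_n\subseteq\mathcal{LR}_{\omega}$ are immediate from the definitions of the $\omega$-classes. (For $n=1$ all three top-row classes collapse to the trivial quandles, so the horizontal strictness is read for $n\geq 2$.) It then remains to certify, inclusion by inclusion, that each containment is proper by exhibiting a witnessing quandle.

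The three vertical inclusions are handled uniformly by the dihedral quandles. By Example~\ref{dihgood}, $D_{2^{n+1}}$ lies in $\dOC_{n+1}\setminus\dOC_n$; and since $D_{2^{n+1}}$ is medial, it lies in $\mathcal{R}_k$ if and only if it lies in $\dOC_k$ if and only if it lies in $\mathcal{LR}_k$, for every $k$ (by Proposition~\ref{MnindOCn} and the equivalence of \eqref{red and med} with \eqref{red} on medial quandles). Hence $D_{2^{n+1}}$ belongs to each of $\mathcal{R}_{n+1}$, $\dOC_{n+1}$, $\mathcal{LR}_{n+1}$ but to none of $\mathcal{R}_n$, $\dOC_n$, $\mathcal{LR}_n$, which makes all three vertical inclusions strict at once.

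For the horizontal inclusions I would invoke the three examples following Example~\ref{example1}. The $16$-element quandle of Example~\ref{example1} is $2$-locally reductive yet not in $\dOC_2$, and lies in $\dOC_3$ without being $3$-reductive; so it witnesses both $\dOC_2\subsetneq\mathcal{LR}_2$ and $\mathcal{R}_3\subsetneq\dOC_3$. For the bottom row, ${\rm Conj}(G)$ with $G$ a non-nilpotent $3$-Engel group (Example~\ref{M neq dOC}) lies in $\dOC_3$ but, by Proposition~\ref{strong red and inn}, outside $\mathcal{R}_{\omega}$, giving $\mathcal{R}_{\omega}\subsetneq\dOC_{\omega}$ (and $\mathcal{R}_n\subsetneq\dOC_n$ for every $n\geq 3$); and the connected, non-trivial, $(k+1)$-locally reductive quandle of Example~\ref{Wonderful example of Marco} lies in $\mathcal{LR}_{\omega}$ but, being connected and non-trivial, not in $\dOC$, hence not in $\dOC_{\omega}$, giving $\dOC_{\omega}\subsetneq\mathcal{LR}_{\omega}$ (in fact $\mathcal{LR}_{\omega}\not\subseteq\dOC$). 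Combined with the already-established strictness in \eqref{MRinclusions}, this settles every inclusion in the diagram.

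I do not expect a genuine obstacle: the argument is essentially bookkeeping once Proposition~\ref{MnindOCn} and Examples~\ref{example1}--\ref{Wonderful example of Marco} are in hand, and the real mathematical weight has already been spent in those examples --- above all the infinite simple quotient of a finitely generated Burnside group underlying Example~\ref{Wonderful example of Marco} and the non-nilpotent $3$-Engel group underlying Example~\ref{M neq dOC}, which are precisely what force $\mathcal{LR}_{\omega}\neq\dOC_{\omega}$ and $\dOC_{\omega}\neq\mathcal{R}_{\omega}$. The only thing requiring a little care, rather than being hard, is organizing the witnesses so that each of the displayed inclusions is covered and recording the degeneracy at $n=1$.
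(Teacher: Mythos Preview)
Your proposal is correct and follows essentially the same approach as the paper: the paper's proof is the one-line remark that Proposition~\ref{MnindOCn}, Examples~\ref{example1}--\ref{Wonderful example of Marco}, and the system~\eqref{MRinclusions} together imply the theorem, and your argument is a careful unfolding of precisely that. The only minor difference is that you handle the three vertical inclusions uniformly via the medial dihedral quandles $D_{2^{n+1}}$ (Example~\ref{dihgood}), whereas the paper cites~\eqref{MRinclusions} --- which strictly speaking covers only the outer two columns --- so your treatment is slightly more explicit on the middle vertical $\dOC_n\subset\dOC_\omega$.
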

By Proposition \ref{finite M=R}, finite quandles are reductive if and only if they are locally reductive. Hence,   Theorem~\ref{ourinclusions} has the following corollary.
\begin{corollary}\label{equivalences_for_finite}
	Let $Q$ be a finite quandle. Then $Q\in \mathcal{R}_{\omega} \Leftrightarrow Q\in \dOC\Leftrightarrow Q\in \mathcal{LR}_{\omega}$. 

\end{corollary}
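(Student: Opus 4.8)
The plan is to deduce this corollary by closing, for finite $Q$, the cycle of implications $Q\in\mathcal{R}_{\omega}\ \Rightarrow\ Q\in\dOC\ \Rightarrow\ Q\in\mathcal{LR}_{\omega}\ \Rightarrow\ Q\in\mathcal{R}_{\omega}$, assembling Theorem~\ref{ourinclusions}, Theorem~\ref{finite M=R}, Proposition~\ref{finite descending quandles}, and Corollary~\ref{ncs for finite}. No finiteness is needed for the first arrow: Theorem~\ref{ourinclusions} gives $\mathcal{R}_{\omega}\subseteq\dOC_{\omega}$, and $\dOC_{\omega}\subseteq\dOC$ holds by definition (and $\mathcal{R}_{\omega}\subseteq\mathcal{LR}_{\omega}$ always). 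The last arrow is precisely Theorem~\ref{finite M=R}. So the only implication requiring real work is $\dOC\Rightarrow\mathcal{LR}_{\omega}$ for finite $Q$.

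For that implication I would first upgrade ``$Q\in\dOC$'' to ``$Q\in\dOC_{\omega}$'' using finiteness. A finite quandle lies in $\OC$: each orbit of a finite quandle has only finitely many sub-orbits, and a proper orbit of $P$ is strictly smaller than $P$, so along any branch of the orbit tree $\Gamma(Q)$ the cardinalities strictly decrease; hence $\Gamma(Q)$ is a finite tree of depth at most $|Q|$. If in addition $Q\in\dOC$, then by Proposition~\ref{intuition} every leaf of this finite tree is $T_1$, so in fact $Q\in\dOC_{|Q|}\subseteq\dOC_{\omega}$. (Equivalently, one may invoke Proposition~\ref{finite descending quandles}, which gives $\dOC=\OC\cap\nCS$, together with Corollary~\ref{ncs for finite}, which identifies $\nCS$ with $\dOC_{\omega}$ among finite quandles.)

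With $Q\in\dOC_{\omega}$ in hand, Theorem~\ref{ourinclusions} yields $Q\in\dOC_{\omega}\subseteq\mathcal{LR}_{\omega}$, completing the cycle; combining the three arrows then gives $\mathcal{R}_{\omega}\Leftrightarrow\dOC\Leftrightarrow\mathcal{LR}_{\omega}$ for finite $Q$.

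I do not expect a genuine obstacle; the argument is a synthesis of results already in place. The one point that must be handled carefully is that Theorem~\ref{ourinclusions} is phrased in terms of the indexed classes $\mathcal{R}_n,\dOC_n,\mathcal{LR}_n$ and of $\dOC_{\omega}$ rather than $\dOC$, so the corollary genuinely depends on the finite-orbit-tree observation that $\dOC$ and $\dOC_{\omega}$ contain exactly the same finite quandles — and on the fact, noted immediately before the corollary, that $\mathcal{R}_{\omega}$ and $\mathcal{LR}_{\omega}$ coincide on finite quandles via Theorem~\ref{finite M=R}.
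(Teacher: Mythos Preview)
Your proposal is correct and follows essentially the same approach as the paper: combine the chain $\mathcal{R}_\omega\subset\dOC_\omega\subset\mathcal{LR}_\omega$ from Theorem~\ref{ourinclusions} with Theorem~\ref{finite M=R} to close the loop. You are simply more explicit than the paper about the one point it leaves tacit, namely that for finite quandles $\dOC$ and $\dOC_\omega$ coincide (your orbit-tree depth argument, or equivalently Proposition~\ref{finite descending quandles} with Corollary~\ref{ncs for finite}), which is exactly the bridge needed since the corollary is stated for $\dOC$ while Theorem~\ref{ourinclusions} speaks of $\dOC_\omega$.
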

Similarly, if $Q$ is medial, then it is $n$-locally reductive if and only if it is $n$-reductive \cite{JPSZ}.  
\begin{corollary}\label{equivalences for medial}
	Let $Q$ be a medial quandle and $n\in \mathbb{N}$. Then $Q\in \mathcal{R}_n\Leftrightarrow Q\in \dOC_n\Leftrightarrow Q\in \mathcal{LR}_n$. 
\end{corollary}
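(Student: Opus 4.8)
The plan is to deduce the statement directly from results already in hand, with essentially no new work. The key input is the chain of inclusions $\mathcal{R}_n\subseteq\dOC_n\subseteq\mathcal{LR}_n$ from Proposition~\ref{MnindOCn}, which holds for \emph{every} quandle and every $n$, together with the fact (recorded in the discussion just before Lemma~\ref{R is cloded for}, and proved in \cite{JPSZ}) that for a medial quandle the reductivity identity \eqref{red and med} and the local reductivity identity \eqref{red} are equivalent, i.e.\ $Q\in\mathcal{R}_n$ iff $Q\in\mathcal{LR}_n$ whenever $Q$ is medial.

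First I would state the two facts being used: (a) Proposition~\ref{MnindOCn} gives the two inclusions $\mathcal{R}_n\subseteq\dOC_n$ and $\dOC_n\subseteq\mathcal{LR}_n$, valid unconditionally; (b) \cite{JPSZ} gives $\mathcal{R}_n\cap\{\text{medial}\}=\mathcal{LR}_n\cap\{\text{medial}\}$. Then the argument is a short chase: suppose $Q$ is medial. If $Q\in\mathcal{R}_n$, then $Q\in\dOC_n$ by the first inclusion, and $Q\in\mathcal{LR}_n$ by the second. Conversely, if $Q\in\mathcal{LR}_n$, then by (b) $Q\in\mathcal{R}_n$, whence $Q\in\dOC_n$ again by the first inclusion. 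So all three memberships are equivalent for medial $Q$, which is the claim.

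The only point requiring any care is to make sure that no mediality hypothesis is "spent'' unnecessarily: the inclusions of Proposition~\ref{MnindOCn} are used for arbitrary quandles, so mediality is invoked exactly once, to pass from $\mathcal{LR}_n$ back to $\mathcal{R}_n$. There is no genuine obstacle here—the substantive content is entirely carried by the cited theorem of \cite{JPSZ}, and the present corollary is just the observation that the class $\dOC_n$, being sandwiched between $\mathcal{R}_n$ and $\mathcal{LR}_n$, is squeezed onto both of them once those two coincide. (An analogous one-line argument, using Proposition~\ref{finite M=R} in place of \cite{JPSZ}, is what underlies Corollary~\ref{equivalences_for_finite}.)
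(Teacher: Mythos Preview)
Your argument is correct and is essentially identical to the paper's own: the paper simply cites \cite{JPSZ} for the equivalence $\mathcal{R}_n\Leftrightarrow\mathcal{LR}_n$ in the medial case and combines it with the sandwich $\mathcal{R}_n\subseteq\dOC_n\subseteq\mathcal{LR}_n$ from Proposition~\ref{MnindOCn} (packaged in Theorem~\ref{ourinclusions}).
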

This corollary lets us see that the locally reductive order of an extension established in Lemma \ref{R is cloded for} is the best possible:  The dihedral quandle $D_{2^{n+m}}$ is medial, hence in $\mathcal{LR}_{n+m} \setminus \mathcal{LR}_{n+m -1}$.  However, it is not difficult to see that $D_{2^{n+m}}$ has $D_{2^n}$ as a quotient, with each equivalence class isomorphic to $D_{2^m}$.

Abelian quandles are medial, so Corollary~\ref{equivalences for medial} can be applied to abelian quandles.  The following statement partially extends  Corollary~\ref{equivalences for medial} from abelian quandles to solvable quandles. 
\begin{proposition}\label{solvable doc iff R}
	Let $Q$ be a solvable quandle. Then $Q\in \dOC_{\omega}$ if and only if $Q\in \mathcal{LR}_{\omega}$. In particular, if $Q$ is $k$-locally reductive and solvable of length $n$, then $Q\in \dOC_{nk}$.
\end{proposition}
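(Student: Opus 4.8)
The plan is to dispose of the forward implication for free and obtain the reverse implication, with its quantitative bound, by induction on the solvability length. The inclusion $\dOC_\omega\subseteq\mathcal{LR}_\omega$ uses no solvability hypothesis at all: it is already recorded in Theorem~\ref{ourinclusions} (indeed in Proposition~\ref{MnindOCn}). So the real content is the reverse implication, which I would prove in the sharper form: \emph{every $k$-locally reductive quandle that is solvable of length $n$ belongs to $\dOC_{nk}$}. Note $k\geq 1$ automatically, since $T_1$ is $1$-locally reductive. The base case $n=1$ is immediate: a quandle solvable of length $1$ is abelian, hence medial, and Corollary~\ref{equivalences for medial} gives $Q\in\mathcal{LR}_k\Longleftrightarrow Q\in\dOC_k$, so $Q\in\dOC_k=\dOC_{1\cdot k}$.

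For the inductive step, let $Q$ be $k$-locally reductive and solvable of length $n\geq 2$, and set $\beta=\gamma^{n-1}(Q)$, the last nonzero term of the derived series of congruences. Then $[\beta,\beta]=\gamma^{n}(Q)=0_Q$, so $\beta$ is an abelian congruence, and the induced chain $0_{Q/\beta}\leq\gamma^{n-2}(Q)/\beta\leq\cdots\leq 1_{Q/\beta}$ shows $Q/\beta$ is solvable of length at most $n-1$. Since $\mathcal{LR}_k$ is a variety (Lemma~\ref{R is cloded for}), $Q/\beta$ is still $k$-locally reductive, so the inductive hypothesis yields $Q/\beta\in\dOC_{(n-1)k}$.

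Next I would show that every block $[a]_\beta$ lies in $\dOC_k$. As a subquandle of $Q$ it is $k$-locally reductive (Lemma~\ref{R is cloded for}), so by Corollary~\ref{equivalences for medial} it suffices to see $[a]_\beta$ is medial, i.e. that $\dis([a]_\beta)$ is abelian. For $x\in[a]_\beta$, quandle axiom~(i) gives $[x\rhd c]_\beta=[a]_\beta\rhd[a]_\beta=[a]_\beta$ for every $c\in[a]_\beta$, so $L_x$ restricts to a bijection of $[a]_\beta$; consequently restriction to $[a]_\beta$ maps the subgroup $\langle L_xL_y^{-1}:x,y\in[a]_\beta\rangle\leq\dis_\beta$ onto $\dis([a]_\beta)$. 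Since $\beta$ is abelian, $\dis_\beta$ is abelian (by the characterization of abelian congruences of quandles from \cite{BonSta}, an abelian congruence has abelian relative transvection group), hence so is the section $\dis([a]_\beta)$, and $[a]_\beta$ is medial. Thus $[a]_\beta\in\mathcal{LR}_k=\dOC_k$ for every $a\in Q$. Finally, Lemma~\ref{extenstionssss} applied to $\beta$, with $Q/\beta\in\dOC_{(n-1)k}$ and $[a]_\beta\in\dOC_k$, gives $Q\in\dOC_{(n-1)k+k}=\dOC_{nk}$, which closes the induction and, taking unions over $n$ and $k$, proves the proposition.

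I expect the block-mediality step to be the crux. Two choices must be made correctly: quotienting by $\gamma^{n-1}(Q)$ rather than $\gamma^{1}(Q)$, because $\gamma^{n-1}(Q)$ is the unique term that is simultaneously abelian (making its blocks controllable) and has a quotient of solvability length $n-1$ (so the bound telescopes cleanly to $nk$); and invoking the fact that an abelian congruence has abelian relative transvection group, which is exactly what reduces each block to the medial case and hence to Corollary~\ref{equivalences for medial}. Everything else — closure of $\mathcal{LR}_k$ under homomorphic images and subquandles, and the additivity of $\dOC$ under extensions — is already in hand.
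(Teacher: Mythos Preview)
Your proof is correct and follows essentially the same route as the paper's: induct on the solvability length, quotient by the last nonzero term $\gamma^{n-1}(Q)$ of the derived series so that the quotient falls under the inductive hypothesis while the blocks are controlled by the abelian case, and then invoke Lemma~\ref{extenstionssss}. The only cosmetic difference is that the paper asserts the blocks $[a]_\beta$ are \emph{abelian} quandles (hence in $\dOC_k$) without further comment, whereas you argue the slightly weaker but equally sufficient fact that they are \emph{medial} by passing through the abelianness of $\dis_\beta$; both feed into Corollary~\ref{equivalences for medial} in the same way.
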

\begin{proof}
	The claim is true for abelian quandles.  Assume, by induction, that the claim is true for quandles that are solvable of length $n$. Let $Q$ be $k$-locally reductive and solvable of length $n+1$ and $\alpha=\gamma_n(Q)$ be the $n$-th element of the derived series of $Q$. The quotient $Q^\prime=Q/\alpha$ is $k$-locally reductive and $n$-solvable, so $Q^\prime\in  \dOC_{nk}$. All of the subquandles $[a]_\alpha$ are abelian and $k$-locally reductive.  Therefore, they are in $\dOC_{k}$ and so $Q\in \dOC_{nk+k} \subset\dOC_{\omega}$ by Lemma \ref{extenstionssss}.
\end{proof}
Propositions \ref{reductivity and Engel prop} and ~\ref{solvable doc iff R} 
imply the following statement.
\begin{corollary}
	Let $G$ be a solvable group. Then ${\rm Conj}(G)\in \dOC_{\omega}$ if and only if $G$ is an Engel group.
\end{corollary}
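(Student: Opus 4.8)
The plan is to combine the two cited results with the characterization of locally reductive conjugation quandles via Engel conditions. First I would recall that, by Proposition~\ref{reductivity and Engel prop}, $\mathrm{Conj}(G) \in \mathcal{LR}_{\omega}$ precisely when $G$ is an $n$-Engel group for some $n$, i.e., when $G$ is an Engel group in the bounded sense. So the content of the corollary is the equivalence, for a solvable group $G$, between ``$\mathrm{Conj}(G) \in \dOC_{\omega}$'' and ``$G$ is (bounded) Engel.''

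Next I would invoke Proposition~\ref{solvable doc iff R}: if $Q$ is a solvable quandle, then $Q \in \dOC_{\omega}$ iff $Q \in \mathcal{LR}_{\omega}$. To apply this, I need $Q = \mathrm{Conj}(G)$ to be a solvable quandle whenever $G$ is a solvable group; this follows from Theorem~\ref{solvability_for_quandles}, since $\dis(\mathrm{Conj}(G))$ is a quotient of a subgroup of $G$ (indeed $\lmlt(\mathrm{Conj}(G)) = G/Z(G)$, and $\dis$ is a subgroup of that), hence solvable. Then the chain of equivalences reads: $\mathrm{Conj}(G) \in \dOC_{\omega}$ $\iff$ $\mathrm{Conj}(G) \in \mathcal{LR}_{\omega}$ (by Proposition~\ref{solvable doc iff R}, using solvability of $\mathrm{Conj}(G)$) $\iff$ $G$ is an $n$-Engel group for some $n$ (by Proposition~\ref{reductivity and Engel prop}) $\iff$ $G$ is an Engel group. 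The only subtlety is the meaning of ``Engel group'': the paper's Proposition~\ref{reductivity and Engel prop} gives bounded $n$-Engel, so I would either phrase the corollary with that understanding or note that for solvable groups, the two notions may differ in general — but the statement as intended is the bounded one, matching $\mathcal{LR}_{\omega} = \bigcup_n \mathcal{LR}_n$.

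The main obstacle, such as it is, is purely bookkeeping: making sure solvability transfers correctly from the group to the conjugation quandle so that Proposition~\ref{solvable doc iff R} is applicable. This is immediate from Theorem~\ref{solvability_for_quandles} together with the identification of the transvection group as a section of $G$, so I do not expect any real difficulty; the proof will be two or three lines citing Propositions~\ref{reductivity and Engel prop} and~\ref{solvable doc iff R} and Theorem~\ref{solvability_for_quandles} in sequence.
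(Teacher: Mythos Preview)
Your proposal is correct and matches the paper's approach exactly: the paper simply states that the corollary follows from Propositions~\ref{reductivity and Engel prop} and~\ref{solvable doc iff R}, and your write-up spells out the one implicit step (that $\mathrm{Conj}(G)$ is a solvable quandle via Theorem~\ref{solvability_for_quandles}) which the paper leaves to the reader. Your remark that ``Engel'' here must be read as ``$n$-Engel for some $n$'' is also on point, since that is precisely what $\mathcal{LR}_\omega = \bigcup_n \mathcal{LR}_n$ delivers.
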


Finally, we characterize the conjugation quandles in the class $\dOC_2$. 
\begin{proposition}\label{2-engel elements -> doc_2}
	Let $G$ be a group and $H\subseteq G$ be a subset of $G$ closed under conjugation. Then ${\rm Conj}(H)\in\dOC_2$ if and only if the elements of $H$ are $2$-Engel elements of $\langle H\rangle$.
\end{proposition}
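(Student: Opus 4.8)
The plan is to unwind the definition of $\dOC_2$ for a conjugation quandle ${\rm Conj}(H)$ and connect it to the $2$-Engel condition in $\langle H\rangle$, in the same spirit as Proposition~\ref{reductivity and Engel prop} but keeping track of orbits rather than iterated local reductivity. First I would recall that, since ${\rm Conj}(H)\in\dOC_2$ means every orbit series stabilizes on $T_1$ by step $2$, the condition is equivalent to the assertion that every orbit of every orbit ${\rm Orb}(a,{\rm Conj}(H))$ is trivial, i.e.\ that each first-level orbit $P_a={\rm Orb}(a,{\rm Conj}(H))$ is itself a trivial quandle (equivalently, $P_a\in\dOC_1=\mathcal{R}_1$). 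Now for $a\in H$, the orbit $P_a$ is exactly the conjugacy class of $a$ in $\langle H\rangle$, carrying the conjugation operation; a conjugacy class is a trivial quandle precisely when any two of its elements commute, i.e.\ when $a^{-1}x^{-1}ax$ centralizes $a$ for all $x\in\langle H\rangle$, which is exactly $[x,a]$ commuting with $a$, i.e.\ $[[x,a],a]=1$ for all $x\in\langle H\rangle$. That last statement is precisely that $a$ is a $2$-Engel element of $\langle H\rangle$.

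The key steps, in order, are: (1) Reduce $\dOC_2$ to ``every first-level orbit is trivial'' using Proposition~\ref{intuition}(iv) together with the orbit-tree description, noting that for a conjugation quandle on $H$ the orbits are the conjugacy classes in $\langle H\rangle$ (here I use that $H$ is closed under conjugation so these classes lie in $H$ and the subquandle structure is the conjugation operation). (2) Prove the elementary group-theoretic lemma: a conjugacy class $a^{\langle H\rangle}$, as a subquandle of ${\rm Conj}(\langle H\rangle)$, is trivial if and only if $a$ is a $2$-Engel element of $\langle H\rangle$. The forward direction: triviality says $y\rhd z=z$ for all $y,z$ in the class, in particular $(x^{-1}ax)\rhd a=a$ for all $x\in\langle H\rangle$, which rearranges to $[a,[a,x]]=1$, i.e.\ $[x,{}_2 a]=1$ after adjusting the order of the commutators. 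The converse runs the same computation backwards, observing that if $a$ is $2$-Engel then for \emph{any} two conjugates $x^{-1}ax$ and $y^{-1}ay$ we get commutation by conjugating the relation $[a,[a,z]]=1$ by $y$. (3) Assemble: ${\rm Conj}(H)\in\dOC_2$ $\iff$ every orbit ${\rm Orb}(a,{\rm Conj}(H))$ is trivial for every $a\in H$ $\iff$ every $a\in H$ is a $2$-Engel element of $\langle H\rangle$.

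The main obstacle I anticipate is step (2), specifically getting the quantifiers and the noncommutator bookkeeping exactly right: ``trivial quandle'' requires $y\rhd z = z$ for \emph{all} pairs $y,z$ in the conjugacy class, not merely $x^{-1}ax \rhd a = a$, so I must verify that the single family of relations $\{[a,[a,x]]=1 : x\in\langle H\rangle\}$ is genuinely equivalent to the full triviality of the class. This works because every element of the class is $y^{-1}ay$ for some $y$, and $(y^{-1}ay)\rhd(x^{-1}ax)$ conjugates $a\rhd(xy^{-1}\,a\,yx^{-1})$ by $y$; since the inner expression is again of the form $a\rhd(z^{-1}az)$ with $z=yx^{-1}$, triviality on all such ``one-sided'' pairs already forces triviality everywhere. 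I would also need to be slightly careful that the iterated left-commutator notation $[b,{}_k a]$ from the paper (which nests on the \emph{second} slot) matches what comes out of the conjugation computation; a short reindexing remark handles this. Everything else — the orbit-tree reduction and the final assembly — is routine given Proposition~\ref{intuition} and the description of orbits in conjugation quandles.
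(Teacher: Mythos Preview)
Your proposal is correct and follows essentially the same route as the paper. Both arguments identify the first-level orbit of $a$ with its $\langle H\rangle$-conjugacy class, then observe that $\dOC_2$ amounts to triviality of each such class as a subquandle, and finally compute that $(h^{-1}ah)\rhd a = a$ is equivalent to $[h,{}_2 a]=1$; the paper does this last step via the one-line identity $(h^{-1}ah)^{-1}a(h^{-1}ah)=[h,{}_2 a]\,a$, whereas you spell out the same computation and additionally justify (via conjugation by $y$) why the ``one-sided'' relations $(h^{-1}ah)\rhd a=a$ already force full triviality of the orbit---a point the paper leaves implicit.
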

\begin{proof}
If $x \in H$, then the orbit of $x$ in $H$ is the set $\setof{h^{-1} x h}{h\in \langle H\rangle}$. So, we have
	\begin{displaymath}
	(h^{-1}xh)^{-1}x(h^{-1}xh)=h^{-1} x^{-1} h x h^{-1} x h=[h,_2 x]x.
	\end{displaymath}
Thus, ${\rm Conj}(H) \in   \dOC_2$ if and only if $[h,_2 x]=1$ 
for every $h\in  \langle H \rangle$. 
\end{proof}  
Proposition~\ref{2-engel elements -> doc_2} has the following corollary.
\begin{corollary}
	Let $G$ be a group and $Q={\rm Conj}(G)$. Then $Q\in \dOC_2$ if and only if $G$ is a $2$-Engel group. Moreover, in this case $Q\in \mathcal{R}_3$.
\end{corollary}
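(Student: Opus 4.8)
The plan is to obtain both assertions as immediate consequences of Proposition~\ref{2-engel elements -> doc_2}, together with a classical structural fact about $2$-Engel groups.

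For the equivalence, I would apply Proposition~\ref{2-engel elements -> doc_2} with the subset $H$ taken to be all of $G$. Since $G$ is trivially closed under conjugation and $\langle H\rangle=\langle G\rangle=G$, that proposition says that $Q={\rm Conj}(G)$ belongs to $\dOC_2$ if and only if every element of $G$ is a $2$-Engel element of $G$. By the definition of an $n$-Engel group recalled just before Proposition~\ref{reductivity and Engel prop}, this holds precisely when $G$ is a $2$-Engel group, which is the stated equivalence.

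For the ``moreover'' clause, assume $G$ is $2$-Engel. The key input here is the classical theorem of Levi that every $2$-Engel group is nilpotent of class at most $3$. Thus $G$ is nilpotent of some class $c\le 3$. If $c\le 1$, then $G$ is abelian, $Q$ is the trivial quandle, and $Q\in\mathcal{R}_1\subseteq\mathcal{R}_3$; otherwise $c\in\{2,3\}$ and Corollary~\ref{red iff nilpotent} gives $Q\in\mathcal{R}_c\subseteq\mathcal{R}_3$, the last inclusion being part of~\eqref{MRinclusions}. In either case $Q\in\mathcal{R}_3$.

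As for difficulty, essentially everything is a direct unwinding of definitions and previously established results; the only non-elementary ingredient is Levi's theorem on $2$-Engel groups, which I would quote from the literature rather than reprove. The one point requiring a little care is the degenerate small-class case (the abelian case, where ${\rm Conj}(G)$ collapses to $T_1$), so that the appeal to Corollary~\ref{red iff nilpotent} is made only in the range where that statement applies.
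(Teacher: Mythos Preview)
Your proof is correct and follows essentially the same route as the paper: the equivalence via Proposition~\ref{2-engel elements -> doc_2} with $H=G$, and the ``moreover'' via Levi's theorem together with the nilpotency criterion for reductivity. The only cosmetic difference is that the paper invokes Proposition~\ref{strong red and inn} directly (using $\lmlt({\rm Conj}(G))=G/Z(G)$, which drops the nilpotency class by one) rather than going through Corollary~\ref{red iff nilpotent}; since the latter is an immediate consequence of the former via that same equality, this is not a genuinely different argument, and your separate handling of the abelian case, while harmless, is not actually needed.
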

\begin{proof} The fact that $Q\in \dOC_2$ if and only if $G$ is a $2$-Engel group follows from Proposition~\ref{2-engel elements -> doc_2}. Every $2$-Engel group is nilpotent of length at most three \cite{levi}. Therefore, $\lmlt(Q)=G/Z(G)$ is nilpotent of length at most $2$ and so, according to Proposition \ref{strong red and inn}, we have $Q\in \mathcal{R}_3$.
\end{proof}

Note that this corollary gives us a bound on the  length of reductivity for a quandle whose orbits of orbits are trivial, at least in the case of conjugation quandles. This observation naturally leads to the following unresolved questions, with which we conclude the paper.

In general, by Corollary \ref{equivalences_for_finite}, if a finite quandle $Q$ is $k$-locally reductive, then it is in $\dOC_j$ for some $j$. Also, if it is in $\dOC_j,$ then it is $i$-reductive for some $i$.
\begin{question} Is it possible to give a bound on $j$ in terms of $k$? On $i$ in terms of $j$?
\end{question}

Looking back at Example \ref{example1}, we see that when $k=2$, then $j$ can be at least $3$; and when $j=3$, $i$ can be at least $5$. Beyond this, little appears to be known.

\end{document}